\numberwithin{equation}{section}
\newtheorem{theorem}{Theorem}[section]
\newtheorem{lemma}[theorem]{Lemma}
\newtheorem{definition}[theorem]{Definition}
\newtheorem{corollary}[theorem]{Corollary}
\newtheorem{proposition}[theorem]{Proposition}
\newtheorem{question}{Question}
\newtheorem{remark}[theorem]{Remark}
\newtheorem{assumption}{Assumption}
\newtheorem*{centertext}{}
\title[The Calder\'on problem for nonlocal wave equations]{Optimal Runge approximation for nonlocal wave equations and unique determination of polyhomogeneous nonlinearities}
\author[Y.-H.~Lin]{Yi-Hsuan Lin}
\address{Department of Applied Mathematics, National Yang Ming Chiao Tung University, Hsinchu, Taiwan \& Fakult\"at f\"ur Mathematik, University of Duisburg-Essen, Essen, Germany}
\email{yihsuanlin3@gmail.com}
\author[T.~Tyni]{Teemu Tyni}
\address{Research Unit of Applied and Computational Mathematics, University of Oulu, Finland}
\email{teemu.tyni@oulu.fi}
\author[P. Zimmermann]{Philipp Zimmermann}
\address{Departament de Matem\`atiques i Inform\`atica, Universitat de Barcelona, Barcelona, Spain}
\email{philipp.zimmermann@ub.edu}
\keywords{Fractional Laplacian, wave equations, nonlinear PDEs, inverse problems, Runge approximation, very weak solutions, polyhomogeneous.}
\subjclass[2020]{Primary 35R30; secondary 26A33, 42B37.}
\newcommand{\R}{{\mathbb R}}
\newcommand{\N}{{\mathbb N}}
\newcommand{\eps}{\varepsilon}
\newcommand {\p} {\partial}
\newcommand{\LC}{\left(}
\newcommand{\RC}{\right)}
\newcommand{\wt}{\widetilde}
\newcommand{\schwartz}{\mathscr{S}}
\newcommand{\tempered}{\mathscr{S}^{\prime}}
\newcommand{\fourier}{\mathcal{F}}
\newcommand{\ifourier}{\mathcal{F}^{-1}}
\newcommand{\distr}{\mathscr{D}^{\prime}}
\newcommand{\norm}[1]{\lVert #1 \rVert}
\newcommand{\abs}[1]{\left\lvert #1 \right\rvert}
\newcommand{\weak}{\rightharpoonup}
\newcommand{\weakstar}{\overset{\ast}{\rightharpoonup}}
\begin{document}

	\maketitle
	\begin{abstract}
		The main purpose of this article is to establish the Runge-type approximation in $L^2(0,T;\widetilde{H}^s(\Omega))$ for solutions of linear nonlocal wave equations. To achieve this, we extend the theory of very weak solutions for classical wave equations to our nonlocal framework. This strengthened Runge approximation property allows us to extend the existing uniqueness results for Calder\'on problems of linear and nonlinear nonlocal wave equations in our earlier works. Furthermore, we prove unique determination results for the Calder\'on problem of nonlocal wave equations with polyhomogeneous nonlinearities.
	\end{abstract}

	\tableofcontents

	\section{Introduction}
	\label{sec: introduction}
	
	In recent years, inverse problems for nonlocal partial differential equations (PDEs) have been extensively studied in the literature. The first work in this field is \cite{GSU20}, in which the authors considered the so-called \emph{Calder\'on problem} for the \emph{fractional Schr\"odinger equation}
	\begin{equation}
		\label{eq: fractional schroedinger}
		((-\Delta)^s+q)u=0\text{ in }\Omega,
	\end{equation}
	where $\Omega\subset \R^n$ is a bounded domain. Here $(-\Delta)^s$ denotes the \emph{fractional Laplacian} for $0<s<1$, and $q\in L^{\infty}(\Omega)$ is a bounded potential. In this problem, one asks whether it is possible to uniquely recover the potential $q$ from the \emph{Dirichlet-to-Neumann (DN) map} 
	\begin{equation}
		\label{eq: DN map schroedinger}
		\Lambda_q f\vcentcolon = (-\Delta)^s u_f\big|_{\Omega_e},
	\end{equation}
	where $\Omega_e=\R^n\setminus\overline{\Omega}$ denotes the exterior of $\Omega$, $f\colon\Omega_e\to\R$ is a given Dirichlet data, and $u_f\colon\R^n\to\R$ is the unique solution of \eqref{eq: fractional schroedinger} with $\left. u_f\right|_{\Omega_e}=f$. In \cite{GSU20}, the authors found that the fractional Laplacian satisfies the \emph{unique continuation principle (UCP)} that asserts:
	
	\begin{centertext}[\textbf{UCP}]
		Let $s\in\R_+\setminus\N$, $t\in\R$ and $V\subset\R^n$ be an open set. If $u\in H^t(\R^n)$ satisfies 
		\[
		u=(-\Delta)^s u=0\text{ in }V \text{ then } u\equiv 0 \text{ in } \R^n.
		\] 
	\end{centertext}
	In \cite{GSU20} the UCP has been shown for the range $0<s<1$ and via iteration by the Laplacian the UCP extends to the range $s\in \R_+\setminus\N$.
	Furthermore, they observe that the fractional Laplacian has, as a consequence of a duality argument (Hahn--Banach theorem) and the UCP, the so-called \emph{Runge approximation property}. For any open set $W\subset \Omega_e$, this property can be phrased in two alternative ways:
	\begin{enumerate}[(i)]
		\item\label{L2 approx} The \emph{Runge set}
		\[
		\mathcal{R}_W\vcentcolon =\big\{ u_{f}\big|_{\Omega} \,;\, f\in C^\infty_c (W) \big\}
		\]
		is dense in $L^2(\Omega)$, where $u_f$ is the unique solution to \eqref{eq: DN map schroedinger} with exterior value $f\in C_c^{\infty}(W)$ (cf.~\cite{GSU20}).
		\item\label{Hs approx} The \emph{Runge set}
		\[
		\mathscr{R}_W\vcentcolon =\big\{ u_{f}-f \,;\, f\in C^\infty_c (W) \big\}
		\]
		is dense in $\widetilde{H}^s(\Omega)$ (cf.~\cite{RS17}).
	\end{enumerate}
	Together with a suitable integral identity, the result \ref{L2 approx} allowed the authors of \cite{GSU20} to uniquely recover bounded potentials, whereas the property \ref{Hs approx} was  used in \cite{RS17} to recover certain singular potentials.
	
	The above strategy to establish uniqueness for Calder\'on-type inverse problems of elliptic or parabolic nonlocal equations has lately been investigated in several research articles, such as \cite{GLX,cekic2020calderon,CLL2017simultaneously,LL2020inverse,LL2022inverse, LZ2023unique,KLZ-2022,KLW2022,LRZ2022calder,Semilinear-nonlocal-wave-paper,LLU2023calder,CGRU2023reduction,LLU2023calder,RZ-unbounded,RZ2022LowReg,CRTZ-2022,LZ2024uniqueness,feizmohammadi2021fractional,feizmohammadi2021fractional_closed,FKU24}. Some articles of this list consider the detection of linear perturbations as in the above fractional Schr\"odinger equation \eqref{eq: fractional schroedinger}, while others allowed nonlinear perturbations, or even studied the identification of leading order coefficients in the main nonlocal term in the considered PDE. We also point out a recent monograph \cite{LL_nonlocal_IP} for a comprehensive introduction to this research field.

	\subsection{The mathematical model and main results}
	
	In this article, we study Calder\'on-type inverse problems for linear and nonlinear \emph{nonlocal wave equations (NWEQs)} formulated generically as
	\begin{equation}
		\label{eq: main}
		\begin{cases}
			\partial_t^2u +(-\Delta)^su+f(x,u)=0 &\text{ in }\Omega_T,\\
			u=\varphi &\text{ in }(\Omega_e)_T,\\
			u(0)= \partial_t u(0)=0  &\text{ in }\R^n,
		\end{cases}
	\end{equation}
	where $f\colon \Omega\times \R\to\R$ is a suitable nonlinearity. Here we use the notation
	\begin{equation}
		A_t \vcentcolon = A\times (0,t), \text{ for any }A\subset\R^n\text{ and }t>0.
	\end{equation} 
	Let us note that nonlocal wave equations such as \eqref{eq: main} arise in a special case of \emph{peridynamics}  --- the theory of studying dynamics of materials with discontinuities such as fractures (see \cite{peridynamics}). Meanwhile, for the well-posedness of \eqref{eq: main}, we need the compatibility condition to match the initial value such that the exterior data satisfies $\varphi(x,0)=0$ in $\Omega_e$.
	
	By \cite[Theorem~3.1 and 3.6]{Semilinear-nonlocal-wave-paper} the equation \eqref{eq: main} is well-posed for regular exterior values $\varphi\colon\Omega_e\to\R$, when $f(x,\tau)=q(x)\tau$ with $q\in L^p(\Omega)$, where $1\leq p\leq\infty$ satisfies
	\begin{equation}
		\label{eq: restrictions on p}
		\begin{cases}
			\frac{n}{s}\leq p\leq \infty, &\, \text{if }\, 2s< n,\\
			2<p\leq \infty,  &\, \text{if }\, 2s= n,\\
			2\leq p\leq \infty, &\, \text{if }\, 2s\geq n,
		\end{cases}
	\end{equation}
	or $f$ satisfies the following assumption:
	\begin{assumption}\label{main assumptions on nonlinearities}
		We say that a Carath\'eodory function $f\colon \Omega\times \R\to\R$ is a \emph{weak nonlinearity} if it satisfies the following conditions:
		\begin{enumerate}[(i)]
			\item\label{prop f} $f$ has partial derivative $\partial_{\tau}f$, which is a Carath\'eodory function, and there exists $a\in L^p(\Omega)$ such that
			\begin{equation}
				\label{eq: bound on derivative}
				\left|\partial_\tau f(x,\tau)\right|\lesssim a(x)+|\tau|^r
			\end{equation}
			for all $\tau\in\R$ and a.e. $x\in\Omega$. Here the exponents $p$ and $r$ satisfy the restrictions \eqref{eq: restrictions on p}
			and
			\begin{equation}
				\label{eq: cond on r}
				\begin{cases}
					0\leq r<\infty, &\, \text{if }\, 2s\geq n,\\
					0\leq r\leq \frac{2s}{n-2s}, &\, \text{if }\, 2s< n,
				\end{cases}
			\end{equation}
			respectively. Moreover, $f$ fulfills the integrability condition $f(\cdot,0)\in L^2(\Omega)$.
			\item\label{prop F} There is a constant $C_1>0$ such that the function $F\colon \Omega\times\R\to\R$ defined via
			\[
			F(x,\tau)=\int_0^\tau f(x,\rho)\,d\rho
			\]
			satisfies $F(x,\tau)\geq -C_1$ for all $\tau\in\R$ and $x\in\Omega$.
		\end{enumerate}
	\end{assumption}
	Observe that given a function $0\leq q\in L^\infty(\Omega)$, an example of a nonlinearity $f$, which satisfies the conditions in Assumption~\ref{main assumptions on nonlinearities}, is a fractional power type nonlinearity $f(x,\tau)=q(x)|\tau|^r\tau$ for $r\geq 0$ with $r$ satisfying \eqref{eq: cond on r}. We refer readers to \cite[Section 3]{Semilinear-nonlocal-wave-paper} for more details.
	
	Assuming the well-posedness of \eqref{eq: main} for suitable nonlinearities $f$, as a generalization of the Calder\'on problem for the fractional Schr\"odinger equation, we aim to determine the nonlinearity $f(x,\tau)$ from the DN map $\Lambda_f$ related to \eqref{eq: main}, which can be formally defined by 
	\begin{equation}
		\Lambda_{f}\varphi\vcentcolon  = (-\Delta)^s u_\varphi  \big|_{(\Omega_e)_T},
	\end{equation}
	where $u_\varphi\colon\R^n_T\to\R$ denotes the unique solution to \eqref{eq: main} (cf.~eq.~\eqref{eq: DN map schroedinger}). Next, let us make a few remarks on this inverse problem.
	\begin{enumerate}[(a)]
		\item\label{linear case intro} \textit{Linear perturbations:} In \cite{KLW2022}, the uniqueness of this inverse problem has been established in the case $f(x,u)=q(x)u$ with $q\in L^{\infty}(\Omega)$. In \cite[Corollary~1.4]{Semilinear-nonlocal-wave-paper}, it has been shown that uniqueness still holds if $0\leq q\in L^p(\Omega)$ with $p$ satisfying \eqref{eq: restrictions on p}.
		\item\label{seminlinear case intro} \textit{Semilinear perturbations:} In \cite[Theorem~1.1]{Semilinear-nonlocal-wave-paper}, we showed that uniqueness holds even when
		\begin{enumerate}
			\item[(A)]\label{assump 1 intro} $f$ is a weak nonlinearity (see~Assumption~\ref{main assumptions on nonlinearities}) with $F,r$ satisfying 
			\begin{enumerate}
				\item[(A1)]\label{assump 2 intro} $F\geq 0$,
				\item[(A2)]\label{assump 3 intro} $0<r\leq 1$
			\end{enumerate}
			\item[(B)]\label{assump 4 intro} and $f$ is $(r+1)-$homogeneous.
		\end{enumerate}
	\end{enumerate}
	All of these results rely on a $L^2(\Omega_T)$ Runge approximation property of linear nonlocal wave equations (cf.~\ref{L2 approx}):
	\begin{proposition}[{Runge approximation, \cite[Proposition~4.1]{Semilinear-nonlocal-wave-paper}}]
		\label{prop: runge old paper}
		Let $\Omega\subset \R^n$ be a bounded Lipschitz domain, $W\subset\Omega_e$ an arbitrary open set, $s>0$ a non-integer and $T>0$. Suppose that $q\in L^p(\Omega)$ is nonnegative\footnote{This assumption is included for simplicity and the result remains true, for example, if one assumes instead $q\in L^{\infty}(\Omega)$.}, where $p$ is given by \eqref{eq: restrictions on p}.
		Consider the \emph{Runge set} 
		\begin{align*}
			\mathcal{R}_W\vcentcolon =\big\{ u_{\varphi}\big|_{\Omega_T} \,;\, \varphi\in C^\infty_c (W_T) \big\},
		\end{align*}
		where $u_\varphi\in C([0,T];H^s(\R^n))\cap C^1([0,T]; L^2(\R^n))$ is the unique solution to 
		\begin{equation}\label{eq:runge with zero initial}
			\begin{cases}
				\partial_t^2u +(-\Delta)^su+qu = 0 &\text{ in }\Omega_T,\\
				u=\varphi  &\text{ in }(\Omega_e)_T,\\
				u(0)= \partial_t u(0)=0 &\text{ in }\Omega.
			\end{cases}
		\end{equation}
		Then $\mathcal{R}_W$ is dense in $L^2(\Omega_T)$.
	\end{proposition}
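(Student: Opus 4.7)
The plan is to prove this via the classical Hahn--Banach duality approach combined with the UCP for the fractional Laplacian. By Hahn--Banach, it suffices to show that every $F\in L^2(\Omega_T)$ which annihilates $\mathcal{R}_W$, i.e.\ satisfies $\int_0^T\int_\Omega F u_\varphi\,dx\,dt=0$ for all $\varphi\in C_c^\infty(W_T)$, must vanish identically.

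To exploit such an $F$, I would introduce the backward adjoint problem: let $v$ be the unique solution to
\begin{equation}
\begin{cases}
\partial_t^2 v+(-\Delta)^s v+qv=F &\text{in }\Omega_T,\\
v=0 &\text{in }(\Omega_e)_T,\\
v(T)=\partial_t v(T)=0 &\text{in }\Omega.
\end{cases}
\end{equation}
This backward problem is well-posed in the same energy class as the forward one by time reversal $t\mapsto T-t$, once we invoke the well-posedness established in \cite[Theorem~3.1]{Semilinear-nonlocal-wave-paper}. Next, I would pair the equation for $u_\varphi$ against $v$ and the equation for $v$ against $u_\varphi$, integrate over $\Omega_T$, and subtract. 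The second time derivatives produce a boundary term $\bigl[\langle\partial_t u_\varphi,v\rangle-\langle u_\varphi,\partial_t v\rangle\bigr]_0^T$, which vanishes thanks to the matching initial conditions on $u_\varphi$ at $t=0$ and terminal conditions on $v$ at $t=T$. The $q u_\varphi v$ terms cancel. For the nonlocal term, the bilinear form of $(-\Delta)^s$ on $\R^n$ is symmetric, and since $v$ is supported in $\overline{\Omega}$ while $u_\varphi=\varphi$ in $(\Omega_e)_T$, the nonsymmetric contribution on $\Omega$ yields a boundary-type term $-\int_0^T\int_{\Omega_e}\varphi\,(-\Delta)^s v\,dx\,dt$. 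Altogether, this gives the duality identity
\begin{equation}
\int_0^T\int_\Omega Fu_\varphi\,dx\,dt=\int_0^T\int_{\Omega_e}\varphi\,(-\Delta)^s v\,dx\,dt.
\end{equation}

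Taking arbitrary $\varphi\in C_c^\infty(W_T)$ and using the annihilation hypothesis, I conclude $(-\Delta)^s v=0$ in $W_T$. Since also $v=0$ in $(\Omega_e)_T\supset W_T$, applying the UCP for $(-\Delta)^s$ slice by slice in $t$ yields $v(\cdot,t)\equiv 0$ on $\R^n$ for a.e.\ $t\in(0,T)$. Inserting $v\equiv 0$ into its own equation, we read off $F=0$ in $\Omega_T$, which is the desired conclusion.

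The principal technical obstacle is the rigorous justification of the duality identity: since $F$ is only $L^2$ in space-time, the corresponding solution $v$ sits in a rather low-regularity class, and the pairings with $(-\Delta)^s u_\varphi$ and $\partial_t^2 u_\varphi$ must be interpreted in the appropriate dual senses. The natural remedy is to approximate $F$ (and hence $v$) by smoother data where integration by parts is classical, establish the identity there, and pass to the limit using the well-posedness estimates. A second delicate point is the slicewise application of the UCP: one needs $v(\cdot,t)\in H^t(\R^n)$ for some $t\in\R$ together with $(-\Delta)^s v(\cdot,t)=0$ in $W$ and $v(\cdot,t)=0$ in $W$, both for a.e.\ $t$; this follows once one invokes the Fubini-type selection of good time slices in the distributional identity for $(-\Delta)^s v$, which is legitimate because the fractional Laplacian acts only in the spatial variables.
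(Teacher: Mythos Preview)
Your proposal is correct and follows exactly the standard Hahn--Banach/adjoint-problem/UCP strategy that the cited paper \cite{Semilinear-nonlocal-wave-paper} uses for this proposition, and that the present paper mirrors in its proof of the stronger Theorem~\ref{thm: runge}. The two technical points you flag (justifying the integration by parts in time between the two weak solutions, and the slicewise application of the UCP) are precisely the ones the paper handles---the former via the parabolic regularization argument referenced as \cite[Claim~4.2]{Semilinear-nonlocal-wave-paper} (see the proof of Proposition~\ref{prop: weak sol are very weak sol} here), and the latter by the usual Fubini-type selection of good time slices.
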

	
	The main goal of this article is to answer the question raised in \cite[p.~3]{zimmermann2024calderon}:
	\begin{question}
		Let us adopt all the notation of Proposition~\ref{prop: runge old paper}. Is the Runge set 
		\begin{equation}
			\mathscr{R}_W\vcentcolon =\left\{ u_{\varphi}-\varphi \,;\, \varphi\in C^\infty_c (W_T) \right\}
		\end{equation}
		dense in $L^2(0,T;\widetilde{H}^s(\Omega))$?
	\end{question}
	An affirmative answer to this question will be established by extending the theory of very weak solutions to linear nonlocal wave equations (Section~\ref{sec: very weak sol.}) and it forms the core contribution of our article as it provides an essential tool for studying the subsequent inverse problems. It reads as follows:
	
	\begin{theorem}[Optimal Runge approximation]\label{thm: runge}
		Let $\Omega\subset \R^n$ be a bounded Lipschitz domain, $W\subset\Omega_e$ an arbitrary open set, $s>0$ a non-integer, $T>0$ and $q\in L^p(\Omega)$ with $p$ satisfying the restrictions \eqref{eq: restrictions on p}. Consider the \emph{Runge set} 
		\begin{align*}
			\mathscr{R}_W\vcentcolon =\left\{ u_\varphi-\varphi \,;\, \varphi\in C^\infty_c (W_T) \right\}\subset L^2(0,T;\widetilde{H}^s(\Omega)),
		\end{align*}
		where $u_\varphi\in C([0,T];H^s(\R^n))\cap C^1([0,T]; L^2(\R^n))$ is the unique solution to \eqref{eq:runge with zero initial}. Then $\mathscr{R}_W$ is dense in $L^2(0,T;\widetilde{H}^s(\Omega))$.
	\end{theorem}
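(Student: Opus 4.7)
The plan is to follow the classical Hahn--Banach-plus-UCP scheme familiar from \cite{GSU20,RS17}, with the adjoint problem handled as a \emph{very weak solution} in the sense of Section~\ref{sec: very weak sol.}; this is essential because the natural source appearing on the dual side sits only in $H^{-s}(\Omega)$, strictly below the energy threshold at which Proposition~\ref{prop: runge old paper} operates.

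First I would use $L^2(0,T;\widetilde{H}^s(\Omega))^{\ast}\cong L^2(0,T;H^{-s}(\Omega))$ together with Hahn--Banach to reduce the statement to showing: if $F\in L^2(0,T;H^{-s}(\Omega))$ annihilates $\mathscr{R}_W$, i.e.
\[
\int_0^T\langle F(t),u_\varphi(t)-\varphi(t)\rangle_{H^{-s}(\Omega),\widetilde{H}^s(\Omega)}\,dt=0\quad\text{for every }\varphi\in C_c^\infty(W_T),
\]
then $F\equiv 0$. Given such $F$, I would invoke the very weak solution theory of Section~\ref{sec: very weak sol.} to produce the unique solution $v$ of the backward adjoint problem
\[
\begin{cases}
\partial_t^2 v+(-\Delta)^s v+qv=F &\text{in }\Omega_T,\\
v=0 &\text{in }(\Omega_e)_T,\\
v(T)=\partial_t v(T)=0 &\text{in }\Omega,
\end{cases}
\]
which by time reversal is a forward problem with $H^{-s}(\Omega)$-valued source.

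Next I would derive the transposition identity. Formally testing this adjoint equation against $w=u_\varphi-\varphi\in L^2(0,T;\widetilde{H}^s(\Omega))$, integrating by parts twice in time (using the compatible initial/terminal conditions) and in space (by symmetry of the bilinear form associated with $(-\Delta)^s$), and then using that $u_\varphi$ solves \eqref{eq:runge with zero initial} while $v$ vanishes in $(\Omega_e)_T$, I would obtain
\[
0=\int_0^T\langle F,u_\varphi-\varphi\rangle\,dt=-\int_0^T\!\!\int_{\R^n} v\,(-\Delta)^s\varphi\,dx\,dt=-\int_0^T\!\!\int_W (-\Delta)^s v\cdot\varphi\,dx\,dt,
\]
since $\partial_t^2\varphi$ is supported where $v=0$, while $q\varphi\equiv 0$ because $q$ lives on $\Omega$ and $\varphi$ on $W\subset\Omega_e$. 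At very weak regularity this chain is of course not a genuine integration by parts but rather the defining transposition identity characterizing $v$, evaluated against the smooth datum $\varphi$.

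Varying $\varphi\in C_c^\infty(W_T)$ then forces $(-\Delta)^s v=0$ distributionally in $W_T$, and combined with the exterior condition $v=0$ in $(\Omega_e)_T\supset W_T$ the UCP applied at a.e.\ time slice yields $v\equiv 0$ on $\R^n\times(0,T)$; reading off the adjoint PDE immediately gives $F=0$. The hard part will be Section~\ref{sec: very weak sol.} itself: setting up the very weak framework precisely so that (a) existence and uniqueness of $v$ with $H^{-s}(\Omega)$-valued source are available, (b) the transposition identity above is a bona fide consequence of the definition of $v$ rather than a formal manipulation, and (c) $v(\cdot,t)$ possesses enough spatial regularity for a.e.\ $t$ for the slice-wise UCP to apply. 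Once these three points are in place the argument above closes mechanically.
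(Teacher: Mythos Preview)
Your approach is essentially identical to the paper's: Hahn--Banach reduction, solve the backward problem with source $F\in L^2(0,T;H^{-s}(\Omega))$ as a very weak solution, use the defining transposition identity with the weak solution $u_\varphi-\varphi$ (whose source $-\chi_\Omega(-\Delta)^s\varphi$ lies in $L^2(0,T;\widetilde L^2(\Omega))$, so it is an admissible test function in Definition~\ref{def: very weak sol}), then UCP slice-wise.

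One point is glossed over. Your final step ``reading off the adjoint PDE immediately gives $F=0$'' is not immediate: once you know the very weak solution $v\equiv 0$, the equation $\partial_t^2v+(-\Delta)^sv+qv=F$ is, a priori, only satisfied in the transposition sense of Definition~\ref{def: very weak sol}, i.e.\ tested against weak solutions of the dual problem with $L^2$ sources. Saying ``$0=F$'' from this would require those weak solutions to be dense in $L^2(0,T;\widetilde H^s(\Omega))$, which is precisely the theorem you are proving. The paper closes this loop by first proving separately (Proposition~\ref{prop: very weak sol are distribution}) that every very weak solution is a distributional solution; then $v\equiv 0$ in the distributional identity \eqref{eq: distributional solution} yields $\int_0^T\langle F,\Phi\rangle\,dt=0$ for all $\Phi\in C_c^\infty([0,T)\times\Omega)$, and density of $C_c^\infty(\Omega_T)$ in $L^2(0,T;\widetilde H^s(\Omega))$ finishes. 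You should add this ingredient to your list (a)--(c).
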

    We call this result optimal Runge approximation as the approximation space coincides with the natural space of solutions in our problem, similarly to the elliptic case, see~\ref{Hs approx} on page 2.
    
	With this Runge approximation and a suitable integral identity, similar to the one demonstrated in \cite{KLW2022} or \cite{zimmermann2024calderon}, we can recover $L^p$-regular linear perturbations, which are not necessarily nonnegative (cf.~\ref{linear case intro}).
	
	\begin{theorem}\label{thm: uniqueness potential}
		Let $\Omega\subset \R^n$ be a bounded Lipschitz domain, $W_1,W_2\subset\Omega_e$ be nonempty open sets, $s>0$ a non-integer, $T>0$ and $q_j\in L^p(\Omega)$ with $p$ satisfying the restrictions \eqref{eq: restrictions on p} for $j=1,2$. Let $\Lambda_{q_j}$ be the DN maps of
		\begin{equation}
			\begin{cases}
				\partial_t^2u +(-\Delta)^su + q_ju=0 &\text{ in }\Omega_T,\\
				u_j=\varphi  &\text{ in }(\Omega_e)_T,\\
				u_j(0)=\p_t u_j(0)=0 &\text{ in }\Omega,
			\end{cases}
		\end{equation}
		for $j=1,2$, satisfying
		\begin{equation}\label{same DN map in thm linear}
			\Lambda_{q_1}\varphi\big|_{(W_2)_T}=\Lambda_{q_2}\varphi\big|_{(W_2)_T},   \text{ for any }\varphi \in C^\infty_c((W_1)_T).
		\end{equation}
		Then there holds $q_1=q_2$ in $\Omega$.
	\end{theorem}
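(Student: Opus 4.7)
My plan is to use the classical duality approach for Calder\'on-type problems: first derive an integral identity rewriting the pairing of $\Lambda_{q_1}-\Lambda_{q_2}$ against admissible exterior data as a bilinear pairing of $q_1-q_2$ against products of two solutions; then use the new Runge approximation of Theorem~\ref{thm: runge} to let these products range densely over a sufficiently large class in $L^2(\Omega_T)$. The novelty compared with the strategy of \cite{KLW2022,zimmermann2024calderon} is precisely that Theorem~\ref{thm: runge} removes the sign restriction $q\geq 0$ imposed in Proposition~\ref{prop: runge old paper}, thereby allowing arbitrary $L^p$-potentials.

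To establish the integral identity, fix $\varphi_1\in C_c^\infty((W_1)_T)$ and $\varphi_2\in C_c^\infty((W_2)_T)$. Let $u_1$ be the forward solution of the $q_1$-problem with zero initial conditions and exterior value $\varphi_1$, and let $v_2$ be the \emph{backward} solution of the $q_2$-problem with terminal conditions $v_2(T)=\partial_t v_2(T)=0$ and exterior value $\varphi_2$; well-posedness of the latter follows from the forward theory via the substitution $t\mapsto T-t$. Testing the $u_1$-equation against $v_2$, integrating by parts twice in $t$ (the boundary terms at $t=0$ and $t=T$ vanish thanks to the complementary initial and terminal conditions), and using the self-adjointness of $(-\Delta)^s$, a standard computation yields
\begin{equation*}
  \int_{(\Omega_e)_T}\bigl(\Lambda_{q_1}\varphi_1-\Lambda_{q_2}\varphi_1\bigr)\varphi_2\,dx\,dt=\int_{\Omega_T}(q_2-q_1)\,u_1v_2\,dx\,dt.
\end{equation*}
Under hypothesis~\eqref{same DN map in thm linear}, the left-hand side vanishes for every admissible pair $(\varphi_1,\varphi_2)$.

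I would then apply Theorem~\ref{thm: runge} twice: once to $q_1$ with window $W_1$ to deduce that $u_1|_{\Omega_T}$ (equivalently $\{u_1-\varphi_1\}$, since $\varphi_1$ vanishes on $\Omega$) is dense in $L^2(0,T;\widetilde{H}^s(\Omega))$; and once to the time-reversed $q_2$-problem with window $W_2$ to obtain the analogous density for $v_2|_{\Omega_T}$. The Sobolev embedding $\widetilde{H}^s(\Omega)\hookrightarrow L^{2n/(n-2s)}(\Omega)$ when $2s<n$ (with the obvious modification otherwise), combined with the restrictions~\eqref{eq: restrictions on p} on $p$, makes the bilinear form $(G_1,G_2)\mapsto\int_{\Omega_T}(q_1-q_2)G_1G_2\,dx\,dt$ continuous on $L^2(0,T;\widetilde{H}^s(\Omega))\times L^2(0,T;\widetilde{H}^s(\Omega))$, so the identity of the previous paragraph passes to the limit and yields $\int_{\Omega_T}(q_1-q_2)G_1G_2\,dx\,dt=0$ for all such $G_1,G_2$. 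Specialising to separable choices $G_j(x,t)=\psi(t)g_j(x)$ with $g_j\in C_c^\infty(\Omega)$ and $\psi\in L^2(0,T)$ normalised so that $\int_0^T\psi^2\,dt=1$, this collapses to $\int_\Omega(q_1-q_2)g_1g_2\,dx=0$ for all $g_1,g_2\in C_c^\infty(\Omega)$; since every $h\in C_c^\infty(\Omega)$ can be written as $g_1g_2$ with $g_1\equiv 1$ on $\spt h$ and $g_2=h$, the identity $q_1=q_2$ in $\Omega$ follows.

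The main obstacle is the passage to the limit in the integral identity: the Runge approximants only converge in $L^2(0,T;\widetilde{H}^s(\Omega))$, whereas pairing against a general $L^p$-potential a priori requires stronger integrability of the product $u_1v_2$. The restrictions~\eqref{eq: restrictions on p} are calibrated exactly so that Sobolev embedding closes this gap, but the endpoint cases ($p=n/s$ when $2s<n$, respectively $p=2$ when $2s\geq n$) are tight and require a careful H\"older estimate that tracks both spatial and temporal integrability. This is precisely where the upgraded $L^2(0,T;\widetilde{H}^s(\Omega))$-Runge density of Theorem~\ref{thm: runge}, rather than the weaker $L^2(\Omega_T)$-density of Proposition~\ref{prop: runge old paper} (which moreover required $q\geq 0$), becomes indispensable for handling sign-changing potentials.
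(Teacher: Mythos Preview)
Your proposal is correct and follows essentially the same strategy as the paper: an Alessandrini-type integral identity, the $L^2(0,T;\widetilde{H}^s(\Omega))$-Runge approximation of Theorem~\ref{thm: runge} applied to both potentials, and continuity of the bilinear form $(G_1,G_2)\mapsto\int_{\Omega_T}(q_1-q_2)G_1G_2\,dxdt$ under the restrictions~\eqref{eq: restrictions on p} (the paper invokes the estimate~\eqref{eq: computation for L2 estimate} at exactly this point). The only cosmetic difference is that the paper works with forward solutions for both potentials and uses the time-reversal $u^\star(t)=u(T-t)$ to obtain the complementary endpoint conditions, whereas you introduce a backward solution for $q_2$ directly; these formulations are equivalent.
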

	
	Next, we present our main results on inverse problems for nonlinear nonlocal wave equations. To this purpose, let us introduce two different types of nonlinearities having a polyhomogeneous structure.
	
	\begin{definition}[Polyhomogeneous nonlinearities]
		\label{def: polyhomogeneous} 
		Assume that $\mathfrak{r}\vcentcolon =\LC r_k\RC_{k=1}^\infty\subset \R_+$ is a strictly monotonically increasing sequence, $\Omega\subset\R^n$ an open set and $f\colon\Omega\times \R\to\R$ a Carath\'eodory function.
		\begin{enumerate}[(i)]
			\item\label{serial} We call $f$ \emph{serially $\mathfrak{r}$-polyhomogeneous}, if we have
			\begin{equation}
            \label{eq: pointwise conv ser poly}
			f(x,\tau) = \sum_{k\geq 1} f_k(x,\tau)
			\end{equation}
			for a.e. $x\in\Omega$ and all $\tau\in\R$, where each expansion coefficient $f_k\colon \Omega\times \R\to\R$ is a  Carath\'eodory function that is $(r_k+1)$-homogeneous in the $\tau$-variable and there exists $b_k\geq 0$ such that
			\begin{equation}
				\label{eq: continuity nemytskii}
				\left|f_k(x,\tau)\right|\leq b_k|\tau|^{r_k+1}
			\end{equation}
			for a.e. $x\in\Omega$ and all $\tau\in\R$.
			\item\label{asymp} We call $f$ \emph{asymptotically $\mathfrak{r}$-polyhomogeneous}, denoted as
			\[
			f(x,\tau) \sim \sum_{k\geq 1} f_k(x,\tau),
			\]
			if each expansion coefficient $f_k\colon \Omega\times \R\to\R$ is a  Carath\'eodory function
			satisfying \eqref{eq: continuity nemytskii}, is $(r_k+1)$-homogeneous in the $\tau$-variable and for all $N\in\N_{\geq 2}$ there is a constant $C_N>0$ such that
			\begin{equation}
				\label{eq: asymp poly}
				\bigg| f(x,\tau) - \sum_{k=1}^{N-1} f_k(x,\tau)\bigg| \leq C_N |\tau|^{r_N+1}
			\end{equation}
            for all $|\tau|\leq 1$
            and $|f(x,\tau)|\leq C_\infty |\tau|^{r_\infty+1}$ for all $|\tau|>1$ for a.e. $x\in\Omega$.
		\end{enumerate}
	\end{definition}
	
	With Assumption \ref{main assumptions on nonlinearities} and Definition \ref{def: polyhomogeneous} at hand, we can state our main results for inverse problems of nonlinear nonlocal wave equations.

	\begin{theorem}[Recovery of expansion coefficients]
		\label{Thm: recovery of nonlinearity}
		Let $\Omega \subset \R^n$ be a bounded Lipschitz domain, $T>0$, and $s>0$ a non-integer. Let $W_1,W_2\subset \Omega_e$ be nonempty open sets. Suppose the nonlinearities $f^{(j)}$ satisfy the conditions in Assumption \ref{main assumptions on nonlinearities} with $F^{(1)},F^{(2)}\geq 0$, $a^{(1)},a^{(2)}\in L^{\infty}(\Omega)$, $r^{(1)}=r^{(2)}=r_{\infty}$, and are serially or asymptotically $\mathfrak{r}$-polyhomogeneous such that the corresponding (strictly) monotonically increasing sequence $\mathfrak{r}\vcentcolon =(r_k)_{k\in\N}$  fulfills $\mathfrak{r}\subset (0,r_\infty]$ (see~Definition~\ref{def: polyhomogeneous}).
		Assume that the DN maps $\Lambda_{f^{(j)}}$ of
		\begin{equation}
			\label{eq: nonlinear wave equation j=1,2}
			\begin{cases}
				\partial_t^2u +(-\Delta)^su + f^{(j)}(x,u)=0 &\text{ in }\Omega_T,\\
				u=\varphi  &\text{ in }(\Omega_e)_T,\\
				u(0)=\p_t u(0)=0 &\text{ in }\Omega,
			\end{cases}
		\end{equation}
		for $j=1,2$, coincide;
		\begin{equation}\label{same DN map in thm}
			\Lambda_{f^{(1)}}\varphi\big|_{(W_2)_T}=\Lambda_{f^{(2)}}\varphi\big|_{(W_2)_T}  \text{ for any }\varphi \in C^\infty_c((W_1)_T). 
		\end{equation}
		\begin{enumerate}[(i)]
			\item\label{main thm ser poly} If $f^{(j)}$ are serially $\mathfrak{r}$-polyhomogeneous with 
			\begin{equation}
				\label{eq: growth for bk}
				L_j\vcentcolon =\limsup_{k\to\infty}\frac{b_{k+1}^{(j)}}{b_k^{(j)}}<1,
			\end{equation}
			then we have $f^{(1)}=f^{(2)}$ a.e.~$x\in\Omega$ and all $\tau\in\R$.
			\item\label{main thm asymp poly} If $f^{(j)}$ are asymptotically $\mathfrak{r}$-polyhomogeneous.
            Then $f_k^{(1)}(x,\tau)=f_k^{(2)}(x,\tau)$ for a.e.~$x\in\Omega$, all $\tau\in\R$ and all $k\in\N$.
		\end{enumerate}
	\end{theorem}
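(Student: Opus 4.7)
The plan is to reduce the DN-map identity to pointwise agreement of the two nonlinearities along all admissible solutions, and then to extract each homogeneous coefficient $f_k^{(j)}$ by an asymptotic expansion in a small amplitude parameter combined with the Runge approximation of Theorem~\ref{thm: runge}. For $\varphi\in C^\infty_c((W_1)_T)$, let $u^{(j)}$ denote the unique solutions of \eqref{eq: nonlinear wave equation j=1,2}. Since both problems share the exterior datum $\varphi$, we have $u^{(1)}-u^{(2)}=0$ on $(\Omega_e)_T\supset (W_2)_T$, while \eqref{same DN map in thm} translates into $(-\Delta)^s(u^{(1)}-u^{(2)})=0$ on $(W_2)_T$. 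Applying the UCP for the fractional Laplacian at each fixed $t\in(0,T)$ to $(u^{(1)}-u^{(2)})(\cdot,t)\in H^s(\R^n)$ yields $u^{(1)}(\cdot,t)=u^{(2)}(\cdot,t)$ in $\R^n$; writing $u_\varphi$ for this common solution and subtracting the two PDEs produces the pointwise identity
\[
f^{(1)}(x,u_\varphi(x,t))=f^{(2)}(x,u_\varphi(x,t))\quad\text{a.e.\ in }\Omega_T,\qquad \forall\,\varphi\in C^\infty_c((W_1)_T).
\]

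Next, because $\mathfrak{r}\subset(0,r_\infty]$, neither $f^{(j)}$ carries a linear-in-$\tau$ term, so replacing $\varphi$ by $\epsilon\varphi$ with $\epsilon>0$ small and using the well-posedness of \eqref{eq: nonlinear wave equation j=1,2} together with \eqref{eq: bound on derivative} should give an asymptotic expansion $u_{\epsilon\varphi}=\epsilon v_\varphi+R_\epsilon$ with $\|R_\epsilon\|_{C([0,T];H^s(\R^n))}=O(\epsilon^{r_1+1})$, where $v_\varphi$ solves the free linear equation \eqref{eq:runge with zero initial} with $q\equiv 0$ and exterior datum $\varphi$. Substituting into the identity above, dividing by $\epsilon^{r_1+1}$, invoking the $(r_1+1)$-homogeneity of $f_1^{(j)}$ together with the polyhomogeneous structure (serially summed or controlled by \eqref{eq: asymp poly}), and letting $\epsilon\to 0^+$, one arrives at $f_1^{(1)}(x,v_\varphi)=f_1^{(2)}(x,v_\varphi)$ a.e.\ in $\Omega_T$ for every $\varphi$. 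Since $\varphi$ vanishes on $\Omega_T$, one has $v_\varphi|_{\Omega_T}=(v_\varphi-\varphi)|_{\Omega_T}$, so Theorem~\ref{thm: runge} with $q\equiv 0$ makes $\{v_\varphi|_{\Omega_T}\}$ dense in $L^2(0,T;\widetilde{H}^s(\Omega))$ and a fortiori in $L^2(\Omega_T)$. Approximating elementary profiles of the form $\tau\,\chi_E(x)\,\chi_{(T/4,3T/4)}(t)$ by a sequence $v_{\varphi_n}$ and using the Nemytskii continuity induced by \eqref{eq: continuity nemytskii} then yields $f_1^{(1)}(x,\tau)=f_1^{(2)}(x,\tau)$ for a.e.\ $x\in\Omega$ and every $\tau\in\R$.

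The rest of the argument is an induction on $k$: once $f_l^{(1)}=f_l^{(2)}$ has been established for all $l<k$, the pointwise identity from the first step reduces to $\sum_{l\geq k}(f_l^{(1)}-f_l^{(2)})(x,u_{\epsilon\varphi})=0$ in $\Omega_T$, and the same division-and-limit procedure at the scale $\epsilon^{r_k+1}$, followed by Theorem~\ref{thm: runge} and the $(r_k+1)$-homogeneity of $f_k^{(j)}$, produces $f_k^{(1)}=f_k^{(2)}$. This completes part~(ii). In case~(i), the ratio condition $L_j<1$ ensures absolute convergence of both series $f^{(j)}=\sum_k f_k^{(j)}$ on every bounded $\tau$-interval, so coefficient-wise equality upgrades to $f^{(1)}=f^{(2)}$. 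The hardest part of the scheme is to make the asymptotic expansion for $u_{\epsilon\varphi}$ quantitatively precise to the order needed at each step of the induction and to control the remainder in a norm strong enough to survive the division by $\epsilon^{r_k+1}$; this will require energy estimates for the nonlinear equation (building on \cite{Semilinear-nonlocal-wave-paper}), Sobolev embeddings of $H^s(\R^n)$ to handle fractional powers of $v_\varphi$, and, in case~(ii), the small-amplitude constraint $|\tau|\leq 1$ that underlies \eqref{eq: asymp poly}.
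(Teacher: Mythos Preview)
Your strategy matches the paper's: UCP forces $u_\eps^{(1)}=u_\eps^{(2)}$ and hence $f^{(1)}(u_\eps)=f^{(2)}(u_\eps)$; linearize as $u_\eps=\eps v+R_\eps$ with $R_\eps=O(\eps^{r_\infty+1})$ in $L^\infty(0,T;\widetilde H^s(\Omega))$; divide by $\eps^{r_k+1}$ and send $\eps\to 0$ to isolate $f_k^{(j)}(v)$; then vary $v$ via the Runge approximation and iterate in $k$. Two points deserve correction, however.

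First, you misplace the role of the ratio condition $L_j<1$. In case~(i) it is \emph{not} needed at the end to sum the recovered coefficients---the identity $f^{(j)}=\sum_k f_k^{(j)}$ holds by definition of serially polyhomogeneous, so $f_k^{(1)}=f_k^{(2)}$ for all $k$ already gives $f^{(1)}=f^{(2)}$. Where $L_j<1$ is essential is in the limiting step itself: after dividing by $\eps^{r_1+1}$ you face the infinite sum $\sum_{k\geq 1} f_k^{(j)}\bigl(\eps^{-(r_1+1)/(r_k+1)}u_\eps\bigr)$, and to pass the limit inside you need dominated convergence in a suitable $L^{q/(r_\infty+1)}L^{p/(r_\infty+1)}$ norm. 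The paper derives a uniform bound $\|f_k^{(j)}(\eps^{-(r_1+1)/(r_k+1)}u_\eps)\|\le M_k^{(j)}$ with $M_k^{(j)}$ proportional to $b_k^{(j)}$, and the ratio test with $L_j<1$ then gives $\sum_k M_k^{(j)}<\infty$. Without this, you cannot justify that the tail $\sum_{k\geq 2}$ contributes $o(1)$.

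Second, your choice of target profile $\tau\,\chi_E(x)\,\chi_{(T/4,3T/4)}(t)$ is problematic. For $s>1/2$ this function is not in $L^2(0,T;\widetilde H^s(\Omega))$, so you can only approximate it by $v_{\varphi_n}$ in $L^2(\Omega_T)$; but the Nemytskii map $u\mapsto f_1^{(j)}(\cdot,u)$ is continuous from $L^2$ only into $L^{2/(r_1+1)}$, which fails to be a Lebesgue space once $r_1>1$ (a case allowed by the hypotheses when $2s<n$ and $n<4s$, or when $2s\geq n$). The paper instead approximates a smooth $\Psi\in C_c^\infty(\Omega_T)$ in $L^2(0,T;\widetilde H^s(\Omega))$, extracts a subsequence converging for a.e.\ $t$ in $\widetilde H^s(\Omega)\hookrightarrow L^p(\Omega)$ with $p=2n/(n-2s)$, uses the Nemytskii continuity $L^p\to L^{p/(r_\infty+1)}$, and finally specializes $\Psi(x,t)=\eta(t)\Phi(x)$ with $\eta\equiv 1$ near $t_0$ and $\Phi\equiv 1$ near $x_0$ to extract $f_1^{(1)}(x_0,1)=f_1^{(2)}(x_0,1)$; homogeneity then yields equality for all $\tau$.
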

	
    \begin{remark}
    The earlier article \cite{Semilinear-nonlocal-wave-paper} by the authors required nonnegativity of the potential $q\in L^p(\Omega)$ or the stronger assumption that $q\in L^\infty(\Omega)$. This was mainly used for the construction of solutions $u_\eps$, $\eps>0$, to a related parabolically regularized problem, cf. \cite[p.15, Proof of Claim 4.2]{Semilinear-nonlocal-wave-paper}, and for deducing their convergence properties as $\eps\to 0$. On the inverse problem for the nonlinear equation, the present manuscript generalizes results of \cite{Semilinear-nonlocal-wave-paper} from homogeneous to polyhomogeneous nonlinearities, allowing the recovery of the expansion coefficients term-by-term.
    
    \end{remark}

	\subsection{Comparison to inverse problems for local wave equations}
	
	The inverse problem of recovering coefficient functions in linear wave equations is classical, and the first results in this direction were by Belishev and Kurylev using a boundary control method~\cite{Be87,BeKu92}, see also  \cite{KKL01}.
	Nowadays, there are also results in reconstructing Riemannian manifolds for linear wave and other equations from partial data boundary measurements, such as in \cite{AKKLT,Helin,Isozaki,KKLO,KrKL,KOP,LO}.
	However, in the linear case, these results are based on the boundary control method, which requires the (lower order) coefficient functions to be time-independent or real-analytic in time, see \cite{Esk}.
    The recent results in \cite{AFO2022,AFO2024} have relaxed the assumptions on the smoothness of the coefficients in the linear local problem under certain curvature bounds, and in particular for metrics close to the Minkowski metric.
	Thanks to the unique continuation and Runge approximation properties in the nonlocal case, such restrictions are not needed, and one can consider much lower regularity coefficients, such as in Theorem~\ref{Thm: recovery of nonlinearity} (which also includes the linear case, see also \cite{Semilinear-nonlocal-wave-paper}).
	Moreover, nonlocal wave equations often enjoy infinite speed of propagation of the solution, and hence one can recover coefficients in larger domains than in the local case, where causality forces restrictions on the possible domains of reconstruction. 
    There are few results on low-regularity nonlinearities in the local setting, such as \cite{Johansson}, where the local elliptic version $-\Delta u + a(x,u)=0$ is considered, and it is proved that a nonlinearity $a(x,\tau)$ which is $L^r(\Omega)$, $r>n/2$, in $x$ and $C^{1,\alpha}(\R)$ in $\tau$ can be uniquely determined.
	
	Recently, inverse problems for nonlinear (local) wave equations have become mainstream. Let us elaborate on several works in this research field. The nonlinear (self-)interaction of waves will generate new waves, and this effect can be treated as a benefit in solving related inverse problems in the hyperbolic and elliptic settings. The seminal work \cite{KLU18} demonstrated that local measurements can be utilized to recover global topology and differentiable structure uniquely for a semilinear wave equation with a quadratic nonlinearity.  Furthermore, general semilinear wave equations on Lorentzian manifolds and related inverse problems were studied for the Einstein-Maxwell equation in  \cite{LUW18} and \cite{LUW17arXiv}, respectively. We also refer readers to \cite{dHUW18,KLOU14,LLL2024determining,LLPT20,LLPT21,LLPT24,WZ19} for different studies, including simultaneous recovery, geometrical inverse problems, and references therein.
	
	\subsection{Organization of the paper}
	
	The remainder of this paper is arranged as follows.
	In Section \ref{sec: pre}, we introduce several function spaces used in this paper. In Section \ref{sec: very weak sol.}, we show that there exists a unique very weak solution to linear nonlocal wave equations. In Section \ref{sec: runge+inverse}, we prove a stronger version of Runge approximation and use it to solve an inverse problem for linear nonlocal wave equations. Finally, we prove Theorem \ref{Thm: recovery of nonlinearity} in Section \ref{sec: nonlinear wave}.

	\section{Preliminaries}\label{sec: pre}
	
	In this section, we introduce some notation, which will be used throughout the whole article, and use this occasion to recall several basic facts on fractional Sobolev spaces as well as the fractional Laplacian.
	
	Throughout this article, we use the following convention for the Fourier transform
	\begin{equation}
		\fourier u(\xi) \vcentcolon =\hat{u}(\xi)\vcentcolon = \int_{\R^n} u(x)e^{-\mathrm{i}x \cdot \xi} \,dx
	\end{equation}
	for functions $u\colon\R^n\to\R$ for example in the Schwartz space $\schwartz(\R^n)$, where $\mathrm{i}$ is the imaginary unit. By duality, the Fourier transform can be extended to the space of tempered distributions $\tempered(\R^n) =(\schwartz(\R^n))^*$, and we use the same notation for it. We denote the inverse Fourier transform by $\ifourier$. 
	
	For any $s\in\R$, we let $H^{s}(\R^{n})$ stand for the \emph{fractional Sobolev space}, which consists of all tempered distributions $u$ such that
	\begin{equation}\notag
		\|u\|_{H^{s}(\mathbb{R}^{n})}\vcentcolon = \left\|\langle D\rangle^s u \right\|_{L^2(\R^n)}<\infty,
	\end{equation}
	where $\langle D\rangle^s$ denotes the Bessel potential of order $s$ with the Fourier symbol $\langle\xi\rangle^s\vcentcolon = \LC 1+|\xi|^2\RC^{s/2}$. We shall also need the following local versions of the fractional Sobolev spaces $H^s(\R^n)$:
	\begin{align*}
		H^{s}(\Omega) & \vcentcolon =\left\{ u|_{\Omega} \ ; \, u\in H^{s}(\mathbb{R}^{n})\right\},\\
		\widetilde{H}^{s}(\Omega) & \vcentcolon =\text{closure of \ensuremath{C_{c}^{\infty}(\Omega)} in \ensuremath{H^{s}(\mathbb{R}^{n})}},
	\end{align*}
	where $\Omega\subset\R^n$ is an open set and
	$H^{s}(\Omega)$ naturally endowed with the quotient norm
	\[
	\|u\|_{H^{s}(\Omega)}\vcentcolon =\inf\left\{ \|U\|_{H^{s}(\mathbb{R}^{n})} \ ; \,  U\in H^{s}(\mathbb{R}^{n})\mbox{ and }U|_{\Omega}=u\right\}.
	\]
	Furthermore, we set
	\[
	\widetilde{L}^2(\Omega)\vcentcolon = \widetilde{H}^0(\Omega)\quad \text{and}\quad  \|\cdot\|_{\widetilde{L}^2(\Omega)}\vcentcolon =\|\cdot\|_{\widetilde{H}^0(\Omega)}=\|\cdot\|_{L^2(\R^n)}.
	\]
	Let us also emphasize that if $\Omega$ is a Lipschitz domain, then one has the following identification
	\begin{equation}
		(\widetilde{H}^s(\Omega))^*=H^{-s}(\Omega).
	\end{equation}
	
	The \emph{fractional Laplacian} of order $s> 0$ is the homogeneous counterpart of the Bessel potential and hence is the Fourier multiplier
	\begin{equation}\label{eq:fracLapFourDef}
		(-\Delta)^{s} u = \ifourier\big( \abs{\xi}^{2s}\widehat{u}(\xi)\big), \text{ for }u\in \schwartz(\R^n).
	\end{equation}
	It is not difficult to see that an equivalent norm on $H^s(\R^n)$ is given by
	\begin{equation}
		\label{eq: equivalent norm on Hs}
		\|u\|_{H^s(\R^n)}^*= \|u\|_{L^{2}(\mathbb{R}^{n})}+\big\|(-\Delta)^{s/2}u  \big\|_{L^{2}(\mathbb{R}^{n})},
	\end{equation}
	and the fractional Laplacian is a bounded linear operator as a map $(-\Delta)^{s}\colon H^{t}(\R^n) \to H^{t-2s}(\R^n)$ for all $s \geq0$ and $t \in \R$. In fact, one can also write $(-\Delta)^s = (-\Delta)^k (-\Delta)^{\alpha}$, where $s=k+\alpha$ with $k=\lfloor s \rfloor \in \N \cup \{0\}$ and $\alpha=s-k\in (0,1)$.

	Using the Hardy--Littlewood--Sobolev lemma and H\"older's inequality, one can easily see that the following Poincaré inequality holds:

	\begin{proposition}[{Poincar\'e inequality (cf.~\cite[Lemma~5.4]{RZ-unbounded})}]\label{prop:Poincare ineq} Let $\Omega\subset\R^n$ be a bounded domain. For any $s> 0$, there exists $C>0$ such that
		\begin{equation}
			\label{eq: poincare ineq}
			\|u\|_{L^2(\Omega)}\leq C  \big\|(-\Delta)^{s/2}u  \big\|_{L^2(\R^n)}, \text{ for any }u\in \widetilde{H}^s(\Omega).
		\end{equation}
	\end{proposition}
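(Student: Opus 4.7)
The plan is to split into two regimes according to whether $2s<n$ or $2s\geq n$, since the Hardy--Littlewood--Sobolev (HLS) inequality only directly covers the subcritical regime.

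\textbf{Case 1: $0<s<n/2$.} By density I may assume $u\in C_c^\infty(\Omega)$, so that both sides of \eqref{eq: poincare ineq} are finite and $\supp u\subset \overline{\Omega}$. Writing $u=(-\Delta)^{-s/2}\left[(-\Delta)^{s/2}u\right]$ on the Fourier side, the HLS inequality (i.e. $L^2$-$L^q$ boundedness of the Riesz potential $I_s=(-\Delta)^{-s/2}$) yields
\begin{equation}
\|u\|_{L^q(\R^n)}\leq C\,\big\|(-\Delta)^{s/2}u\big\|_{L^2(\R^n)}, \qquad \tfrac{1}{q}=\tfrac{1}{2}-\tfrac{s}{n}.
\end{equation}
Since $q>2$ and $u$ is supported in $\overline{\Omega}$ with $|\Omega|<\infty$, H\"older's inequality gives $\|u\|_{L^2(\Omega)}\leq |\Omega|^{s/n}\|u\|_{L^q(\R^n)}$, which combined with the previous bound yields \eqref{eq: poincare ineq} with $C=C_{n,s}|\Omega|^{s/n}$. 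Density of $C_c^\infty(\Omega)$ in $\widetilde{H}^s(\Omega)$ then extends the estimate to all $u\in \widetilde{H}^s(\Omega)$.

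\textbf{Case 2: $s\geq n/2$.} Here I would bootstrap from Case 1 by a standard interpolation in the Fourier variable. Fix any $t\in(0,n/2)$; in particular $t<s$. By Plancherel and H\"older on the Fourier side with conjugate exponents $s/(s-t)$ and $s/t$,
\begin{equation}
\int_{\R^n}|\xi|^{2t}|\widehat{u}(\xi)|^2\,d\xi \leq \left(\int_{\R^n}|\widehat{u}|^2\right)^{1-t/s}\left(\int_{\R^n}|\xi|^{2s}|\widehat{u}|^2\right)^{t/s},
\end{equation}
so that $\|(-\Delta)^{t/2}u\|_{L^2(\R^n)}\leq \|u\|_{L^2(\R^n)}^{1-t/s}\|(-\Delta)^{s/2}u\|_{L^2(\R^n)}^{t/s}$. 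Applying Case 1 at the exponent $t$ and using $\|u\|_{L^2(\R^n)}=\|u\|_{L^2(\Omega)}$ (again because $\supp u\subset\overline{\Omega}$), I obtain
\begin{equation}
\|u\|_{L^2(\Omega)}\leq C_t\,\|u\|_{L^2(\Omega)}^{1-t/s}\big\|(-\Delta)^{s/2}u\big\|_{L^2(\R^n)}^{t/s}.
\end{equation}
If $u\not\equiv 0$, I divide by $\|u\|_{L^2(\Omega)}^{1-t/s}$ and raise to the power $s/t$ to conclude.

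\textbf{Main obstacle.} The substantive content sits entirely in Case 1 and is an application of HLS; the technical point is the supercritical regime $s\geq n/2$, where HLS no longer provides a direct embedding into a better Lebesgue space. I expect the interpolation trick above to be the cleanest workaround, because it only uses Plancherel and finiteness of $\|u\|_{L^2(\R^n)}$ (which is automatic from $u\in\widetilde{H}^s(\Omega)\subset H^s(\R^n)\subset L^2(\R^n)$). The only subtlety to check carefully is the density argument that allows me to work initially with $C_c^\infty(\Omega)$ representatives so that all quantities are finite before invoking HLS.
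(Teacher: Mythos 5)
Your proof is correct. Case~1 ($0<s<n/2$) is exactly the route the paper alludes to in the sentence preceding the proposition — Hardy–Littlewood–Sobolev to bound $\|u\|_{L^q(\R^n)}$ with $1/q = 1/2 - s/n$, then Hölder on the bounded set $\Omega$ — and it is carried out cleanly, with the correct exponent $|\Omega|^{s/n}$. The paper itself gives no detail beyond this one‑line remark and a citation, and in particular does not address the fact that the HLS/Hölder argument by itself only works in the subcritical range; you correctly identify this and supply the missing ingredient.

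Case~2 ($s\geq n/2$) is the genuine addition. The Fourier‑side interpolation
\begin{equation}
\big\|(-\Delta)^{t/2}u\big\|_{L^2(\R^n)} \leq \|u\|_{L^2(\R^n)}^{1-t/s}\big\|(-\Delta)^{s/2}u\big\|_{L^2(\R^n)}^{t/s}, \qquad 0<t<s,
\end{equation}
is a correct application of Hölder with exponents $s/(s-t)$ and $s/t$, the normalization constants from Plancherel cancel on both sides, and combining with Case~1 at the auxiliary exponent $t\in(0,n/2)$ and absorbing the factor $\|u\|_{L^2(\Omega)}^{1-t/s}$ yields the estimate. Note that the naive alternative of splitting $|\xi|^{2t}\leq 1+|\xi|^{2s}$ would only give $\|u\|_{L^2}^2\leq C(\|u\|_{L^2}^2+\|(-\Delta)^{s/2}u\|_{L^2}^2)$, which one cannot absorb without knowing $C<1$; your interpolation sidesteps this. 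The density reduction to $C_c^\infty(\Omega)$ is legitimate since both sides of \eqref{eq: poincare ineq} are continuous in the $H^s(\R^n)$‑norm (for the right‑hand side, $|\xi|^{2s}\leq\langle\xi\rangle^{2s}$). Overall: same core idea as the paper for the subcritical range, plus a correct and necessary interpolation argument that the paper's terse remark glosses over.
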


	Taking into account that \eqref{eq: equivalent norm on Hs} is an equivalent norm on $\widetilde{H}^s(\Omega)$, the Poincaré inequality (Propositions~\ref{prop:Poincare ineq}) ensures the following simple lemma, which will be used throughout the whole article.
	\begin{lemma}
		\label{lemma: equivalent norm on tilde spaces}
		Let $\Omega\subset\R^n$ be a bounded domain and $s\geq 0$. Then an equivalent norm on $\widetilde{H}^s(\Omega)$ is given by
		\begin{equation}
			\label{eq: equivalent norm on tilde spaces}
			\|u\|_{\widetilde{H}^s(\Omega)}=\big\|(-\Delta)^{s/2}u \big\|_{L^2(\R^n)},
		\end{equation}
		which is induced by the inner product
		\begin{equation}
			\langle u,v\rangle_{\widetilde{H}^s(\Omega)}=\big\langle (-\Delta)^{s/2}u,(-\Delta)^{s/2}v \big\rangle_{L^2(\R^n)}.
		\end{equation}
	\end{lemma}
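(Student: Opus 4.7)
The plan is to reduce the statement to the two ingredients already collected in this section: the equivalent norm on $H^s(\R^n)$ recorded in \eqref{eq: equivalent norm on Hs} and the Poincar\'e inequality of Proposition~\ref{prop:Poincare ineq}. The case $s=0$ is trivial since then $(-\Delta)^{s/2}=\mathrm{Id}$ and the right-hand side of \eqref{eq: equivalent norm on tilde spaces} reduces by definition to $\|u\|_{\widetilde{L}^2(\Omega)}$, so throughout I would focus on $s>0$.

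First I would observe that, by definition, $\widetilde{H}^s(\Omega)$ is a closed subspace of $H^s(\R^n)$, hence it inherits the norm $\|\cdot\|_{H^s(\R^n)}^*$ of \eqref{eq: equivalent norm on Hs}. Moreover, every element of $\widetilde{H}^s(\Omega)$ is a limit in $H^s(\R^n)$ of elements of $C_c^\infty(\Omega)$, so in particular
\[
\|u\|_{L^2(\R^n)}=\|u\|_{L^2(\Omega)}\quad\text{for all } u\in\widetilde{H}^s(\Omega).
\]
Combined with Proposition~\ref{prop:Poincare ineq}, this yields $\|u\|_{L^2(\R^n)}\leq C\|(-\Delta)^{s/2}u\|_{L^2(\R^n)}$, whence
\[
\|u\|_{H^s(\R^n)}^*=\|u\|_{L^2(\R^n)}+\big\|(-\Delta)^{s/2}u\big\|_{L^2(\R^n)}\leq (C+1)\big\|(-\Delta)^{s/2}u\big\|_{L^2(\R^n)}.
\]
Conversely, $\|(-\Delta)^{s/2}u\|_{L^2(\R^n)}\leq \|u\|_{H^s(\R^n)}^*$ is immediate from the same formula. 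This gives the claimed two-sided bound, i.e.\ the right-hand side of \eqref{eq: equivalent norm on tilde spaces} is an equivalent norm on $\widetilde{H}^s(\Omega)$.

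Finally, for the inner product statement, I would check the standard properties. Symmetry and bilinearity are inherited directly from the $L^2(\R^n)$ inner product and the linearity of $(-\Delta)^{s/2}$. Positivity follows from Poincar\'e once more: if $\langle u,u\rangle_{\widetilde{H}^s(\Omega)}=\|(-\Delta)^{s/2}u\|_{L^2(\R^n)}^2=0$, then $\|u\|_{L^2(\R^n)}=0$, so $u=0$ in $\widetilde{H}^s(\Omega)$. The main (mild) obstacle is the identification $\|u\|_{L^2(\R^n)}=\|u\|_{L^2(\Omega)}$ for arbitrary $u\in\widetilde{H}^s(\Omega)$, which is why the argument is carried out at the level of $C_c^\infty(\Omega)$ and then extended by density using the continuity of the restriction map $H^s(\R^n)\to L^2(\R^n\setminus\overline{\Omega})$.
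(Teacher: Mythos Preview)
Your proof is correct and follows exactly the approach the paper indicates in the sentence preceding the lemma: combine the equivalent norm \eqref{eq: equivalent norm on Hs} on $H^s(\R^n)$ with the Poincar\'e inequality of Proposition~\ref{prop:Poincare ineq}. The paper gives no further details beyond that sentence, so your write-up in fact supplies a complete proof where the paper leaves it to the reader.
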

	
	Finally, let us mention that if $X$ is a Banach space, then we denote by $C^k([a,b]\,;X)$ and $L^p(a,b\,;X)$ ($k\in\N,1\leq p\leq \infty$) the space of $k$-times continuously differentiable functions and the space of measurable functions $u\colon (a,b)\to X$ such that $t\mapsto \|u(t)\|_X\in L^p([a,b])$, respectively. These Banach spaces are endowed with the norms
	\begin{equation}
		\label{eq: Bochner spaces}
		\begin{split}
			\|u\|_{L^p(a,b\,;X)}&\vcentcolon = \bigg(\int_a^b\|u(t)\|_{X}^p\,dt\bigg)^{1/p}<\infty,\\
			\|u\|_{C^k([a,b];X)}&\vcentcolon = \sup_{0\leq \ell\leq k}\left\|\partial_t^{\ell}u\right\|_{L^{\infty}(a,b;X)}
		\end{split}
	\end{equation} 
	(with the usual modification for $p=\infty$).

	\section{Existence and uniqueness of very weak solutions to linear nonlocal wave equations}\label{sec: very weak sol.}
	
	The purpose of this section is to extend the well-established theory of \emph{very weak solutions} to linear wave equations in our nonlocal setting. In Section~\ref{subsec: def very weak sol}, we motivate and present the rigorous definition of these solutions. Afterward, in Section~\ref{sec: spectral lemma} we formulate a spectral theoretic lemma, which we need later in Section~\ref{subsec: construction very weak without potential} for the construction of very weak solutions to NWEQs without a potential term. In Section~\ref{subsec: Very weak solutions with potential}, we then establish via a fixed point argument the well-posedness theory of very weak solutions to linear NWEQs with a nonzero potential. Finally, in Section~\ref{subsec: properties of very weak sol} we discuss some properties of very weak solutions. In particular, we show that all weak solutions are very weak solutions, which in turn are distributional solutions.

	Throughout the whole section, $\Omega\subset\R^n$ denotes a fixed bounded Lipschitz domain and $s\in \R_+\setminus \N$. As usual $\langle\cdot,\cdot\rangle$ denotes the duality pairing between $\widetilde{H}^s(\Omega)$ and $H^{-s}(\Omega)$. These Hilbert spaces are endowed with the norms $\|\cdot\|_{\widetilde{H}^s(\Omega)}$, introduced in Lemma~\ref{lemma: equivalent norm on tilde spaces}, and
	\begin{equation}\label{negative norm}
		\|G\|_{H^{-s}(\Omega)} = \sup\big\{ |\langle G,v\rangle| ; \, v\in\widetilde{H}^s(\Omega), \, \|v\|_{\widetilde{H}^s(\Omega)}=1\big\}.
	\end{equation}

	\subsection{Definition of very weak solutions}
	\label{subsec: def very weak sol}
	
	Next, we introduce the notion of very weak solutions to linear NWEQs with possibly a nonzero potential $q$.
	\begin{definition}
		\label{def: very weak sol}
		Let $F\in L^2(0,T;H^{-s}(\Omega))$, $u_0\in \wt{L}^2(\Omega)$, $u_1\in H^{-s}(\Omega)$ and $q\in L^{p}(\Omega)$, where the exponent $p$ satisfies the restriction \eqref{eq: restrictions on p}. A function  $u\colon \R^n_T\to \R$ is called a \emph{very weak solution} of 
		\begin{equation}
			\label{eq: very weak sol}
			\begin{cases}
				\partial_t^2u +(-\Delta)^su + qu=F &\text{ in }\Omega_T,\\
				u=0  &\text{ in }(\Omega_e)_T,\\
				u(0)=u_0, \quad \partial_t u(0)=u_1 &\text{ in }\Omega,
			\end{cases}
		\end{equation}
		if $u\in C([0,T];\wt{L}^2(\Omega))\cap C^1([0,T];H^{-s}(\Omega))$ satisfies
		\begin{equation}
			\label{eq: def of very weak sol}
			\begin{split}
				\int_0^T \left\langle u(t),G(t)\right\rangle_{L^2(\Omega)}\,dt =\int_0^T\left\langle F(t),v(t)\right\rangle_{L^2(\Omega)}\, dt +\left\langle u_1,v(0)\right\rangle-\left\langle u_0,\partial_t v(0)\right\rangle_{L^2(\Omega)},
			\end{split}
		\end{equation}
		for all $G\in L^2(0,T;\widetilde{L}^2(\Omega))$, where $v\in C([0,T];\widetilde{H}^s(\Omega))\cap C^1([0,T];\wt{L}^2(\Omega))$ is the unique (weak) solution of the backward equation 
		\begin{equation}
			\label{eq: weak sol}
			\begin{cases}
				\partial_t^2v +(-\Delta)^sv + qv=G &\text{ in }\Omega_T,\\
				v=0  &\text{ in }(\Omega_e)_T,\\
				v(T)=\partial_t v(T)=0 &\text{ in }\Omega
			\end{cases}
		\end{equation}
		(see~\cite[Theorem~3.1]{Semilinear-nonlocal-wave-paper}).
	\end{definition}

	\begin{remark}
		We recall that if $F\in L^2(0,T;\widetilde{L}^2(\Omega))$, $u_0\in \widetilde{H}^s(\Omega)$ and $u_1\in \wt{L}^2(\Omega)$, then a \emph{weak solution} of \eqref{eq: very weak sol} is a function $u\in C([0,T];\widetilde{H}^s(\Omega))\cap C^1([0,T];\wt{L}^2(\Omega))$ such that
		\begin{equation}
			\label{eq: weak solution}
			\frac{d}{dt}\left\langle\partial_t u,w \right\rangle_{L^2(\Omega)}+ \big\langle (-\Delta)^{s/2}u,(-\Delta)^{s/2}w \big\rangle_{L^2(\R^n)}+\left\langle qu,w \right\rangle_{L^2(\Omega)}=\langle F,w\rangle_{L^2(\Omega)},
		\end{equation}
		for all $w\in \widetilde{H}^s(\Omega)$ in the sense of $\distr((0,T))$. Often, we refer to weak solutions or very weak solutions simply as solutions, because the source term in the relevant PDEs determines which notion of solutions we invoke. Moreover, weak solutions are always very weak solutions, as we will see later in Proposition~\ref{prop: weak sol are very weak sol}.
	\end{remark}
	
	\begin{remark}
		Let us emphasize that the restriction on the exponent $p$ comes from the observation that if $q\in L^p(\Omega)$ and $u\in C([0,T];L^2(\Omega))$, then we have $qu\in L^2(0,T;H^{-s}(\Omega))$ (see \eqref{eq: continuity estimate potential} in the proof of Theorem~\ref{eq: thm well-posedness with q nonzero}).
	\end{remark}
	
	Before proceeding, let us give a formal motivation for imposing the identity  \eqref{eq: def of very weak sol}. We test the equation $\partial_t^2u +(-\Delta)^su + qu=F$ in $H^{-s}(\Omega)$ by the solution $v\in C([0,T];\widetilde{H}^s(\Omega))$ to \eqref{eq: weak sol} and integrate the resulting identity from $t=0$ to $t=T$. This gives
	\begin{equation}
		\label{eq: derivation of very weak formula}
		\begin{split}
			&\quad \, \int_0^T  \left\langle \partial_t^2 u(t),v(t)\right\rangle \,dt\\
			&=-\int_0^T \left\langle (-\Delta)^s u(t),v(t)\right\rangle\,dt -\int_0^T\left\langle qu(t),v(t)\right\rangle\,dt +\int_0^T \langle F(t),v(t)\rangle \,dt\\
			&=-\int_0^T\left\langle (-\Delta)^s v(t),u(t)\right\rangle\,dt -\int_0^T\left\langle qv(t),u(t)\right\rangle\,dt +\int_0^T \langle F(t),v(t)\rangle \,dt\\
			&= \int_0^T \left\langle \partial_t^2 v(t),u(t)\right\rangle \,dt - \int_0^T \langle G(t),u(t)\rangle\,dt +\int_0^T \langle F(t),v(t)\rangle \,dt.
		\end{split}
	\end{equation}
	Using an integration by parts, the term on the left-hand side and the first term on the right-hand side can be rewritten as
	\[
	\begin{split}
		\int_0^T \left\langle \partial_t^2 u(t),v(t)\right\rangle dt&=-\int_0^T \left\langle \partial_t u(t),\partial_t v(t)\right\rangle dt +\left\langle \partial_t u(T),v(T)\right\rangle -\left\langle \partial_t u(0),v(0)\right\rangle\\
		&=-\int_0^T \left\langle \partial_t u(t),\partial_t v(t)\right\rangle \,dt-\left\langle u_1,v(0)\right\rangle,\\
		\int_0^T \left\langle \partial_t^2 v(t),u(t)\right\rangle dt&=-\int_0^T \left\langle \partial_t v(t),\partial_t u(t)\right\rangle dt +\left\langle \partial_t v(T),u(T)\right\rangle -\left\langle \partial_t v(0),u(0)\right\rangle\\
		&=-\int_0^T \left\langle \partial_t u(t),\partial_t v(t)\right\rangle \,dt-\left\langle \partial_t v(0),u_0\right\rangle.
	\end{split}
	\]
	Inserting these identities into \eqref{eq: derivation of very weak formula}, we get \eqref{eq: def of very weak sol}. 
	
	\begin{remark}
		Note that the above computations are only formal because the integration by parts identities require better regularity than $\partial_t u\in L^2(0,T;H^{-s}(\Omega))$ 
        for the integral $\int_0^T \left\langle \partial_t u(t),\partial_t v(t)\right\rangle \,dt$ to make sense.
	\end{remark}
 
	\subsection{A spectral theoretic lemma}
	\label{sec: spectral lemma}
	
	For the construction of very weak solutions to linear nonlocal wave equations, we will need the following elementary spectral theoretic result. Even though the argument is standard, we offer the proof in Appendix~\ref{sec: proof of spectral lemma} for readers' convenience.
	\begin{lemma}
		\label{spectral lemma}
		Let $\Omega\subset\R^n$ be a bounded Lipschitz domain and $s\in \R_+ \setminus \N$. There exists a sequence of (Dirichlet) eigenvalues of the fractional Laplacian $(-\Delta)^s$ satisfying $0<\lambda_1\leq \lambda_2\leq \ldots$ with $\lambda_k\to\infty$ as $k\to\infty$ such that the corresponding eigenfunctions $\LC \phi_k\RC_{k\in\N}\subset\widetilde{H}^s(\Omega)$ have the following properties:
		\begin{enumerate}[(i)]
			\item\label{spectral prop 1} $\LC\phi_k\RC_{k\in\N}$ is an orthonormal basis of $\wt{L}^2(\Omega)$,
			\item\label{spectral prop 2} $\big(\lambda_k^{-1/2}\phi_k\big)_{k\in\N}$ is an orthonormal basis of $\widetilde{H}^s(\Omega)$,
			\item\label{spectral prop 3} $\big( \lambda_k^{1/2}\phi_k\big)_{k\in\N}$ is an orthonormal basis of $H^{-s}(\Omega)$.
		\end{enumerate}
	\end{lemma}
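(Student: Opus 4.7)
The plan is to apply the spectral theorem for compact self-adjoint operators to the inverse of the Dirichlet fractional Laplacian on $\Omega$, and then to transfer the resulting eigenbasis from $\widetilde{L}^2(\Omega)$ to $\widetilde{H}^s(\Omega)$ and $H^{-s}(\Omega)$ by rescaling and duality. First I introduce the symmetric bilinear form $a(u,v)\vcentcolon=\langle(-\Delta)^{s/2}u,(-\Delta)^{s/2}v\rangle_{L^2(\R^n)}$, which by Lemma~\ref{lemma: equivalent norm on tilde spaces} is an inner product on $\widetilde{H}^s(\Omega)$ inducing the norm $\|\cdot\|_{\widetilde{H}^s(\Omega)}$. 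The Riesz representation theorem then supplies, for each $f\in\widetilde{L}^2(\Omega)$, a unique $Kf\in\widetilde{H}^s(\Omega)$ with $a(Kf,v)=\langle f,v\rangle_{L^2(\R^n)}$ for every $v\in\widetilde{H}^s(\Omega)$, and $K$ is readily seen to be symmetric and positive when viewed as an operator on $\widetilde{L}^2(\Omega)$.

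Next, $K$ factors through the embedding $\widetilde{H}^s(\Omega)\hookrightarrow L^2(\Omega)$, which is compact because elements of $\widetilde{H}^s(\Omega)$ are supported in the compact set $\overline{\Omega}$, so the classical Rellich theorem applies on a fixed bounded neighbourhood of $\overline{\Omega}$. Consequently $K$ is compact, self-adjoint and positive on $\widetilde{L}^2(\Omega)$, and the spectral theorem yields an orthonormal basis $\{\phi_k\}_{k\in\N}$ of $\widetilde{L}^2(\Omega)$ with $K\phi_k=\mu_k\phi_k$ for positive $\mu_k\to 0$. Setting $\lambda_k\vcentcolon=\mu_k^{-1}\to\infty$, I obtain $\phi_k\in\widetilde{H}^s(\Omega)$ and the variational eigenvalue relation $a(\phi_k,v)=\lambda_k\langle\phi_k,v\rangle_{L^2(\R^n)}$ for every $v\in\widetilde{H}^s(\Omega)$, which identifies $(\lambda_k,\phi_k)$ as Dirichlet eigenpairs of $(-\Delta)^s$ and establishes \ref{spectral prop 1}.

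Taking $v=\phi_j$ in the eigenvalue relation gives $a(\phi_j,\phi_k)=\lambda_k\delta_{jk}$, so $\{\lambda_k^{-1/2}\phi_k\}_{k\in\N}$ is orthonormal in $\widetilde{H}^s(\Omega)$; completeness follows because any $u\in\widetilde{H}^s(\Omega)$ with $a(u,\phi_k)=0$ for all $k$ also satisfies $\lambda_k\langle u,\phi_k\rangle_{L^2(\R^n)}=0$ for all $k$, hence $u=0$ by \ref{spectral prop 1}, yielding \ref{spectral prop 2}. For \ref{spectral prop 3} I use that the Riesz isomorphism $J\colon\widetilde{H}^s(\Omega)\to H^{-s}(\Omega)$ induced by $a$ together with the duality pairing in \eqref{negative norm} is an isometry, hence sends orthonormal bases to orthonormal bases. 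A direct computation gives $\langle J(\lambda_k^{-1/2}\phi_k),v\rangle=a(\lambda_k^{-1/2}\phi_k,v)=\lambda_k^{1/2}\langle\phi_k,v\rangle_{L^2(\R^n)}$ for all $v\in\widetilde{H}^s(\Omega)$, so $J(\lambda_k^{-1/2}\phi_k)$ coincides with $\lambda_k^{1/2}\phi_k$ under the $L^2$-based extension of the duality pairing, which is exactly \ref{spectral prop 3}.

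The only nonroutine input is the compactness of the embedding $\widetilde{H}^s(\Omega)\hookrightarrow L^2(\Omega)$ for all $s\in\R_+\setminus\N$ when $\Omega$ is merely bounded Lipschitz. Compact support of elements in $\widetilde{H}^s(\Omega)$ combined with the classical Rellich theorem handles $0<s<1$ directly; for $s>1$ one decomposes $s=k+\alpha$ with $k=\lfloor s\rfloor\in\N$ and $\alpha\in(0,1)$, as noted after \eqref{eq: equivalent norm on Hs}, and combines the previous case with the compactness of integer-order Sobolev embeddings (or interpolates between adjacent integer-order compactness results). Once this compactness is established, the remainder of the argument is standard Hilbert-space bookkeeping.
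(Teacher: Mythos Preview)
Your proposal is correct and follows the same overall strategy as the paper: invert the Dirichlet fractional Laplacian to obtain a compact self-adjoint positive operator on $\widetilde{L}^2(\Omega)$, apply the spectral theorem, and then transfer the resulting basis to $\widetilde{H}^s(\Omega)$ and $H^{-s}(\Omega)$ via the eigenvalue relation and the Riesz/solution isomorphism. Your execution is in fact more economical than the paper's: the paper supplements the spectral theorem with an explicit Rayleigh-quotient construction of each $\lambda_k$, proves completeness in $\widetilde{H}^s(\Omega)$ via Mazur's lemma, and builds the $H^{-s}(\Omega)$ inner product by hand through the source-to-solution map, whereas your orthogonality argument for \ref{spectral prop 2} (if $a(u,\phi_k)=0$ for all $k$ then $\langle u,\phi_k\rangle_{L^2}=0$, hence $u=0$) and your direct use of the Riesz isometry $J$ for \ref{spectral prop 3} bypass these detours without loss.
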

	
	\subsection{Very weak solutions to linear nonlocal wave equations without potential}
	\label{subsec: construction very weak without potential}
	
	The main purpose of this section is to prove the following well-posedness result.
	
	\begin{theorem}[Well-posedness of NWEQ with $q=0$]
		\label{eq: thm well-posedness q=0}
		Let $F\in L^2(0,T;H^{-s}(\Omega))$, $u_0\in \wt{L}^2(\Omega)$ and $u_1\in H^{-s}(\Omega)$. Then there exists a unique solution of
		\begin{equation}
			\label{eq: very weak sol without potential}
			\begin{cases}
				\partial_t^2u +(-\Delta)^su =F &\text{ in }\Omega_T,\\
				u=0  &\text{ in }(\Omega_e)_T,\\
				u(0)=u_0, \quad \partial_t u(0)=u_1 &\text{ in }\Omega.
			\end{cases}
		\end{equation}
		Moreover, the following continuity estimate holds
		\begin{equation}
			\label{eq: continuity estimates}
			\begin{split}
				\|u(t)\|_{L^2(\Omega)}+\left\|\partial_t u(t)\right\|_{H^{-s}(\Omega)}\leq C\big(\left\|u_0\right\|_{L^2(\Omega)}+\left\|u_1\right\|_{H^{-s}(\Omega)}+\|F\|_{L^2(0,T;H^{-s}(\Omega))}\big),
			\end{split}
		\end{equation}
		for some $C>0$ and for all $0\leq t\leq T$.
	\end{theorem}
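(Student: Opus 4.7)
The plan is to construct the very weak solution through spectral decomposition combined with a mode-wise Duhamel principle, to verify the integral identity \eqref{eq: def of very weak sol} by termwise integration by parts on the resulting scalar ODEs, and to deduce uniqueness directly from that identity.

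Let $\{\phi_k\}_{k\in\N}$ and $\{\lambda_k\}_{k\in\N}$ be the eigenfunctions and eigenvalues from Lemma~\ref{spectral lemma}. I would first expand the data as $u_0 = \sum_k \alpha_k\phi_k$, $u_1 = \sum_k \beta_k\phi_k$ and $F(t) = \sum_k F_k(t)\phi_k$, so that \ref{spectral prop 1}--\ref{spectral prop 3} give
\[
\sum_k|\alpha_k|^2 = \|u_0\|_{L^2(\Omega)}^2, \quad \sum_k\lambda_k^{-1}|\beta_k|^2 = \|u_1\|_{H^{-s}(\Omega)}^2, \quad \int_0^T\sum_k\lambda_k^{-1}|F_k(t)|^2\,dt = \|F\|_{L^2(0,T;H^{-s}(\Omega))}^2.
\]
For each $k$ I then define the explicit Duhamel solution
\[
c_k(t) \vcentcolon = \alpha_k\cos(\sqrt{\lambda_k}\,t) + \frac{\beta_k}{\sqrt{\lambda_k}}\sin(\sqrt{\lambda_k}\,t) + \frac{1}{\sqrt{\lambda_k}}\int_0^t \sin\bigl(\sqrt{\lambda_k}(t-\tau)\bigr) F_k(\tau)\,d\tau
\]
of $c_k'' + \lambda_k c_k = F_k$, $c_k(0)=\alpha_k$, $c_k'(0)=\beta_k$, and set $u(t) \vcentcolon = \sum_k c_k(t)\phi_k$. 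Elementary bounds on the oscillatory terms together with Cauchy--Schwarz on the Duhamel integral produce
\[
\sum_k|c_k(t)|^2 + \sum_k\lambda_k^{-1}|c_k'(t)|^2 \lesssim \|u_0\|_{L^2(\Omega)}^2 + \|u_1\|_{H^{-s}(\Omega)}^2 + T\|F\|_{L^2(0,T;H^{-s}(\Omega))}^2,
\]
which, via Lemma~\ref{spectral lemma}, is precisely \eqref{eq: continuity estimates}. Uniform convergence in $t$ of the partial sums of $u$ and $\partial_t u$ then places $u$ in $C([0,T];\widetilde{L}^2(\Omega))\cap C^1([0,T];H^{-s}(\Omega))$.

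To verify the very weak identity, given $G\in L^2(0,T;\widetilde{L}^2(\Omega))$ I let $v$ be the backward weak solution of \eqref{eq: weak sol} with $q=0$; expanding $G(t) = \sum_k G_k(t)\phi_k$, its mode-by-mode decomposition reads $v(t) = \sum_k v_k(t)\phi_k$ with $v_k$ solving $v_k'' + \lambda_k v_k = G_k$, $v_k(T)=v_k'(T)=0$. For each fixed $k$, both $c_k$ and $v_k$ are scalar $C^2$ functions, so two integrations by parts (in which the $\lambda_k c_k v_k$ contributions cancel after substituting the two ODEs) yield the finite-dimensional identity
\[
\int_0^T c_k(t) G_k(t)\,dt = \int_0^T F_k(t) v_k(t)\,dt + \beta_k v_k(0) - \alpha_k v_k'(0).
\]
Summing in $k$ and identifying the four resulting sums with $\int_0^T\langle u,G\rangle_{L^2(\Omega)}\,dt$, $\int_0^T\langle F,v\rangle_{L^2(\Omega)}\,dt$, $\langle u_1,v(0)\rangle$ and $\langle u_0,\partial_t v(0)\rangle_{L^2(\Omega)}$ through the isomorphisms of Lemma~\ref{spectral lemma} will deliver \eqref{eq: def of very weak sol}.

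Uniqueness then follows at once: if $u$ is a very weak solution of \eqref{eq: very weak sol without potential} with $F=u_0=u_1=0$, then \eqref{eq: def of very weak sol} reduces to $\int_0^T\langle u(t),G(t)\rangle_{L^2(\Omega)}\,dt = 0$ for every $G\in L^2(0,T;\widetilde{L}^2(\Omega))$, forcing $u\equiv 0$ by continuity in $t$. The main technical hurdle is not the existence construction itself, which is a standard Fourier-method template, but the careful bookkeeping for summability and the rigorous termwise integration by parts used to pass from the scalar ODE identities to \eqref{eq: def of very weak sol}; this will rest on uniform-in-$t$ convergence of the partial sums of $u$ in $C([0,T];\widetilde{L}^2(\Omega))$ and of $v$ in $C([0,T];\widetilde{H}^s(\Omega))$, the latter provided by the classical weak theory of \cite[Theorem~3.1]{Semilinear-nonlocal-wave-paper}.
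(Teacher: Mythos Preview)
Your proposal is correct and follows essentially the same route as the paper: spectral expansion in the Dirichlet eigenbasis, Duhamel's formula for each mode, uniform-in-$t$ summability to get the regularity and the estimate \eqref{eq: continuity estimates}, termwise integration by parts against the modes of the backward solution $v$ to obtain \eqref{eq: def of very weak sol}, and uniqueness directly from that identity. One minor correction: since $F_k,G_k\in L^2((0,T))$ only, the scalar modes $c_k,v_k$ lie in $C^1([0,T])\cap H^2((0,T))$ rather than $C^2$, but the two integrations by parts you invoke remain valid at that regularity, so the argument goes through unchanged.
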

	
	\begin{proof}

		We use the Fourier method to show the existence of a solution to \eqref{eq: very weak sol without potential}, that is we make the ansatz
		\begin{equation}
			\label{eq: guessed solution}
			u(t)=\sum_{k=1}^{\infty}c_k(t)\phi_k
		\end{equation}
		and for later convenience we set
		\[
		u_m(t)\vcentcolon=\sum_{k=1}^m c_k(t)\phi_k,
		\]
		for any $m\in \N$.
		For $u$ to satisfy \eqref{eq: very weak sol without potential} in $H^{-s}(\Omega)$, the coefficient $c_k$, $k\in\N$, needs to solve the initial value problem
		\begin{equation}
			\label{eq: initial value problem for coeff}
			\begin{cases}
				c''_k(t)+\lambda_k c_k(t)=F_k(t), \\
				c_k(0)=u_0^k, \, c'_k(0)=u_1^k
			\end{cases}
		\end{equation}
		for $0<t<T$, where we set
		\[
		u_0^k=\left\langle u_0,\phi_k\right\rangle_{L^2(\Omega)},\, u_1^k=\left\langle u_1,\phi_k\right\rangle\text{ and }F_k(t)=\left\langle F(t),\phi_k\right\rangle.
		\]
		By Duhamel's principle, for any $k\in\N$, the coefficients $c_k$ are given by
		\begin{equation}
			\label{eq: coefficients}
			\begin{split}              c_k(t)&=u_0^k\cos\big(\lambda_k^{1/2}t\big)+\lambda_k^{-1/2}u_1^k \sin\big(\lambda_k^{1/2}t\big) \\
            &\qquad+\lambda_k^{-1/2}\int_0^tF_k(\tau)\sin\big(\lambda_k^{1/2}(t-\tau)\big)\,d\tau.
			\end{split}
		\end{equation}

		\textit{Step 1.} We first show that for any $t\in [0,T]$, the series in \eqref{eq: guessed solution} converges in $\wt{L}^2(\Omega)$. By \cite[Corollary~5.10]{Brezis}, we only need to ensure that $c_k(t)\in \ell^2$. By Jensen's inequality, we may estimate
		\begin{equation}
			\label{eq: ck for uniform estimate}
			\begin{split}
				|c_k(t)|^2&\leq 3\bigg(|u_0^k|^2+\lambda_k^{-1}|u_1^k|^2+\lambda_k^{-1}\left|\int_0^t F_k(\tau)\sin\big(\lambda_k^{1/2}(t-\tau)\big)\,d\tau\right|^2\bigg)\\
				&\leq 3\bigg(|u_0^k|^2+\lambda_k^{-1}|u_1^k|^2+t\lambda_k^{-1}\int_0^t |F_k(\tau)|^2\,d\tau\bigg),
			\end{split}
		\end{equation}
		for any $k\in\N$.  As $u_0\in \wt{L}^2(\Omega)$ and $(\phi_k)_{k\in\N}$ is an orthonormal basis in $\wt{L}^2(\Omega)$, \cite[Corollary~5.10]{Brezis} implies $(u_0^k)_{k\in\N}\in \ell^2$ with
		\begin{equation}
			\label{eq: u0}
			\|u_0\|_{L^2(\Omega)}^2=\sum_{k=1}^{\infty}|u_0^k|^2.
		\end{equation}
		Similarly, we know by \eqref{eq: characterization of dual norm} that 
		$(\lambda_k^{-1/2}u_1^k)_{k\in\N}\in \ell^2$ with
		\begin{equation}
			\label{eq: u1}
			\|u_1\|_{H^{-s}(\Omega)}^2=\sum_{k=1}^{\infty}\lambda_k^{-1}|u_1^k|^2.
		\end{equation}
		On the other hand, the formula \eqref{eq: characterization of dual norm} shows that if $G\in L^2(0,T;H^{-s}(\Omega))$, then $(\lambda_k^{-1}G_k)_{k\in\N}\in L^2(0,T;\ell^2)$. Additionally, by Tonelli's theorem, the integral and sum can be exchanged so that
		\begin{equation}
			\label{eq: L2 in time l2 space norm}
			\|G\|_{L^2(0,T;H^{-s}(\Omega))}^2=\|\lambda_k^{-1/2}G_k\|_{L^2(0,T;\ell^2)}^2=\sum_{k=1}^{\infty}\lambda_k^{-1}\int_0^T |G_k(t)|^2\,dt.
		\end{equation}
		This estimate can be applied to $F\in L^2(0,T;H^{-s}(\Omega))$. Hence, to sum up, we have
		\begin{equation}
			\label{eq: uniform estimate for coeff}
			\|c_k(t)\|^2_{\ell^2}\leq 3\big( \|u_0\|_{L^2(\Omega)}^2+\|u_1\|_{H^{-s}(\Omega)}^2+T\|F\|_{L^2(0,T;H^{-s}(\Omega))}^2\big).
		\end{equation}
		Now, again invoking \cite[Corollary~5.10]{Brezis} and Lemma~\ref{spectral lemma}, we deduce \eqref{eq: guessed solution} converges in $\wt{L}^2(\Omega)$ for any $t\in [0,T]$
		and 
		\begin{equation}
			\label{eq: L2 norm of sol at fixed time}
			\|u(t)\|_{L^2(\Omega)}^2=\|c_k(t)\|_{\ell^2}^2\leq 3\big(\|u_0\|_{L^2(\Omega)}^2+\|u_1\|_{H^{-s}(\Omega)}^2+T\|F\|_{L^2(0,T;H^{-s}(\Omega))}^2\big).
		\end{equation}
		
		\medskip
		
		\textit{Step 2.} We first show that $u\in C([0,T];\wt{L}^2(\Omega))$. To see this, it is enough to show that $u_m\in C([0,T];\wt{L}^2(\Omega))$ for $m\in\N$ and $u_m\to u$ in $\wt{L}^2(\Omega)$ as $m\to\infty$ uniformly in $0\leq t\leq T$. Note that we have
		\[
		\left\|u_m(t)-u_m(t')\right\|_{L^2(\Omega)}\leq \sum_{k=1}^m \left|c_k(t)-c_k(t')\right|,
		\]
		for $t,t'\in [0,T]$ and so $u_m\in C([0,T];\wt{L}^2(\Omega))$ as long as $c_k\in C([0,T])$. The first two terms of $c_k$ (see \eqref{eq: coefficients}) are continuous, hence it only remains to show that 
		\begin{equation}
			\label{eq: def of dk}
			d_k(t)\vcentcolon =\lambda_k^{-1/2}\int_0^t F_k(\tau)\sin\big(\lambda_k^{1/2}(t-\tau)\big)\,d\tau\in C([0,T]).
		\end{equation}
		Let us suppose that $t\geq t'$. Then we may calculate
		\begin{equation}
			\label{eq: continuity of dk}
			\begin{split}
				&\quad \, \left|d_k(t)-d_k(t')\right| \\
				&=\lambda_k^{-1/2}\left|\int_0^t F_k(\tau)\sin\big(\lambda_k^{1/2}(t-\tau)\big)\,d\tau-\int_0^{t'} F_k(\tau)\sin\big(\lambda_k^{1/2}(t'-\tau)\big)\,d\tau\right|\\
				&\leq \lambda_k^{-1/2}\int_{t'}^t |F_k(\tau)|\big|\sin\big(\lambda_k^{1/2}(t-\tau)\big)\big|\,d\tau\\
				&\quad \, +\lambda_k^{-1/2}\int_0^{t'}|F_k(\tau)|\big|\sin\big(\lambda_k^{1/2}(t'-\tau)\big)-\sin\big(\lambda_k^{1/2}(t-\tau)\big)\big|\,d\tau\\
				&\leq \lambda_k^{-1/2}\int_{t'}^t |F_k(\tau)|\,d\tau\\
				&\quad \, +\lambda_k^{-1/2}\int_0^{t'}|F_k(\tau)|\big|\sin\big(\lambda_k^{1/2}(t'-\tau)\big)-\sin\big(\lambda_k^{1/2}(t-\tau)\big)\big|\,d\tau\\
				&\leq |t-t'|^{1/2}\bigg(\int_{t'}^t \lambda_k^{-1}|F_k(\tau)|^2\,d\tau\bigg)^{1/2}\\
				&\quad \,+\bigg(\int_0^{t'}\lambda_k^{-1}|F_k(\tau)|^2\,d\tau\bigg)^{1/2}\bigg(\int_0^{t'}\big|\sin\big(\lambda_k^{1/2}(t'-\tau)\big)-\sin\big(\lambda_k^{1/2}(t-\tau)\big)\big|^2\,d\tau\bigg)^{1/2}\\
				&\leq |t-t'|^{1/2}\|F\|_{L^2(0,T;H^{-s}(\Omega))}\\
				&\quad\, +\|F\|_{L^2(0,T;H^{-s}(\Omega))}\bigg(\int_0^{t'}|\sin(\lambda_k^{1/2}(t'-\tau))-\sin(\lambda_k^{1/2}(t-\tau))|^2\,d\tau\bigg)^{1/2}.
			\end{split}
		\end{equation}
		In the first inequality, we wrote
		\[
		\sin\big(\lambda_k^{1/2}(t'-\tau)\big)=\big(\sin\big(\lambda_k^{1/2}(t'-\tau)\big)-\sin\big(\lambda_k^{1/2}(t-\tau)\big)\big)+\sin\big(\lambda_k^{1/2}(t'-\tau)\big)
		\]
		and in the fourth inequality used \eqref{eq: L2 in time l2 space norm}. Now, let $\epsilon>0$ and choose first $\rho >0$ such that $\rho^{1/2}\|F\|_{L^2(0,T;H^{-s}(\Omega))}<\epsilon/2$. Then choose $\delta>0$ such that
		\[
		\|F\|_{L^2(0,T;H^{-s}(\Omega))}T^{1/2}\delta<\epsilon/2.
		\]
		By uniform continuity of the sine function, we can find $\eta>0$ such that $|\sin x-\sin y|<\delta$, whenever $|x-y|<\eta$. Now, we set 
		\[
		\mu\vcentcolon=\min(\rho,\eta/\lambda_k^{1/2}).
		\]
		The above choices show that if $|t-t'|<\mu$ and $t\geq t'$, then 
		\[
		\begin{split}
			\left|d_k(t)-d_k(t')\right|&\leq \rho^{1/2}\|F\|_{L^2(0,T;H^{-s}(\Omega))}+\|F\|_{L^2(0,T;H^{-s}(\Omega))}\delta (t')^{1/2}\\
			&<\epsilon/2+\|F\|_{L^2(0,T;H^{-s}(\Omega))}\delta T^{1/2}\\
			&<\epsilon.
		\end{split}
		\]
		Interchanging the roles of $t$ and $t'$ shows that $d_k$ is uniformly continuous because the constant does not depend on the particular point $t$ or $t'$ (but the choice of $\mu$ depends on $k$).
        Hence, we have shown that $u_m\in C([0,T];\widetilde{L}^2(\Omega))$. 
		
		Next, we prove that $u_m $ converges uniformly to $u$ in $\wt{L}^2(\Omega)$ on $[0,T]$ as $m\to \infty$. Let $\ell\geq m$, then by \eqref{eq: ck for uniform estimate} we deduce that
		\[
		\begin{split}
			\left\|u_\ell(t)-u_m(t)\right\|_{L^2(\Omega)}^2&=\bigg\|\sum_{k=m+1}^\ell c_k(t)\phi_k\bigg\|_{L^2(\Omega)}^2 \\
			&=\sum_{k=m+1}^\ell \left|c_k(t)\right|^2\\
			&\leq C\sum_{k=m+1}^\ell \bigg(\left|u_0^k\right|^2+\lambda_k^{-1}\left|u_1^k\right|^2+T\lambda_k^{-1}\int_0^T \left|F_k(\tau)\right|^2\,d\tau\bigg),
		\end{split}
		\]
		for some constant $C>0$.
		Passing to the limit $\ell\to\infty$ gives
		\[
		\left\|u(t)-u_m(t)\right\|_{L^2(\Omega)}^2\leq  C\sum_{k=m+1}^{\infty} \bigg( \left|u_0^k \right|^2+\lambda_k^{-1}\left|u_1^k\right|^2+T\lambda_k^{-1}\int_0^T \left|F_k(\tau)\right|^2\,d\tau\bigg)
		\]
		for any $m\in\N$. The right-hand side is independent of $t\in [0,T]$ and by summability of the right-hand side (see \eqref{eq: u0}, \eqref{eq: u1} and \eqref{eq: L2 in time l2 space norm}) it needs to go to zero as $m$ tends to infinity. Thus, the convergence $u_m\to u$ in $\wt{L}^2(\Omega)$ as $m\to \infty$ is uniform in $t\in [0,T]$.\\
		
		\textit{Step 3.} Let us show that $u\in C^1([0,T];H^{-s}(\Omega))$. We first establish that $u_m\in C^1([0,T];H^{-s}(\Omega))$ for any $m\in\N$. Formally, by differentiating $c_k$, one may compute
		\begin{equation}
			\label{eq: chain rule}
			\begin{split}
				c'_k(t)&=-\lambda_k^{1/2}u_0^k\sin\big(\lambda_k^{1/2}t\big)+u_1^k\cos\big(\lambda_k^{1/2}t\big)+\int_0^t F_k(\tau)\cos\big(\lambda_k^{1/2}(t-\tau)\big)\,d\tau,
			\end{split}
		\end{equation}
		for any $k\in\N$. Taking derivatives for the first two terms does not cause any difficulty, but for the integral term, we need to justify it. Using the definition of $d_k$ from \eqref{eq: def of dk}, for any $t\in [0,T]$ and $h>0$ such that $t+h\in [0,T]$, one can compute
		\[
		\begin{split}
			&\quad \, \bigg|\frac{d_k(t+h)-d_k(t)}{h}-\int_0^tF_k(\tau)\cos\big(\lambda_k^{1/2}(t-\tau)\big)\,d\tau\bigg|\\
			&=\bigg|\int_t^{t+h}\lambda_k^{-1/2}F_k(\tau)\frac{\sin\big(\lambda_k^{1/2}(t+h-\tau)\big)}{h}\,d\tau\\
			&\quad \,\,   +\int_0^t\lambda_k^{-1/2}F_k(\tau) \\
			&\qquad \quad  \cdot  \bigg(\frac{\sin\big(\lambda_k^{1/2}(t+h-\tau)\big)-\sin\big(\lambda_k^{1/2}(t-\tau)\big)}{h}-\lambda_k^{1/2}\cos\big(\lambda_k^{1/2}(t-\tau)\big)\bigg)\,d\tau\bigg|\\
			&\leq \int_t^{t+h}\lambda_k^{-1/2}|F_k(\tau)|\bigg|\frac{\sin\big(\lambda_k^{1/2}(t+h-\tau)\big)}{h}\bigg|\,d\tau\\
			&\quad \,  +\int_0^t\lambda_k^{-1/2}\left|F_k(\tau)\right|\\
			&\qquad \quad \cdot \bigg|\frac{\sin\big(\lambda_k^{1/2}(t+h-\tau)\big)-\sin\big(\lambda_k^{1/2}(t-\tau)\big)}{h}-\lambda_k^{1/2}\cos\big(\lambda_k^{1/2}(t-\tau)\big)\bigg|\,d\tau\\
			& =\vcentcolon I_h+II_h.
		\end{split}
		\]
		Next, we show that both expressions $I_h$ and $II_h$ vanish as $h\to 0$.

		For $I_h$, by the triangle inequality, we have
		\[
		\begin{split}
			I_h&\leq \int_t^{t+h}\lambda_k^{-1/2}\left|F_k(\tau)\right|\bigg|\frac{\sin\big(\lambda_k^{1/2}(t+h-\tau)\big)-\sin\big(\lambda_k^{1/2}(t-\tau)\big)}{h}\bigg|\,d\tau\\
			&\quad \,  +\underbrace{\int_t^{t+h}\lambda_k^{-1/2}\left|F_k(\tau)\right|\bigg|\frac{\sin\big(\lambda_k^{1/2}(t-\tau)\big)}{h}\bigg| d\tau}_{=\frac{1}{h}\int_t^{t+h}\lambda_k^{-1/2}\left|F_k(\tau)\right||\sin(\lambda_k^{1/2}(t-\tau))| d\tau}\\
			&=\int_t^{t+h}\lambda_k^{-1/2}\left|F_k(\tau)\right|\bigg|\frac{\sin\big(\lambda_k^{1/2}(t+h-\tau)\big)-\sin\big(\lambda_k^{1/2}(t-\tau)\big)}{h}\bigg|\,d\tau+o(1)\\
			&=\int_t^{t+h}\left|F_k(\tau)\right|\big|\cos\big(\lambda_k^{1/2}(\eta-\tau)\big)\big|d\tau+o(1)\\
			&\leq \int_t^{t+h}\left|F_k(\tau)\right|\,d\tau+o(1)\\
			&=o(1)
		\end{split}
		\]
		as $h\to 0$. In the first equality we used that $F_k\in L^2((0,T))$ and Lebesgue's differentiation theorem implies 
		\[
		\frac{1}{h}\int_t^{t+h}\lambda_k^{-1/2}\left|F_k(\tau)\right|\big|\sin(\lambda_k^{1/2}(t-\tau))\big| d\tau \to 0 \text{ as }h\to 0.
		\] 
		In the second equality we applied the mean value theorem, where $\eta\in (t,t+h)$, and in the last equality the absolute continuity of the Lebesgue integral.

		On the other hand, the fact that $II_h\to 0$ as $h\to 0$ is a simple application of Lebesgue's dominated convergence theorem. The same argument works out for $h<0$ and hence the $d_k$ is differentiable with the derivative
		\begin{equation}
			\label{eq: derivative of integral term}
			d'_k(t)=\int_0^tF_k(\tau)\cos\big(\lambda_k^{1/2}(t-\tau)\big)\,d\tau.
		\end{equation}
		Hence, we have proved the formula \eqref{eq: chain rule}.

		It remains to show that $u'_m\in C([0,T];H^{-s}(\Omega))$, but by the same argument as above it is enough to establish $c'_k\in C([0,T])$. The first two terms in $c'_k$ are clearly continuous and hence we only need to show the continuity of $d'_k$ given by the formula \eqref{eq: derivative of integral term}. Let $t\geq t'$. By the same computation as in \eqref{eq: continuity of dk} up to replacing $\sin$ by $\cos$ and forgetting the prefactor $\lambda_k^{-1/2}$, we have
		\[
		\begin{split}
			\left|d'_k(t)-d'_k(t')\right|&\leq \int_{t'}^t |F_k(\tau)|\,d\tau \\
			&\quad \, +\int_0^{t'}\left|F_k(\tau)\right|\big|\cos\big(\lambda_k^{1/2}(t'-\tau)\big)-\cos\big(\lambda_k^{1/2}(t-\tau)\big)\big|d\tau.
		\end{split}
		\]
		Now, let $\epsilon>0$. As $F_k\in L^2((0,T))$, by the absolute continuity of the Lebesgue integral, we can find $\delta>0$ such that the first term is smaller than $\epsilon/2$, whenever $|t-t'|<\delta$. On the other hand, we can find a $\rho>0$ such that there holds
		\[
		|\cos x-\cos y|<\epsilon/2\|F_k\|_{L^1((0,T))}
		\]
		whenever $|x-y|<\rho$. Let 
		\[
		\mu=\min(\delta,\rho/\lambda_k^{1/2}).
		\]
		Hence, if $|t-t'|<\mu$, then we have
		\[
		\big|\lambda_k^{1/2}(t'-\tau)-\lambda_k^{1/2}(t-\tau)\big|=\lambda_k^{1/2}|t-t'|<\rho
		\]
		and hence
		\[
		\left|d'_k(t)-d'_k(t')\right|<\epsilon/2+ \frac{\epsilon}{2\|F_k\|_{L^1((0,T))}}\int_0^{t'}|F_k(\tau)|\,d\tau<\epsilon.
		\]
		The very same argument holds when $t\leq t'$, and as all parameters $\delta,\rho,\mu$ are independent of $t$ and $t'$, we have shown that $d'_k$ are uniformly continuous on $[0,T]$. Hence, we have $u_m\in C^1([0,T];H^{-s}(\Omega))$ for all $m\in\N$.
		
		Now, by Lemma~\ref{spectral lemma} and the same arguments as in \eqref{eq: ck for uniform estimate}, we get 
		\begin{equation}
			\label{eq: upper bound for derivative of partial sums}
			\begin{split}
				\left\|u'_m(t)\right\|^2_{H^{-s}(\Omega)}&=\bigg\|\sum_{k=1}^m c'_k(t)\phi_k\bigg\|_{H^{-s}(\Omega)}^2\\
				&=\bigg\|\sum_{k=1}^m \lambda_k^{-1/2}c'_k(t)(\lambda_k^{1/2}\phi_k)\bigg\|_{H^{-s}(\Omega)}^2\\
				&=\sum_{k=1}^m\lambda_k^{-1}|c'_k(t)|^2\\
				&\leq 3\sum_{k=1}^m\lambda_k^{-1}\bigg(\lambda_k|u_0^k|^2+|u_1^k|^2+\bigg(\int_0^t|F_k(\tau)|\,d\tau\bigg)^2\bigg)\\
				&\leq 3\sum_{k=1}^m\bigg(|u_0^k|^2+\lambda_k^{-1}|u_1^k|^2+T\lambda_k^{-1}\int_0^T|F_k(\tau)|^2\,d\tau\bigg).
			\end{split}
		\end{equation}
		Using \eqref{eq: u0}, \eqref{eq: u1} and \eqref{eq: L2 in time l2 space norm}, this implies
		\begin{equation}
			\label{eq: uniform upper bound for derivative of partial sums}
			\left\|u'_m(t)\right\|_{H^{-s}(\Omega)}^2\leq 3 \big(\|u_0\|_{L^2(\Omega)}^2+\|u_1\|_{H^{-s}(\Omega)}^2+T\|F\|_{L^2(0,T;H^{-s}(\Omega)}^2\big).
		\end{equation}
		Furthermore, we have
		\begin{equation}
			\label{eq: uniform conv derivative}
			\begin{split}
				\left\|u'_\ell(t)-u'_m(t)\right\|^2_{H^{-s}(\Omega)}&= \sum_{k=m+1}^\ell\lambda_k^{-1}|c'_k(t)|^2\leq \sum_{k=m+1}^{\infty}\lambda_k^{-1}|c'_k(t)|^2
			\end{split}
		\end{equation}
		for all $\ell \geq m$. Observing that the right-hand side goes to zero as $m\to\infty$, we see that $( u_m(t))_{m\in\N}\subset H^{-s}(\Omega)$ is a Cauchy sequence in $H^{-s}(\Omega)$ and therefore converges to some unique limit $w(t) \in H^{-s}(\Omega)$. Passing to the limit $\ell\to\infty$ in \eqref{eq: uniform conv derivative} and using the estimates from \eqref{eq: upper bound for derivative of partial sums}, we get
		\[
		\left\|w(t)-u'_m(t)\right\|_{H^{-s}(\Omega)}^2\leq 3\sum_{k=m+1}^{\infty}\bigg(|u_0^k|^2+\lambda_k^{-1}|u_1^k|^2+T\lambda_k^{-1}\int_0^T|F_k(\tau)|^2\,d\tau\bigg)
		\]
		for any $m\in\N$. The sum on the right-hand side is independent of $t$ and hence the convergence $u'_m\to w$ in $H^{-s}(\Omega)$ as $m\to\infty$ is uniform in $t\in[0,T]$. It is well-known that this implies $u\in C^1([0,T];H^{-s}(\Omega))$ with $u'=w$. Furthermore, by \eqref{eq: uniform upper bound for derivative of partial sums} there holds
		\begin{equation}
			\label{eq: estimate derivative}
			\|u'\|_{L^{\infty}(0,T;H^{-s}(\Omega))}\leq C \left(\|u_0\|_{L^2(\Omega)}+\|u_1\|_{H^{-s}(\Omega)}+\|F\|_{L^2(0,T;H^{-s}(\Omega)}\right),
		\end{equation}
		for some $C>0$. Notice that this estimate, together with  \eqref{eq: L2 norm of sol at fixed time} establishes \eqref{eq: continuity estimates}.\\
		
		\noindent\textit{Step 4.} In this step, we show that $u$ is in fact a solution of \eqref{eq: very weak sol without potential}. First, let us note that by formally applying the Leibniz rule and \eqref{eq: coefficients}, one has
		\begin{equation}
			\label{eq: second derivative}
			\begin{split}
				c''_k(t)&=-\lambda_ku_0^k \cos\big(\lambda_k^{1/2}t\big)-\lambda_k^{1/2}u_1^k\sin\big(\lambda_k^{1/2}t\big)+F_k(t)\\
				&\quad \,-\lambda_k^{1/2}\int_0^t F_k(\tau)\sin\big(\lambda_k^{1/2}(t-\tau)\big)\,d\tau\\
				&=-\lambda_k c_k(t)+F_k(t).
			\end{split}
		\end{equation}
		Thus, $c_k$ indeed solves  \eqref{eq: initial value problem for coeff}. To see that the first equality sign in formula \eqref{eq: second derivative} holds, it is enough to show that $d'_k$ is differentiable with derivative 
		\begin{equation}
			\label{eq: second derivative integral term}
			d''_k(t)=F_k(t)-\lambda_k^{1/2}\int_0^t F_k(\tau)\sin\big(\lambda_k^{1/2}(t-\tau)\big)\,d\tau.
		\end{equation}
		We can repeat the same computation as for the first derivative. This time we have
		\[
		\begin{split}
			&\quad \, \bigg|\frac{d'_k(t+h)-d'_k(t)}{h}-F_k(t)+\lambda_k^{1/2}\int_0^tF_k(\tau)\sin\big(\lambda_k^{1/2}(t-\tau)\big)\,d\tau\bigg|\\
			&\leq \int_t^{t+h}\bigg|\frac{F_k(\tau)\cos\big(\lambda_k^{1/2}(t+h-\tau)\big)-F_k(t)}{h}\bigg|\,d\tau\\
			&\quad \,  +\int_0^t|F_k(\tau)|\bigg|\frac{\cos\big(\lambda_k^{1/2}(t+h-\tau)\big)-\cos\big(\lambda_k^{1/2}(t-\tau)\big)}{h}-\lambda_k^{1/2}\sin\big(\lambda_k^{1/2}(t-\tau)\big)\bigg|\,d\tau\\
			& = \vcentcolon III_h+IV_h.
		\end{split}
		\]
		The second term $IV_h$ again goes to zero by Lebesgue's dominated convergence theorem. For the first term $III_h$, we proceed similarly to $I$ above. This gives
		\[
		\begin{split}
			III_h&\leq \int_t^{t+h}|F_k(\tau)|\bigg|\frac{\cos\big(\lambda_k^{1/2}(t+h-\tau)\big)-\cos\big(\lambda_k^{1/2}(t-\tau)\big)}{h}\bigg|\,d\tau\\
			&\quad \, +\int_t^{t+h}\bigg|\frac{F_k(\tau)\cos\big(\lambda_k^{1/2}(t-\tau)\big)-F_k(t)}{h}\bigg|\,d\tau.
		\end{split}
		\]
		The second term goes to zero as $h\to 0$ by Lebesgue's differentiation theorem, and for the first term, one can use the mean value theorem and the absolute continuity of the integral to find $III_h\to 0$ as $h\to 0$. This proves \eqref{eq: second derivative integral term} and hence \eqref{eq: second derivative}.
		From the differential equation for $c_k$ we get $c''_k\in L^2((0,T))$. Note that as $F$ is not (in general) continuous, we generally do not have $c_k\in C^2([0,T])$. But in fact $c'_k\in H^1((0,T))$ and the fundamental theorem of calculus imply $c_k \in C^{1,1/2}([0,T])$. 
		
		Now, we wish to show that $u$ is a solution in the sense of Definition~\ref{def: very weak sol}. To this end, let us assume that $G\in L^2(0,T;\widetilde{L}^2(\Omega))$ and $v\in C([0,T];\widetilde{H}^s(\Omega))\cap C^1([0,T];\wt{L}^2(\Omega))$ is the unique solution to 
		\begin{equation}
			\label{eq: test function very weak sol}
			\begin{cases}
				\partial_t^2v +(-\Delta)^sv =G &\text{ in }\Omega_T,\\
				v=0  &\text{ in }(\Omega_e)_T,\\
				v(T)= \partial_t v(T)=0 &\text{ in }\Omega.
			\end{cases}
		\end{equation}
		Note that from this equation we also have $\partial_t^2 v\in L^2(0,T;H^{-s}(\Omega))$. By Lemma~\ref{spectral lemma} we can write
		\begin{equation}
			\label{eq: expansion of v}
			v(t)=\sum_{k=1}^{\infty}\alpha_k(t)\phi_k,
		\end{equation}
		where $\alpha_k=\langle v(t),\phi_k\rangle_{L^2(\Omega)}$. Furthermore, note that by our choice of $\phi_k$ and the inner product on $H^{-s}(\Omega)$ (see \eqref{eq: inner product on dual space}), there holds
		\begin{equation}
			\label{eq: coefficients of v}
			\alpha_k=\lambda_k^{-1/2}\big\langle v(t),\lambda_k^{-1/2}\phi_k\big\rangle_{\widetilde{H}^s(\Omega)}=\lambda_k^{1/2}\big\langle v(t),\lambda_k^{1/2}\phi_k\big\rangle_{H^{-s}(\Omega)}.
		\end{equation}
		The last equality is shown in \eqref{eq: expression for Gk}.
		For later convenience, we let $v_m$ be defined via
		\[
		v_m(t)=\sum_{k=1}^m \alpha_k(t)\phi_k.
		\]
		Moreover, we know that:
		\begin{enumerate}[(a)]
			\item\label{prop 1 for v} For any $t\in[0,T]$ one has $v_m(t)\to v(t)$ in $\widetilde{H}^s(\Omega)$ as $m\to\infty$.
			\item\label{prop 2 for v} There holds $\alpha_k\in C^1([0,T])\cap H^2((0,T))$ for any $k\in\N$.
			\item\label{prop 3 for v} For any $k\in\N$ the functions $\alpha_k$ solve
			\begin{equation}
				\label{eq: ODE for alpha k}
				\begin{cases}
					\alpha''_k(t)+\lambda_k \alpha_k(t)=G_k(t)\\
					\alpha_k(T)=\alpha'_k(T)=0,
				\end{cases}
			\end{equation}
			for $0<t<T$, where $G_k(t)=\langle G(t),\phi_k\rangle_{L^2(\Omega)}$.
		\end{enumerate}
		
		In fact, \ref{prop 1 for v} follows from Lemma~\ref{spectral lemma} and the regularity of $v$. The regularity $\alpha_k\in C^1([0,T])$ in \ref{prop 2 for v} follows from $v\in C^1([0,T];\wt{L}^2(\Omega))$. The claim that $\alpha_k\in H^2((0,T))$ can be seen as follows. First of all, $v''\in L^2(0,T;H^{-s}(\Omega))$ implies 
		\[
		-\int_0^T v'(t)\eta'(t)\,dt=\int_0^Tv''(t)\eta(t)\,dt\text{ in } H^{-s}(\Omega)
		\]
		for all $\eta\in C_c^{\infty}((0,T))$. Acting on this identity by $w\mapsto \lambda_k^{1/2}\langle w,\lambda_k^{1/2}\phi_k\rangle_{H^{-s}(\Omega)}\in (H^{-s}(\Omega))^*$ gives
		\[
		-\int_0^T\lambda_k^{1/2}\langle v'(t),\lambda_k^{1/2}\phi_k\rangle_{H^{-s}(\Omega)}\eta'(t)\,dt=\int_0^T \lambda_k^{1/2}\langle v''(t),\lambda_k^{1/2}\phi_k\rangle_{H^{-s}(\Omega)}\eta(t)\,dt.
		\]
		Now, the first factor on the left-hand side is nothing else than $\alpha'_k$, and thus
		\[
		\alpha''_k(t)=\lambda_k^{1/2}\langle v''(t),\lambda_k^{1/2}\phi_k\rangle_{H^{-s}(\Omega)}\in L^2((0,T)).
		\]
		This shows that $\alpha_k\in H^2((0,T))$ for $k\in\N$. The endpoint conditions in \ref{eq: ODE for alpha k} follow from $v(T)=v'(T)=0$. From the previous calculation, \eqref{eq: inner product on dual space} and \eqref{eq: test function very weak sol}, we can compute
		\[
		\begin{split}
			\alpha''_k(t)&=\lambda_k^{1/2}\big\langle v''(t),\lambda_k^{1/2}\phi_k\big\rangle_{H^{-s}(\Omega)}\\
			&=\lambda_k^{1/2}\big\langle v''(t)-G(t),\lambda_k^{1/2}\phi_k\big\rangle_{H^{-s}(\Omega)}+\lambda_k^{1/2}\big\langle G(t),\lambda_k^{1/2}\phi_k\big\rangle_{H^{-s}(\Omega)}\\
			&=\big\langle S(v''(t)-G(t)),S(\lambda_k\phi_k)\big\rangle_{\widetilde{H}^s(\Omega)}+\left\langle S(G(t)),S(\lambda_k\phi_k)\right\rangle_{\widetilde{H}^s(\Omega)}\\
			&=-\big\langle v(t),\phi_k\big\rangle_{\widetilde{H}^s(\Omega)}+\left\langle S(G(t)),\phi_k\right\rangle_{\widetilde{H}^s(\Omega)}\\
			&=-\lambda_k\langle v(t),\phi_k\rangle_{L^2(\Omega)}+\langle G(t),\phi_k\rangle_{L^2(\Omega)}\\
			&=-\lambda_k\alpha_k(t)+G_k(t),
		\end{split}
		\]
		where $S\colon H^{-s}(\Omega)\to\widetilde{H}^s(\Omega)$ is the source-to-solution map for the Dirichlet problem of the fractional Laplacian $(-\Delta)^s$ (see Appendix~ \ref{sec: proof of spectral lemma} for more details). This verifies that $\alpha_k$ satisfies \ref{prop 3 for v} . By $c_k\in C^1([0,T])\cap H^2((0,T))$, \eqref{eq: initial value problem for coeff}, \ref{prop 2 for v} and \ref{prop 3 for v} we may calculate
		\begin{equation}
			\label{eq: first pI}
			\begin{split}
				\int_0^T c''_k \alpha_k\,dt&=-\int_0^T c'_k\alpha'_k\,dt+c'_k(T)\alpha_k(T)-c'_k(0)\alpha_k(0)\\
				&=-\int_0^T c'_k\alpha'_k\,dt-u_1^k\alpha_k(0)
			\end{split}
		\end{equation}
		and
		\begin{equation}
			\label{eq: second pI}
			\begin{split}
				\int_0^T c_k \alpha''_k\,dt&=-\int_0^T c'_k\alpha'_k\,dt+c_k(T)\alpha'_k(T)-c_k(0)\alpha'_k(0)\\
				&=-\int_0^T c'_k\alpha'_k\,dt-u_0^k\alpha'_k(0).
			\end{split}
		\end{equation}
		Inserting \eqref{eq: second pI} into \eqref{eq: first pI} yields
		\begin{equation}
			\int_0^T c''_k \alpha_k\,dt=\int_0^T c_k\alpha''_k\,dt+u_0^k\alpha'_k(0)-u_1^k\alpha_k(0).
		\end{equation}
		Hence by \eqref{eq: initial value problem for coeff} and \ref{prop 3 for v}, we get
		\begin{equation}
			\label{eq: eq for coefficients well-posedness}
			\int_0^T (-\lambda_kc_k+F_k) \alpha_k\,dt=\int_0^T c_k(-\lambda_k\alpha_k+G_k)\,dt+u_0^k\alpha'_k(0)-u_1^k\alpha_k(0),
		\end{equation}
		or equivalently
		\begin{equation}
			\label{eq: integration by parts formula for coeff}
			\int_0^T F_k \alpha_k\,dt=\int_0^T c_kG_k\,dt+u_0^k\alpha'_k(0)-u_1^k\alpha_k(0).
		\end{equation}
		Summing this identity from $k=1$ to $k=N$ gives
		\begin{equation}\label{eq: integration by parts formula}
			\begin{split}
				\int_0^T \big\langle F^{(N)},v_N \big\rangle dt&=\int_0^T \big\langle u_N,G^{(N)}\big\rangle_{L^2(\Omega)}\,dt \\
				&\quad \, +\big\langle u_0^{(N)},v'_N(0)\big\rangle_{L^2(\Omega)}-\big\langle u_1^{(N)},v_N(0)\big\rangle,
			\end{split}
		\end{equation}
		where we set
		\[
		F^{(N)}=\sum_{k=1}^N F_k\phi_k,\quad G^{(N)}=\sum_{k=1}^N G_k\phi_k,\quad u_j^{(N)}=\sum_{k=1}^N u_j^k \phi_k
		\]
		for $j=0,1$. That  \eqref{eq: integration by parts formula for coeff} and \eqref{eq: integration by parts formula} are equivalent can be seen by Lemma~\ref {spectral lemma}. Next, note that $G\in L^2(0,T;\widetilde{L}^2(\Omega))$ and Lebesgue's dominated convergence theorem together with Parseval's identity ensure 
		\[
		\begin{split}
			\big\|G^{(N)}(t)\big\|_{L^2(\Omega)}^2&=\sum_{k=1}^N \big|\langle G,\phi_k \rangle_{L^2(\Omega)}\big|^2\\
			&\leq \sum_{k=1}^{\infty}\big|\langle G,\phi_k \rangle_{L^2(\Omega)}\big|^2=\|G(t)\|_{L^2(\Omega)}^2,
		\end{split}
		\]
		  which gives
		\[
		G^{(N)}\to G\text{ in }L^2(0,T;\widetilde{L}^2(\Omega))
		\]
		as $N\to\infty$. To see that $F^{(N)}\to F$ in $L^2(0,T;H^{-s}(\Omega))$, let us first observe that
		\begin{equation}
			\label{eq: identity for FN}
			\begin{split}
				F_k\phi_k&=\left\langle F,\phi_k \right\rangle \phi_k\\
				&=\langle SF,\phi_k\rangle_{\widetilde{H}^s(\Omega)}\phi_k\\
				&=\langle SF,S(\lambda_k\phi_k)\rangle_{\widetilde{H}^s(\Omega)}\phi_k\\
				&=\left\langle F,\lambda_k\phi_k\right\rangle_{H^{-s}(\Omega)}\phi_k\\
				&=\big\langle F,\lambda_k^{1/2}\phi_k\big\rangle_{H^{-s}(\Omega)}\lambda_k^{1/2}\phi_k.
			\end{split}
		\end{equation}
		As $\big( \lambda_k^{1/2}\phi_k\big)_{k\in\N}$ is an orthonormal basis in $H^{-s}(\Omega)$, we deduce from \eqref{eq: identity for FN} that there holds
		\[
		F^{(N)}(t)\to F(t)\text{ in }H^{-s}(\Omega)
		\]
		as $N\to\infty$. Thus, using Lebesgue's dominated convergence theorem and Parseval's identity for $G$, we get
		\[
		F^{(N)}\to F\text{ in }L^2(0,T;H^{-s}(\Omega))
		\]
		as $N\to\infty$. So, we can finally pass to the limit in \eqref{eq: integration by parts formula} to obtain
		\[
		\int_0^T\langle F(t),v(t)\rangle\,dt=\int_0^T \langle u(t),G(t)\rangle_{L^2(\Omega)}\,dt+\left\langle u_0,v'(0)\right\rangle_{L^2(\Omega)}-\left\langle u_1,v(0)\right\rangle.
		\]
		This establishes that $u$ is a solution to \eqref{eq: very weak sol without potential}. \\
		
		\noindent\textit{Step 5.} In this final step, we show that the constructed solution $u$ is unique. Suppose that $\widetilde{u}$ is another solution, then $U=u-\widetilde{u}$ is a solution to 
		\begin{equation}
			\begin{cases}
				\partial_t^2U +(-\Delta)^sU =0 &\text{ in }\Omega_T,\\
				U=0  &\text{ in }(\Omega_e)_T,\\
				U(0)=\partial_t U(0)=0 &\text{ in }\Omega.
			\end{cases}
		\end{equation}
		By Definition~\ref{def: very weak sol} this means
		\[
		\int_0^T \langle U(t),G(t)\rangle_{L^2(\Omega)}\,dt=0
		\]
		for all $G\in L^2(\Omega_T)$, but this clearly implies $U=0$ and hence $u=\widetilde{u}$.
	\end{proof}
	
	\subsection{Very weak solutions to linear nonlocal wave equations with potential}
	\label{subsec: Very weak solutions with potential}
	
	The purpose of this section is to extend the well-posedness theory of equation \eqref{eq: very weak sol without potential} to linear NWEQs with a nonzero potential.
	
	\begin{theorem}[Well-posedness nonlocal wave equation with potential]
		\label{eq: thm well-posedness with q nonzero}
		Let $F\in L^2(0,T;H^{-s}(\Omega))$, $u_0\in \wt{L}^2(\Omega)$ and $u_1\in H^{-s}(\Omega)$. Furthermore, assume that $q\in L^p(\Omega)$ with $p$ satisfying the restriction \eqref{eq: restrictions on p}.
		Then the problem
		\begin{equation}
			\label{eq: very weak sol with potential}
			\begin{cases}
				\partial_t^2u +(-\Delta)^su+qu =F &\text{ in }\Omega_T,\\
				u=0  &\text{ in }(\Omega_e)_T,\\
				u(0)=u_0, \quad \partial_t u(0)=u_1 &\text{ in }\Omega
			\end{cases}
		\end{equation}
		has a unique very weak solution $u\in C([0,T];\wt{L}^2(\Omega))\cap C^1([0,T];H^{-s}(\Omega))$. 
	\end{theorem}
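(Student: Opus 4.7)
The plan is a Banach fixed-point argument that treats the potential term $qu$ as a perturbative source and invokes Theorem~\ref{eq: thm well-posedness q=0} at each step. First I would establish a multiplier estimate: under the restrictions \eqref{eq: restrictions on p}, the map $w\mapsto qw$ is bounded $\widetilde{L}^2(\Omega)\to H^{-s}(\Omega)$. By duality this reduces to boundedness $\widetilde{H}^s(\Omega)\to L^2(\Omega)$, which follows from H\"older's inequality combined with the Sobolev embeddings $\widetilde{H}^s(\Omega)\hookrightarrow L^{2n/(n-2s)}(\Omega)$ when $2s<n$, $\widetilde{H}^s(\Omega)\hookrightarrow L^r(\Omega)$ for every $r<\infty$ when $2s=n$, and $\widetilde{H}^s(\Omega)\hookrightarrow L^{\infty}(\Omega)$ when $2s>n$. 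Setting $X_T\vcentcolon=C([0,T];\widetilde{L}^2(\Omega))\cap C^1([0,T];H^{-s}(\Omega))$, this yields the bound
\[
\|qu\|_{L^2(0,T;H^{-s}(\Omega))}\leq C\|q\|_{L^p(\Omega)}\sqrt{T}\,\|u\|_{X_T}
\]
for every $u\in X_T$.

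Next I would define $\Phi\colon X_T\to X_T$ by letting $\Phi(u)$ be the unique very weak solution of $\partial_t^2 v+(-\Delta)^s v=F-qu$ in $\Omega_T$ with zero exterior value and initial data $u_0,u_1$, as furnished by Theorem~\ref{eq: thm well-posedness q=0}. Linearity combined with the continuity estimate \eqref{eq: continuity estimates} applied to $\Phi(u)-\Phi(\widetilde u)$ gives
\[
\|\Phi(u)-\Phi(\widetilde u)\|_{X_T}\leq C\|q(u-\widetilde u)\|_{L^2(0,T;H^{-s}(\Omega))}\leq C'\|q\|_{L^p(\Omega)}\sqrt{T}\,\|u-\widetilde u\|_{X_T}.
\]
Choosing $T^*=T^*(\|q\|_{L^p(\Omega)})$ small enough that the prefactor is strictly less than one, Banach's fixed-point theorem produces a unique fixed point of $\Phi$ on $[0,T^*]$. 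Since $T^*$ depends only on $\|q\|_{L^p(\Omega)}$, I would iterate the procedure, restarting at time $T^*$ with data $(u(T^*),\partial_t u(T^*))\in\widetilde{L}^2(\Omega)\times H^{-s}(\Omega)$, to cover $[0,T]$ in finitely many steps; alternatively, a weighted-in-time norm $\|u\|_{X_T,\beta}\vcentcolon=\sup_t e^{-\beta t}(\|u(t)\|_{L^2(\Omega)}+\|\partial_t u(t)\|_{H^{-s}(\Omega)})$ with $\beta$ large enough converts $\Phi$ into a contraction on all of $X_T$ in one stroke.

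The remaining step is to verify that this fixed point $u$ is a very weak solution in the sense of Definition~\ref{def: very weak sol}, whose test function $v$ solves the backward problem \emph{with} the potential $q$. Given $G\in L^2(0,T;\widetilde{L}^2(\Omega))$, let $v$ be the weak solution of \eqref{eq: weak sol} (which exists by \cite[Theorem~3.1]{Semilinear-nonlocal-wave-paper}) and set $\widetilde G\vcentcolon=G-qv$. The multiplier estimate applied to $v\in C([0,T];\widetilde{H}^s(\Omega))$ shows $\widetilde G\in L^2(0,T;\widetilde{L}^2(\Omega))$, and $v$ weakly solves $\partial_t^2 v+(-\Delta)^s v=\widetilde G$ backward with zero terminal data. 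By Proposition~\ref{prop: weak sol are very weak sol} together with uniqueness in Theorem~\ref{eq: thm well-posedness q=0}, $v$ coincides with the $q=0$ very weak test function associated to $\widetilde G$. Testing the fixed-point identity with this $\widetilde G$ and rearranging via the symmetry $\langle u,qv\rangle_{L^2(\Omega)}=\langle qu,v\rangle_{L^2(\Omega)}$ then converts it into precisely \eqref{eq: def of very weak sol}. Uniqueness in the $q$-framework follows by subtracting two solutions and applying the same contraction estimate (now with zero source and zero initial data).

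The main obstacle I anticipate is this identification step, rather than the fixed-point construction itself. The fixed-point machinery produces a very weak solution in the $q=0$ sense with the artificial source $F-qu$, whereas Definition~\ref{def: very weak sol} demands the potential-carrying backward equation as its test object. Bridging the two formulations relies crucially on the equivalence of weak and very weak solutions for the backward problem (Proposition~\ref{prop: weak sol are very weak sol}), which permits the uniqueness statement of Theorem~\ref{eq: thm well-posedness q=0} to identify the two kinds of test functions; without this hinge, one could not transfer the $q$ from the backward equation onto the right-hand side while preserving the identity.
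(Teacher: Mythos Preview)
Your approach is essentially the same as the paper's: both establish the multiplier estimate $qu\in H^{-s}(\Omega)$ via Sobolev embeddings and duality, set up the fixed-point map $u\mapsto$ (very weak solution of the $q=0$ problem with source $F-qu$), and contract via a weighted-in-time norm (the paper uses exactly $\|w\|_\theta=\sup_t e^{-\theta t}\|w(t)\|_{L^2(\Omega)}$ on $C([0,T];\widetilde{L}^2(\Omega))$, which is one of your two suggested routes).

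One genuine difference is that you spell out the identification step---checking that the fixed point, which a priori satisfies only the $q=0$ very weak identity with artificial source $F-qu$, in fact satisfies Definition~\ref{def: very weak sol} with the $q$-carrying backward test problem---whereas the paper's proof passes directly from ``$S$ has a unique fixed point'' to ``we have a unique very weak solution'' without writing this out. Your argument for this step is correct: given $G$, the weak backward solution $v$ of \eqref{eq: weak sol} is also the unique weak backward solution of the $q=0$ problem with source $\widetilde G=G-qv$ (here uniqueness of weak solutions from \cite[Theorem~3.1]{Semilinear-nonlocal-wave-paper} already suffices; you do not actually need Proposition~\ref{prop: weak sol are very weak sol}), and the symmetry $\langle u,qv\rangle=\langle qu,v\rangle$ converts one identity into the other. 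The reverse direction, needed for your uniqueness claim, works the same way with $G=\widetilde G+q\tilde v$. So your proposal is correct, and arguably more complete on this point than the paper's own proof.
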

	
	\begin{proof}
		Let us first note that $qu\in H^{-s}(\Omega)$ for any $u\in L^2(\Omega)$ as 
		\begin{equation}
			\label{eq: computation for L2 estimate}
			\begin{split}
				\bigg|\int_{\Omega}quv\, dx \bigg|&\leq \|qv\|_{L^2(\Omega)}\|u\|_{L^2(\Omega)}\\
				&\leq \|q\|_{L^{n/s}(\Omega)}\|v\|_{L^{\frac{2n}{n-2s}}(\Omega)}\|u\|_{L^2(\Omega)}\\
				&\leq C\|q\|_{L^{n/s}(\Omega)}\|v\|_{\widetilde{H}^s(\Omega)}\|u\|_{L^2(\Omega)}\\
				&\leq C\|q\|_{L^{p}(\Omega)}\left\|v \right\|_{\wt{H}^s(\Omega)}\|u\|_{L^2(\Omega)}
			\end{split}
		\end{equation}
		for all $v\in \widetilde{H}^s(\Omega)$.  The case $p=\infty$ is clear. In the case $\frac{n}{s}\leq p<\infty$ with $2s< n$ we used H\"older's inequality with
		\[
		\frac{1}{2}=\frac{n-2s}{2n}+\frac{s}{n},
		\]
		$L^{r_2}(\Omega)\hookrightarrow L^{r_1}(\Omega)$ for $r_1\leq r_2$ as $\Omega\subset\R^n$ is bounded, and Sobolev's inequality. In the case $2s>n$ one can use the embedding $H^s(\R^n)\hookrightarrow L^{\infty}(\R^n)$ and the boundedness of $\Omega$ to see that the estimate \eqref{eq: computation for L2 estimate} holds. In the case $n=2s$ one can use the boundedness of the embedding $\widetilde{H}^s(\Omega)\hookrightarrow L^{\bar{p}}(\Omega)$ for all $2\leq \bar{p}<\infty$, H\"older's inequality and the boundedness of $\Omega$ to get the final estimate \eqref{eq: computation for L2 estimate}. The aforementioned embedding in the critical case follows by \cite{Ozawa} and the Poincar\'e inequality. The above clearly implies that for any $u\in C([0,T];\wt{L}^2(\Omega))$, we have $qu\in L^2(0,T;H^{-s}(\Omega))$ with
		\begin{equation}
			\label{eq: continuity estimate potential}
			\|qu\|_{L^2(0,T;H^{-s}(\Omega))}\leq C\|q\|_{L^p(\Omega)}\|u\|_{L^2(\Omega_T)}.
		\end{equation}
		Now, we wish to use a fixed point argument to construct the solution to \eqref{eq: very weak sol without potential}.

		Via Theorem~\ref{eq: thm well-posedness q=0}, we can define 
		\begin{equation}
			\begin{split}
				S\colon C([0,T];\wt{L}^2(\Omega))&\to C([0,T];\wt{L}^2(\Omega))\cap C^1([0,T];H^{-s}(\Omega)), \quad v\mapsto u,
			\end{split}
		\end{equation} 
		where $u$ is the solution of 
		\begin{equation}
			\begin{cases}
				\partial_t^2u +(-\Delta)^su =F-qv &\text{ in }\Omega_T,\\
				u=0  &\text{ in }(\Omega_e)_T,\\
				u(0)=u_0, \quad \partial_t u(0)=u_1 &\text{ in }\Omega.
			\end{cases}
		\end{equation}
		Assume that $v_1,v_2\in C([0,T];\wt{L}^2(\Omega))$. Since the function $u=S(v^1)-S(v^2)$ solves 
		\[
		\begin{cases}
			\partial_t^2u +(-\Delta)^su =-q(v^1-v^2) &\text{ in }\Omega_T,\\
			u=0  &\text{ in }(\Omega_e)_T,\\
			u(0)= \partial_t u(0)=0 &\text{ in }\Omega,
		\end{cases}
		\]
		the energy estimate \eqref{eq: continuity estimates} (applied for the case $T=t$) yields
		\[
		\begin{split}
			\|u(t)\|_{L^2(\Omega)}+\left\|\partial_t u(t)\right\|_{H^{-s}(\Omega)}&\leq C\left\|q(v^1-v^2)\right\|_{L^2(0,t;H^{-s}(\Omega))},
		\end{split}
		\]
		for a.e. $t\in[0,T]$. Hence, by \eqref{eq: continuity estimate potential} we obtain
		\begin{equation}
			\label{eq: very weak sol with q est}
			\|u(t)\|_{L^2(\Omega)}\leq C\|q\|_{L^p(\Omega)}\left\|v^1-v^2 \right\|_{L^2(\Omega_t)},
		\end{equation}
		for $t\in [0,T]$. Next, introduce for $\theta>0$ the equivalent norm
		\[
		\|w\|_{\theta}\vcentcolon = \sup_{0\leq t\leq T}e^{-\theta t}\|w(t)\|_{L^2(\Omega)}
		\]
		on $C([0,T];\wt{L}^2(\Omega))$. Then from equation \eqref{eq: very weak sol with q est} we deduce that
		\[
		\begin{split}
			\|u(t)\|_{L^2(\Omega)}&\leq C\|q\|_{L^p(\Omega)}\left\|v^1-v^2\right\|_{\theta}\bigg(\int_0^te^{2\theta \tau}\,d\tau\bigg)^{1/2}\\
			&= \frac{C}{(2\theta)^{1/2}}(e^{2\theta t}-1)^{1/2}\|q\|_{L^p(\Omega)}\left\|v^1-v^2\right\|_{\theta}\\
			&\leq \frac{C}{(2\theta)^{1/2}}e^{\theta t}\|q\|_{L^p(\Omega)}\left\|v^1-v^2 \right\|_{\theta}
		\end{split}
		\]
		for $t\in [0,T]$. Dividing by $e^{\theta t}$ and taking the supremum over $[0,T]$, this implies
		\[
		\begin{split}
			\|u\|_{\theta}&\leq \frac{C}{(2\theta)^{1/2}}\|q\|_{L^p(\Omega)}\left\|v^1-v^2\right\|_{\theta}.
		\end{split}
		\]
		Remembering that $u=S(v^1)-S(v^2)$ and choosing $\theta>0$ such that
		\[
		C_0\vcentcolon = \frac{C}{(2\theta)^{1/2}}\|q\|_{L^p(\Omega)}<1,
		\]
		we get
		\[
		\|S(v^1)-S(v^2)\|_{\theta}\leq C_0\|v^1-v^2\|_{\theta}
		\]
		and thus $S$ is a contraction on the complete metric space $(C([0,T];\wt{L}^2(\Omega)),\|\cdot\|_{\theta})$. Therefore, we can apply the Banach fixed point theorem to deduce that $S$ has a unique fixed point $u\in C([0,T];\wt{L}^2(\Omega))\cap C^1([0,T];H^{-s}(\Omega))$. Hence, we have shown the existence of a unique very weak solution, and we can conclude the proof. 
	\end{proof}
	
	\subsection{Properties of very weak solutions}
	\label{subsec: properties of very weak sol}
	
	In this section, we establish some relations between various definitions of solutions to linear NWEQs, which can be seen as a consistency test of the introduced notions. In Proposition~\ref{prop: very weak sol are distribution} and \ref{prop: weak sol are very weak sol}, we show that
	\[
	 \text{ weak solution }\Rightarrow\text{ very weak solution }\Rightarrow\text{ distributional solution}
	\]
	and in Corollary~\ref{cor: regular very weak sol are weak sol} that 
	\[
	\text{ regular very weak solution = weak solution}.
	\]
	
	\begin{proposition}[Distributional solutions]\label{prop: very weak sol are distribution}
		Let $F\in L^2(0,T;H^{-s}(\Omega))$, $u_0\in \wt{L}^2(\Omega)$, $u_1\in H^{-s}(\Omega)$ and $q\in L^p(\Omega)$ with $p$ satisfying the restrictions \eqref{eq: restrictions on p}. The unique very weak solution of \eqref{eq: very weak sol with potential} is a distributional solution, that is, there holds
		\begin{equation}
			\label{eq: distributional solution}
			\int_{\Omega_T}u\LC \partial_t^2\varphi+(-\Delta)^s\varphi+q\varphi\RC dt=\int_0^T\langle F,\varphi\rangle\,dt+\langle u_0,\partial_t\varphi(0)\rangle_{L^2(\Omega)}-\langle u_1,\varphi(0)\rangle,
		\end{equation}
		for all $\varphi\in C_c^{\infty}([0,T)\times \Omega)$.
	\end{proposition}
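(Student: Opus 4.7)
The plan is to reduce the distributional identity to the defining identity of very weak solutions by engineering the backward-in-time test function $v$ in \eqref{eq: def of very weak sol} to coincide with the given $\varphi$. Given $\varphi \in C_c^\infty([0,T)\times\Omega)$, I would set
\[
G(t,\cdot) := \partial_t^2\varphi(t,\cdot) + \chi_\Omega\cdot (-\Delta)^s\varphi(t,\cdot) + q\,\varphi(t,\cdot),
\]
understood as a function on $\R^n$ by zero extension outside $\overline\Omega$. Each summand lies in $L^2(0,T;\widetilde{L}^2(\Omega))$: the first is smooth with compact support in $\Omega_T$; the second is the $\Omega$-restriction of $(-\Delta)^s\varphi$, which is smooth on $\R^n$ with $L^2(\R^n)$-norm depending continuously on $t\in[0,T]$; and the third is controlled by $\|q\|_{L^p(\Omega)}\|\varphi\|_{L^\infty(\Omega_T)}$ via H\"older and the Sobolev-type estimate \eqref{eq: computation for L2 estimate}.

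Next I would verify that $\varphi$ is itself the unique weak solution of the backward problem \eqref{eq: weak sol} with source $G$. Compact support of $\varphi$ in $[0,T)\times\Omega$ supplies the terminal conditions $\varphi(T) = \partial_t\varphi(T) = 0$, the exterior vanishing $\varphi = 0$ on $(\Omega_e)_T$, and the regularity $\varphi \in C([0,T];\widetilde{H}^s(\Omega))\cap C^1([0,T];\widetilde{L}^2(\Omega))$. For the weak form of the PDE, the key computation is that any test function $w \in \widetilde{H}^s(\Omega)$ is supported in $\overline\Omega$, so
\[
\big\langle (-\Delta)^{s/2}\varphi, (-\Delta)^{s/2}w \big\rangle_{L^2(\R^n)} = \big\langle (-\Delta)^s\varphi, w \big\rangle_{L^2(\R^n)} = \big\langle \chi_\Omega (-\Delta)^s\varphi, w \big\rangle_{L^2(\Omega)},
\]
and the $\partial_t^2\varphi$ and $q\varphi$ contributions are immediate. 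By uniqueness of weak solutions to the backward problem, $v = \varphi$. Inserting $(G,v) = (G,\varphi)$ into \eqref{eq: def of very weak sol} and using that $u(t)\in \widetilde{L}^2(\Omega)$ is supported in $\overline\Omega$ to rewrite
\[
\int_0^T\langle u(t),G(t)\rangle_{L^2(\Omega)}\,dt = \int_{\Omega_T} u\LC\partial_t^2\varphi + (-\Delta)^s\varphi + q\varphi\RC dx\,dt,
\]
yields the claimed identity \eqref{eq: distributional solution} (up to comparison of signs with the definition).

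The one genuine subtlety is that $(-\Delta)^s\varphi$ need not be supported in $\overline\Omega$ even when $\varphi$ is, since the fractional Laplacian is nonlocal; this is precisely why $G$ has to be defined with the cutoff $\chi_\Omega$. The construction is nevertheless consistent because in the very weak formulation the source is paired only against functions in $\widetilde{L}^2(\Omega)$, which are supported in $\overline\Omega$, so the portion of $(-\Delta)^s\varphi$ living in $\Omega_e$ never enters the pairing; equivalently, $\varphi$ does satisfy the backward PDE in the sense required by the weak formulation of \eqref{eq: weak sol}, which is all that is needed to invoke uniqueness and close the argument.
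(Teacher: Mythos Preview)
Your approach is correct and takes a genuinely different, more direct route than the paper's. The paper argues via spectral expansion: it reduces to $q=0$, expands both $u$ and $\varphi$ in the Dirichlet eigenbasis $(\phi_k)_{k\in\N}$ of $(-\Delta)^s$, invokes the ODE for the Fourier coefficients $c_k$ together with the integration-by-parts relation \eqref{eq: eq for coefficients well-posedness}, and then passes to the limit in the partial sums. You bypass all of this machinery by recognising that the smooth compactly supported $\varphi$ is itself the unique weak solution of the backward problem \eqref{eq: weak sol} with the explicit source $G=\partial_t^2\varphi+\chi_\Omega(-\Delta)^s\varphi+q\varphi$, so that the very-weak identity \eqref{eq: def of very weak sol} applied with $(G,v)=(G,\varphi)$ \emph{is} already the distributional formula. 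This is more elementary and does not require reopening the Fourier construction of Theorem~\ref{eq: thm well-posedness q=0}; the paper's route buys essentially nothing extra beyond reuse of machinery already in place. Your handling of the cutoff $\chi_\Omega$ is exactly the right point: the nonlocal tail of $(-\Delta)^s\varphi$ on $\Omega_e$ is invisible both in the weak formulation of \eqref{eq: weak sol} (test functions $w\in\widetilde H^s(\Omega)$) and in the pairing with $u\in\widetilde L^2(\Omega)$. One small check worth making explicit is that $q\varphi\in L^2(0,T;\widetilde L^2(\Omega))$: the restrictions \eqref{eq: restrictions on p} force $p\ge 2$, and since $\Omega$ is bounded and $\varphi$ is bounded with compact support in $\Omega$, this is immediate. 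Your parenthetical about signs is also apt: plugging $v=\varphi$ into \eqref{eq: def of very weak sol} yields the initial-data terms with the signs $+\langle u_1,\varphi(0)\rangle-\langle u_0,\partial_t\varphi(0)\rangle$, which is the formula consistent with the formal derivation after Definition~\ref{def: very weak sol}.
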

	\begin{proof}
		Let us note that it is enough to prove the result for $q=0$ as the general case follows by replacing $F$ with $F-qu$.
		
		We use the same notation as in the proof of Theorem~\ref{eq: thm well-posedness q=0}, but this time the $\alpha_k$ are the coefficients in the expansion of $\varphi$ in the orthonormal basis $(\phi_k)_{k\in\N}$, that is $\alpha_k=\langle \varphi,\phi_k\rangle_{L^2(\Omega)}$. We start with the identity 
		\[
		\begin{split}
			\int_0^T c''_k \alpha_k\,dt=\int_0^T c_k\alpha''_k\,dt+u_0^k\alpha'_k(0)-u_1^k\alpha_k(0)
		\end{split}
		\]
		(see~\eqref{eq: eq for coefficients well-posedness}). Now, using the relation \eqref{eq: initial value problem for coeff}, we deduce the equality
		\[
		\int_0^T \LC-\lambda_k c_k+F_k\RC \alpha_k\,dt=\int_0^T c_k\alpha''_k\,dt+u_0^k\alpha'_k(0)-u_1^k\alpha_k(0).
		\]
		This is equivalent to
		\begin{equation}
			\label{eq: fourier coeff integral}
			\int_0^T c_k \LC\alpha''_k+\lambda_k \alpha_k\RC dt=\int_0^T F_k\alpha_k\,dt+u_1^k\alpha_k(0)-u_0^k\alpha'_k(0).
		\end{equation}
		Next note that
		\[
		\begin{split}
			\langle (-\Delta)^s\varphi(t),\phi_k\rangle_{L^2(\Omega)}&=\langle (-\Delta)^s\varphi(t),\phi_k\rangle_{L^2(\R^n)}\\
			&=\langle (-\Delta)^{s/2} \varphi(t),(-\Delta)^{s/2}\phi_k\rangle_{L^2(\R^n)}\\
			&=\lambda_k\langle \varphi(t),\phi_k\rangle_{L^2(\Omega)}\\
			&=\lambda_k\alpha_k(t)
		\end{split}
		\]
		for all $0\leq t\leq T$. Using $\langle \phi_k,\phi_\ell \rangle_{L^2(\Omega)}=\delta_{k\ell}$, we may write
		\[
		\begin{split}
			\bigg\langle\sum_{k=1}^m c_k\phi_k,\sum_{\ell=1}^m\langle (-\Delta)^s \varphi,\phi_{\ell}\rangle_{L^2(\Omega)}\phi_{\ell}\bigg\rangle_{L^2(\Omega)}=\sum_{k=1}^m\lambda_kc_k\alpha_k.
		\end{split}
		\]
		Since $u,\chi_{\Omega}(-\Delta)^s\varphi\in L^2(0,T;\widetilde{L}^2(\Omega))$, where $\chi_\Omega$ is the characteristic function of $\Omega$, the time integral of the left hand side converges to $\langle u,(-\Delta)^s\varphi\rangle_{L^2(\Omega_T)}$. We also have
		\[
		\int_0^T\bigg\langle \sum_{k=1}^m c_k\phi_k,\sum_{\ell=1}^m\langle \partial_t^2 \varphi,\phi_{\ell}\rangle_{L^2(\Omega)}\phi_{\ell}\bigg\rangle_{L^2(\Omega)}\,dt=\int_0^T \sum_{k=1}^m c_k\alpha''_k\,dt\to \int_{\Omega_T}u\partial_t^2\varphi\,dxdt
		\]
		as $m\to\infty$. Thus, summing the identity \eqref{eq: fourier coeff integral} from $k=1$ to $m$ and passing to the limit $m\to\infty$ yields
		\[
		\int_{\Omega_T}u(\partial_t^2\varphi+(-\Delta)^s\varphi)\,dxdt=\int_0^T\langle F,\varphi\rangle\,dt+\langle u_0,\partial_t\varphi(0)\rangle_{L^2(\Omega)}-\langle u_1,\varphi(0)\rangle.
		\]
		For the convergence of the term involving $F$ we refer to Step 4 in the proof of Theorem~\ref{eq: thm well-posedness q=0}. Hence, we can conclude the proof.
	\end{proof}

	\begin{proposition}[Weak solutions are very weak solutions]
		\label{prop: weak sol are very weak sol}
		Suppose that $F\in L^2(0,T;\widetilde{L}^2(\Omega))$, $u_0\in \widetilde{H}^s(\Omega)$, $u_1\in \wt{L}^2(\Omega)$,  $q\in L^p(\Omega)$ with $p$ satisfying the restrictions \eqref{eq: restrictions on p} and $u\in C([0,T];\widetilde{H}^s(\Omega))\cap C^1([0,T];\wt{L}^2(\Omega))$ is a weak solution of \eqref{eq: very weak sol with potential}. Then $u$ is a very weak solution of \eqref{eq: very weak sol with potential}.
	\end{proposition}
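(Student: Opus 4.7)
The plan is to exploit the symmetry of the spatial bilinear form by testing the weak equation for $u$ against the backward weak solution $v$ for $G$, testing the backward weak equation for $v$ against $u$, and subtracting the two identities. The quadratic-in-space terms will cancel, leaving precisely the very weak identity of Definition~\ref{def: very weak sol}.

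First, I would upgrade the weak formulation of $u$ to allow time-dependent test functions. For each $w\in\widetilde{H}^s(\Omega)$, the defining distributional identity says that $t\mapsto\langle\partial_t u(t),w\rangle_{L^2(\Omega)}$ has distributional derivative $\langle F(t),w\rangle_{L^2(\Omega)} - \langle (-\Delta)^{s/2}u(t),(-\Delta)^{s/2}w\rangle_{L^2(\R^n)} - \langle qu(t),w\rangle_{L^2(\Omega)}$, which lies in $L^2(0,T)$. Since $\partial_t u\in C([0,T];\widetilde{L}^2(\Omega))$, pairing against $\eta(t)w$ for $\eta\in C^\infty([0,T])$ and integrating by parts in time yields boundary contributions $\langle\partial_t u(T),w\rangle\eta(T) - \langle u_1,w\rangle\eta(0)$. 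By linearity and density of $\mathrm{span}\{\eta(t)w : \eta\in C^\infty([0,T]),\, w\in\widetilde{H}^s(\Omega)\}$ in the Bochner space $\mathcal{W}:=\{\psi\in L^2(0,T;\widetilde{H}^s(\Omega)) : \partial_t\psi\in L^2(0,T;\widetilde{L}^2(\Omega))\}$, the identity extends to all $\psi\in\mathcal{W}$:
\begin{equation*}
\int_0^T\!\!\big[-\langle\partial_t u,\partial_t\psi\rangle_{L^2} + \langle(-\Delta)^{s/2}u,(-\Delta)^{s/2}\psi\rangle_{L^2(\R^n)} + \langle qu,\psi\rangle_{L^2}\big]dt = \int_0^T\!\!\langle F,\psi\rangle_{L^2}dt + \langle\partial_t u(T),\psi(T)\rangle_{L^2} - \langle u_1,\psi(0)\rangle_{L^2}.
\end{equation*}
The same reasoning applied to the backward weak equation \eqref{eq: weak sol} for $v$ (which admits the analogous extension, now with boundary data $v(T)=\partial_t v(T)=0$) gives, for any $\psi\in\mathcal{W}$,
\begin{equation*}
\int_0^T\!\!\big[-\langle\partial_t v,\partial_t\psi\rangle_{L^2} + \langle(-\Delta)^{s/2}v,(-\Delta)^{s/2}\psi\rangle_{L^2(\R^n)} + \langle qv,\psi\rangle_{L^2}\big]dt = \int_0^T\!\!\langle G,\psi\rangle_{L^2}dt - \langle\partial_t v(0),\psi(0)\rangle_{L^2}.
\end{equation*}

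Then I would plug $\psi=v$ into the first identity (using $v(T)=0$) and $\psi=u$ into the second (using $u(0)=u_0$). The three bilinear expressions $\langle\partial_t u,\partial_t v\rangle_{L^2}$, $\langle(-\Delta)^{s/2}u,(-\Delta)^{s/2}v\rangle_{L^2(\R^n)}$ and $\langle qu,v\rangle_{L^2}$ are symmetric, so subtracting the two identities eliminates every spatial term and leaves
\begin{equation*}
\int_0^T\langle F,v\rangle_{L^2}\,dt - \int_0^T\langle u,G\rangle_{L^2}\,dt = -\langle u_1,v(0)\rangle_{L^2} + \langle u_0,\partial_t v(0)\rangle_{L^2},
\end{equation*}
which is exactly \eqref{eq: def of very weak sol}. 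Finally, I would verify $u\in C([0,T];\widetilde{L}^2(\Omega))\cap C^1([0,T];H^{-s}(\Omega))$, which is immediate from the stronger weak-solution regularity $u\in C([0,T];\widetilde{H}^s(\Omega))\cap C^1([0,T];\widetilde{L}^2(\Omega))$ together with the embeddings $\widetilde{H}^s(\Omega)\hookrightarrow\widetilde{L}^2(\Omega)\hookrightarrow H^{-s}(\Omega)$.

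The main obstacle is the density/extension argument allowing time-dependent test functions together with the boundary terms at $t=0,T$. A clean way to handle this is to mollify $\psi$ in time to get smooth approximants $\psi_\varepsilon$, test the distributional identity against $\psi_\varepsilon$, and pass to the limit using the continuity of $\partial_t u$ into $\widetilde{L}^2(\Omega)$ to make sense of the endpoint evaluations; alternatively, one can perform the Galerkin reduction using the eigenbasis $\{\phi_k\}$ of Lemma~\ref{spectral lemma} as in Step~4 of the proof of Theorem~\ref{eq: thm well-posedness q=0}, doing the integration by parts for each Fourier coefficient and then passing to the limit. Either route only uses standard Bochner-calculus tools, so no new ideas are needed beyond carefully tracking the boundary contributions.
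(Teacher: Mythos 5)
Your approach is genuinely different from the paper's and is essentially correct. The paper proves the result by parabolic regularization: it perturbs the forward and backward equations by $\pm\eps(-\Delta)^s\partial_t$, obtains regularized solutions with $\partial_t u_\eps\in L^2(0,T;\widetilde{H}^s(\Omega))$ and $\partial_t^2 u_\eps\in L^2(0,T;H^{-s}(\Omega))$, performs two integrations by parts in time to transfer $\partial_t^2$ from $u_\eps$ to $w_\eps$, and then passes to the limit $\eps\to 0$. You instead upgrade the weak formulation once in time (so that time-dependent test functions with $\partial_t\psi\in L^2(0,T;\widetilde{L}^2(\Omega))$ are admissible), test each PDE against the other solution, and cancel the symmetric spatial bilinear form by antisymmetry of the subtraction. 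This avoids the regularization machinery altogether, and in fact handles general $q$ directly (the paper reduces to $q=0$ and cites elsewhere for the general case), at the cost of needing the density/mollification argument to make the extended formulation rigorous. Both strategies are legitimate; yours is arguably lighter.

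One caution: your two displayed ``extended'' weak formulations carry the wrong sign on the endpoint terms. Integrating $\int_0^T \frac{d}{dt}\langle\partial_t u,w\rangle\,\eta\,dt$ by parts produces $\langle\partial_t u(T),w\rangle\eta(T)-\langle u_1,w\rangle\eta(0)$ on the \emph{left}; moving these to the right-hand side flips the signs, so the correct boundary contribution is $-\langle\partial_t u(T),\psi(T)\rangle+\langle u_1,\psi(0)\rangle$ for the forward equation, and $+\langle\partial_t v(0),\psi(0)\rangle$ for the backward one. Your stated intermediate identities have these flipped, and you then flip the signs again in passing to the final display, so the two errors cancel and you land on the correct identity \eqref{eq: def of very weak sol}. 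The logic is sound, but as written the derivation is inconsistent with itself; you should redo the bookkeeping so that each line follows from the previous one. Also worth stating explicitly: when plugging $\psi=u$ into the backward identity you use \emph{both} $v(T)=0$ and $\partial_t v(T)=0$, not just the one you mentioned.
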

	
	\begin{proof}
		Let us prove the result only in the case $q=0$, as the same proof applies in the general case $q\neq 0$. For the necessary modifications, we refer the reader to \cite[Proof of Proposition~4.1]{Semilinear-nonlocal-wave-paper}.

		Let $G\in L^2(0,T;\widetilde{L}^2(\Omega))$ and suppose $w\in C([0,T];\widetilde{H}^s(\Omega))\cap C^1([0,T];\wt{L}^2(\Omega))$ is the unique solution to 
		\begin{equation}
			\label{eq: PDE for w}
			\begin{cases}
				\partial_t^2w +(-\Delta)^sw =G &\text{ in }\Omega_T,\\
				w=0  &\text{ in }(\Omega_e)_T,\\
				w(T)=0, \quad \partial_t w(T)=0 &\text{ in }\Omega.
			\end{cases}
		\end{equation}
		We follow now the proof of \cite[Claim~4.2]{Semilinear-nonlocal-wave-paper}, that is we consider the parabolic regularized problems
		\begin{equation}
			\label{eq: regularization for equation of v}
			\begin{cases}
				\partial_t^2u +\eps(-\Delta)^s\partial_t u+(-\Delta)^su  = F &\text{ in }\Omega_T,\\
				u=0  &\text{ in }(\Omega_e)_T,\\
				u(0)=u_0, \quad \partial_t u(0)=u_1 &\text{ in }\Omega
			\end{cases}
		\end{equation}
		and
		\begin{equation}
			\label{eq: regularization for equation of w}
			\begin{cases}
				\partial_t^2w -\eps (-\Delta)^s \partial_t w+(-\Delta)^sw = G&\text{ in }\Omega_T,\\
				w=0  &\text{ in }(\Omega_e)_T,\\
				w(T)= \partial_t w(T)=0 &\text{ in }\Omega
			\end{cases}
		\end{equation}
		for $\eps>0$. By \cite[Theorem~3.1]{zimmermann2024calderon} or \cite[Chapter 3, Theorem~8.3]{LionsMagenesVol1}, these regularized problems have a unique (weak) solution 
		\begin{equation}
			\begin{split}
				u_{\eps}&\in C([0,T];\widetilde{H}^s(\Omega))\text{ with } \begin{cases}
					&\hspace{-0.3cm}\partial_t u_{\eps}\in L^2(0,T;\widetilde{H}^s(\Omega))\cap C([0,T];\wt{L}^2(\Omega))\\
					&\hspace{-0.3cm}\partial_t^2 u_{\eps}\in L^2(0,T;H^{-s}(\Omega))
				\end{cases} \\
				w_{\eps}&\in C([0,T];\widetilde{H}^s(\Omega))\text{ with } \begin{cases}
					&\hspace{-0.3cm}\partial_t w_{\eps}\in L^2(0,T;\widetilde{H}^s(\Omega))\cap C([0,T];\wt{L}^2(\Omega))\\
					&\hspace{-0.3cm}\partial_t^2 w_{\eps}\in L^2(0,T;H^{-s}(\Omega))
				\end{cases}
			\end{split}
		\end{equation}
		and as $\eps\to 0$, one has
		\begin{equation}
			\label{eq: convergence}
			\begin{split}
				u_{\eps}\to u &\text{ in }C([0,T];\widetilde{H}^s(\Omega)),\\
				\partial_t u_{\eps}\to \partial_t u &\text{ in }C([0,T];\wt{L}^2(\Omega)),\\
				\partial_t^2u_{\eps}\weak \partial_t^2 u&\text{ in } L^2(0,T;H^{-s}(\Omega))
			\end{split}
		\end{equation}
		(cf.~\cite[Chapter 3, eq.~(8.74)]{LionsMagenesVol1}). This ensures that
		\begin{equation}
			\label{eq: convergence second time derivative}\partial_t^2u_{\eps}\weakstar \partial_t^2 u \text{ in } L^2(0,T;H^{-s}(\Omega)) \text{ as }\eps \to 0.
		\end{equation}
		The convergence results in \eqref{eq: convergence} hold for the functions $w_\eps$ and $w$ as well. Now, using twice integration by parts, which is allowed by the regularity of the first time derivative of $u_\eps$ and $w_\eps$, we obtain
		\[
		\int_0^T \left\langle \partial_t^2 u_{\eps},w_\eps \right\rangle \,dt=\int_0^T \left\langle \partial_t^2w_\eps,u_\eps \right\rangle \,dt-\langle u_1,w_{\eps}(0)\rangle_{L^2(\Omega)}+\langle u_0,\partial_t w_{\eps}(0)\rangle_{\widetilde{H}^s(\Omega)}
		\]
		for any $\eps>0$ (cf.~\cite[eq. (4.1)]{zimmermann2024calderon}). In this computation, we used the final and initial time conditions for $w_{\eps}$ and $u_{\eps}$, respectively. Now, passing to the limit $\eps\to 0$ gives
		\[
		\int_0^T \left\langle \partial_t^2 u,w \right\rangle dt=\int_0^T \left\langle \partial_t^2w,u\right\rangle dt-\langle u_1,w(0)\rangle_{L^2(\Omega)}+\langle u_0,\partial_t w(0)\rangle_{L^2(\Omega)}.
		\]
		By \eqref{eq: very weak sol with potential} and  \eqref{eq: PDE for w} this is equivalent to 
		\[
		\int_0^T \langle F,w \rangle \,dt=\int_0^T \left\langle G,u\right\rangle dt-\langle u_1,w(0)\rangle_{L^2(\Omega)}+\langle u_0,\partial_t w(0)\rangle_{L^2(\Omega)}.
		\]
		Hence, we can conclude the proof.
	\end{proof}
	
	\begin{corollary}[Regular very weak solutions = weak solutions]
		\label{cor: regular very weak sol are weak sol}
		Suppose that $F\in L^2(0,T;\wt{L}^2(\Omega))$, $u_0\in \widetilde{H}^s(\Omega)$, $u_1\in \wt{L}^2(\Omega)$, $q\in L^p(\Omega)$ with $p$ satisfying the restrictions \eqref{eq: restrictions on p} and $u$ is a very weak solution to \eqref{eq: very weak sol with potential} such that $u\in C([0,T];\widetilde{H}^s(\Omega))\cap C^1([0,T];\wt{L}^2(\Omega))$. Then $u$ is a weak solution of \eqref{eq: very weak sol with potential}.
	\end{corollary}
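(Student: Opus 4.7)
The plan is to deduce the corollary from the well-posedness already established for weak and very weak solutions, using a uniqueness/consistency argument rather than manipulating the identity \eqref{eq: def of very weak sol} directly.

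First I would invoke the existence of weak solutions: because the data $(F,u_0,u_1)$ belongs to $L^2(0,T;\wt{L}^2(\Omega))\times \widetilde{H}^s(\Omega)\times \wt{L}^2(\Omega)$, the standard weak well-posedness result for \eqref{eq: very weak sol with potential} (cited as \cite[Theorem~3.1]{Semilinear-nonlocal-wave-paper} in Definition~\ref{def: very weak sol}) produces a weak solution $\widetilde{u}\in C([0,T];\widetilde{H}^s(\Omega))\cap C^1([0,T];\wt{L}^2(\Omega))$.

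Next I would apply Proposition~\ref{prop: weak sol are very weak sol} to $\widetilde{u}$, which guarantees that $\widetilde{u}$ is also a very weak solution of \eqref{eq: very weak sol with potential}. At this point both $u$ (by hypothesis) and $\widetilde{u}$ (by the previous step) are very weak solutions with the same data $(F,u_0,u_1)$ and potential $q$, and both lie in the space $C([0,T];\wt{L}^2(\Omega))\cap C^1([0,T];H^{-s}(\Omega))$ since $\widetilde{H}^s(\Omega)\hookrightarrow \wt{L}^2(\Omega)$ and $\wt{L}^2(\Omega)\hookrightarrow H^{-s}(\Omega)$. The uniqueness statement of Theorem~\ref{eq: thm well-posedness with q nonzero} then forces $u=\widetilde{u}$, and since $\widetilde{u}$ is a weak solution, so is $u$.

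The main (and only nontrivial) point is ensuring that the uniqueness part of Theorem~\ref{eq: thm well-posedness with q nonzero} applies in the ambient space $C([0,T];\wt{L}^2(\Omega))\cap C^1([0,T];H^{-s}(\Omega))$ to which both candidates belong; this is guaranteed by the embedding $\widetilde{H}^s(\Omega)\hookrightarrow \wt{L}^2(\Omega)\hookrightarrow H^{-s}(\Omega)$ (valid for $s>0$, $\Omega$ bounded, via Poincar\'e) so that the regular candidate $u$ lies in the class for which Theorem~\ref{eq: thm well-posedness with q nonzero} guarantees uniqueness. No direct manipulation of the test-function identity is required, and no additional energy estimates beyond those already established in Section~\ref{subsec: construction very weak without potential} and Section~\ref{subsec: Very weak solutions with potential} need to be proved.
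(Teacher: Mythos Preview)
Your proposal is correct and follows essentially the same argument as the paper: construct the unique weak solution, invoke Proposition~\ref{prop: weak sol are very weak sol} to see it is also a very weak solution, and conclude by uniqueness of very weak solutions (Theorem~\ref{eq: thm well-posedness with q nonzero}). The extra remarks about the embeddings $\widetilde{H}^s(\Omega)\hookrightarrow \wt{L}^2(\Omega)\hookrightarrow H^{-s}(\Omega)$ are harmless but unnecessary, since uniqueness in Theorem~\ref{eq: thm well-posedness with q nonzero} is stated for the very weak solution class itself.
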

	
	\begin{proof}
		By the usual well-posedness result, the problem \eqref{eq: very weak sol with potential} has a unique weak solution $v$. Using Proposition~\ref{prop: weak sol are very weak sol}, one sees that $v$ is a very weak solution to the same problem. By the uniqueness of very weak solutions, it follows that $u=v$, which in turn implies the assertion.
	\end{proof}

	\section{Runge approximation and inverse problem for linear NWEQs}\label{sec: runge+inverse}
	
	As we mentioned in Section~\ref{sec: introduction}, a key ingredient to studying nonlocal inverse problems is based on the Runge approximation. In this section, we establish the proof of Theorem~\ref{thm: runge}.
	
	\subsection{Runge approximation}
	
	\begin{proof}[Proof of Theorem~\ref{thm: runge}]
		As usual, we show the Runge approximation property by a Hahn-Banach argument. Hence, 
        we need to show that given $F\in L^2(0,T;H^{-s}(\Omega))$ vanishing on $\mathscr{R}_W$, it follows that $F=0$.
        First observe that if $u$ solves \eqref{eq:runge with zero initial}, then $v=u-\varphi$ is the unique solution to 
		\begin{equation}
			\begin{cases}
				\partial_t^2v +(-\Delta)^sv+qv = -(-\Delta)^s\varphi &\text{ in }\Omega_T,\\
				v=0  &\text{ in }(\Omega_e)_T,\\
				v(0)=\partial_t v(0)=0 &\text{ in }\Omega.
			\end{cases}
		\end{equation}
		Now, by Theorem~\ref{eq: thm well-posedness with q nonzero} there is a unique solution $w$ of
		\begin{equation}
			\label{eq: solution to weak RHS}
			\begin{cases}
				\partial_t^2w +(-\Delta)^sw +qw= F &\text{ in }\Omega_T,\\
				w=0  &\text{ in }(\Omega_e)_T,\\
				w(T)= \partial_t w(T)=0 &\text{ in }\Omega.
			\end{cases}
		\end{equation}
		As $\chi_{\Omega}(-\Delta)^s\varphi\in L^2(0,T;\wt{L}^2(\Omega))$, we can use $v$ as a test function for the equation of $w$ to obtain
		\[
		-\int_0^T \left\langle w(t),(-\Delta)^s\varphi (t)\right\rangle_{L^2(\Omega)}\,dt=\int_0^T\langle F(t),v(t)\rangle\,dt
		\]
		(see~\eqref{eq: def of very weak sol}). By assumption, the right-hand side vanishes, and hence
		\[
		\int_0^T\left\langle w(t),(-\Delta)^s\varphi (t)\right\rangle_{L^2(\Omega)}\,dt=0.
		\]
		By taking $\varphi(x,t)=\eta(t)\psi(x)$ with $\eta\in C_c^{\infty}((0,T))$ and $\psi\in C_c^{\infty}(W)$, this implies
		\[
		(-\Delta)^s w(t)=0\quad \text{ for }x\in W\text{ and a.e. }t\in [0,T].
		\]
		As $w\in L^2(0,T;\widetilde{L}^2(\Omega))$ we know that $w=0$ in $\Omega_e$ and hence the unique continuation principle ensures $w=0$ in $\R^n$. Now, according to Proposition~\ref{prop: very weak sol are distribution}, very weak solutions are distributional solutions, and thus we deduce that 
		\[
		\int_0^T\langle F,\Phi\rangle\,dt=0
		\]
		for all $\Phi\in C_c^{\infty}([0,T)\times \Omega)$ (see~\eqref{eq: distributional solution}). This in particular shows that $F=0$ as $C_c^{\infty}(\Omega_T)$ is dense in $L^2(0,T;\widetilde{H}^s(\Omega))$ and $(L^2(0,T;\widetilde{H}^s(\Omega)))^*=L^2(0,T;H^{-s}(\Omega))$.
		This proves the assertion.
	\end{proof}
	
	\subsection{DN maps for NWEQs}
	
	Let us next recall the rigorous definition of the DN map related to NWEQs.

	\begin{definition}[DN map]
		Let $\Omega\subset \R^n$ be a bounded Lipschitz domain, $s>0$ a non-integer, $T>0$ and $q\in L^p(\Omega)$ with $p$ satisfying the restrictions \eqref{eq: restrictions on p}. Then we define the DN map $\Lambda_{q}$ related to
		\begin{equation}
			\label{eq: well-posedness linear case DN}
			\begin{cases}
				\partial_t^2u +(-\Delta)^su+qu=0 &\text{ in }\Omega_T,\\
				u=\varphi&\text{ in }(\Omega_e)_T,\\
				u(0)= \partial_t u(0)=0  &\text{ in }\Omega
			\end{cases}
		\end{equation}
		by
		\begin{equation}
			\label{eq: modified linear DN map}
			\begin{split}
				\left\langle \Lambda_{q}\varphi,\psi \right\rangle \vcentcolon =	\int_{\R^n_T}(-\Delta)^{s/2}u\, (-\Delta)^{s/2}\psi\,dxdt,
			\end{split}
		\end{equation}
		for all $\varphi,\psi\in C_c^{\infty}((\Omega_e)_T)$, where $u\in C([0,T];H^s(\R^n))\cap C^1([0,T];L^2(\R^n))$ is the unique solution of \eqref{eq: well-posedness linear case DN}
		(see~\cite[Theorem~3.1~\&~Remark~3.2]{Semilinear-nonlocal-wave-paper}).
	\end{definition}
	
	\begin{remark}
		\label{remark on DN map}
		Let us mention for our later study of nonlinear NWEQs that if $qu$ is replaced by a nonlinear function $f(x,u)$ such that $f\colon \Omega\times\R\to\R$ satisfies Assumption~\ref{main assumptions on nonlinearities}, then we can still define the DN map $\Lambda_f$ by \eqref{eq: modified linear DN map}
		for all $\varphi,\psi\in C_c^{\infty}((\Omega_e)_T)$, where this time $u\in C([0,T];H^s(\R^n))\cap C^1([0,T];L^2(\R^n))$ is the unique solution of
		DN map $\Lambda_{f}$ related to \eqref{eq: main} (see~\cite[Proposition~3.7]{Semilinear-nonlocal-wave-paper}).
	\end{remark}
	
	\subsection{Proof of Theorem~\ref{thm: uniqueness potential}}
	
	Before giving the proof of Theorem~\ref{thm: uniqueness potential}, let us introduce the following notation 
	\begin{equation}
		\label{eq: time reversal}
		u^{\star}(x,t)=u(x,T-t)
	\end{equation}
	for the time reversal of the function $u\colon \R^n_T\to\R$. We first derive a suitable integral identity (cf.~e.g.~\cite[Lemma~2.4]{KLW2022} or \cite[Lemma~4.3]{zimmermann2024calderon}) and use our improved Runge approximation (Theorem~\ref{thm: runge}) to conclude the desired result.
	
	\begin{lemma}[Integral identity]
		For any $\varphi_1, \varphi_2 \in C^\infty_c((W_1)_T)$, there holds that 
		\begin{equation}
			\label{eq: integral identity}
			\left\langle(\Lambda_{q_1}-\Lambda_{q_2})\varphi_1,\varphi_2^{\star}\right\rangle=\int_{\Omega_T} \LC q_1-q_2\RC \LC u_1-\varphi_1\RC \LC u_2-\varphi_2\RC^{\star}\, dxdt,
		\end{equation}
		where $u_j$ is the unique solution of 
		\[
		\begin{cases}
			\LC \partial_t^2 +(-\Delta)^s + q_j\RC u = 0 &\text{ in }\Omega_T \\
			u=\varphi_j  &\text{ in }(\Omega_e)_T,\\
			u(0)= \partial_t u(0)=0 &\text{ in }\Omega
		\end{cases}
		\]
		for $j=1,2$. 
	\end{lemma}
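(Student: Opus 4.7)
My plan is to pivot to the ``interior'' parts $v_j \vcentcolon= u_j - \varphi_j$, which lie in $L^2(0,T;\widetilde{H}^s(\Omega))$ because $\varphi_j$ vanishes in $\Omega$. Using that $q_j\varphi_j = 0$ in $\Omega_T$ for the same reason, a direct substitution into the equation for $u_j$ shows that $v_1$ is the unique solution of
\[
(\partial_t^2 + (-\Delta)^s + q_1) v_1 = -(-\Delta)^s \varphi_1 \text{ in }\Omega_T, \quad v_1 = 0\text{ in }(\Omega_e)_T, \quad v_1(0) = \partial_t v_1(0) = 0,
\]
while the time-reversal $v_2^\star$ solves the analogous problem with source $-(-\Delta)^s \varphi_2^\star$, potential $q_2$, and terminal data $v_2^\star(T) = \partial_t v_2^\star(T) = 0$. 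By Theorem~\ref{eq: thm well-posedness with q nonzero} (or rather its weak-solution incarnation via Corollary~\ref{cor: regular very weak sol are weak sol}), both $v_1$ and $v_2^\star$ enjoy the regularity $C([0,T];\widetilde{H}^s(\Omega))\cap C^1([0,T];\widetilde{L}^2(\Omega))$, with second time derivatives in $L^2(0,T;H^{-s}(\Omega))$.

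The core calculation is now symmetric: test the equation for $v_1$ against $v_2^\star$, and that of $v_2^\star$ against $v_1$. Applying integration by parts in time twice gives the adjointness $\int_0^T \langle \partial_t^2 v_1, v_2^\star \rangle\, dt = \int_0^T \langle v_1, \partial_t^2 v_2^\star\rangle\, dt$; all boundary contributions at $t=0,T$ vanish because of the matched initial/terminal conditions. The self-adjointness of $(-\Delta)^{s/2}$ on $L^2(\R^n)$ makes the diffusion pairing symmetric as well. Subtracting the two tested equations therefore annihilates the time and diffusion contributions, leaving
\[
\int_{\Omega_T}(q_1-q_2)\, v_1 v_2^\star\, dxdt = \int_{\R^n_T}\bigl[(-\Delta)^{s/2}\varphi_2^\star\, (-\Delta)^{s/2}v_1 - (-\Delta)^{s/2}\varphi_1\, (-\Delta)^{s/2}v_2^\star\bigr]\, dxdt.
\]

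To convert the right-hand side into a difference of DN-map pairings, I substitute $v_1 = u_1 - \varphi_1$ and $v_2^\star = u_2^\star - \varphi_2^\star$; the two purely exterior terms $\int (-\Delta)^{s/2}\varphi_1\, (-\Delta)^{s/2}\varphi_2^\star$ cancel by the symmetry of the $L^2$-inner product, and the definition \eqref{eq: modified linear DN map} of the DN map identifies the remainder as $\langle \Lambda_{q_1}\varphi_1, \varphi_2^\star\rangle - \langle \Lambda_{q_2}\varphi_2^\star, \varphi_1\rangle$. Turning this into $\langle (\Lambda_{q_1}-\Lambda_{q_2})\varphi_1, \varphi_2^\star\rangle$ requires the time-reversed symmetry $\langle \Lambda_{q_2}\varphi_2^\star, \varphi_1\rangle = \langle \Lambda_{q_2}\varphi_1, \varphi_2^\star\rangle$. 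Pleasantly, this identity is a byproduct of the same derivation: running the above calculation with $q_1 = q_2$ makes the left-hand side vanish, producing exactly the required symmetry relation.

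The one genuinely technical step is the rigorous justification of the time integration by parts. With the regularity recorded above, the pairings $\int_0^T \langle \partial_t^2 v_j, v_k^\star\rangle\, dt$ between $L^2(0,T;H^{-s}(\Omega))$ and $L^2(0,T;\widetilde{H}^s(\Omega))$ are well-defined, and the intermediate quantities $\partial_t v_j$ lie in $C([0,T];\widetilde{L}^2(\Omega))$, which suffices to push the two successive integrations by parts through a standard density/approximation argument. Everything else reduces to symmetric bilinear algebra, and the final step of replacing $u_1 v_2^\star$ by $(u_1 - \varphi_1) v_2^\star = v_1 v_2^\star$ inside $\Omega_T$ uses only that $\varphi_1 \equiv 0$ in $\Omega$.
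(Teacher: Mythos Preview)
Your approach is correct and essentially the same as the paper's: both pass to $v_j=u_j-\varphi_j$, invoke the time integration-by-parts identity $\int_0^T\langle\partial_t^2 v_1,v_2^\star\rangle\,dt=\int_0^T\langle v_1,\partial_t^2 v_2^\star\rangle\,dt$ (which the paper justifies via parabolic regularization, cf.\ \cite[Claim~4.2]{Semilinear-nonlocal-wave-paper} and the proof of Proposition~\ref{prop: weak sol are very weak sol}, the concrete ``density/approximation'' mechanism you allude to), and then use the symmetry of $(-\Delta)^{s/2}$. One small correction: the term you write as $\langle\Lambda_{q_2}\varphi_2^\star,\varphi_1\rangle$ should be $\langle\Lambda_{q_2}\varphi_2,\varphi_1^\star\rangle$, since a change of variables $t\mapsto T-t$ in $\int(-\Delta)^{s/2}\varphi_1\,(-\Delta)^{s/2}u_2^\star\,dxdt$ yields the latter (note $u_2^\star$ solves a \emph{backward} problem with data $\varphi_2^\star$, not the forward one); your self-adjointness argument by setting $q_1=q_2$ still works verbatim with the corrected labeling.
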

	
	\begin{proof}
		
		Let us start by observing that the function $(u_2-\varphi_2)^\star$ is the unique solution of 
		\begin{equation}
			\begin{cases}
				\LC \partial_t^2 +(-\Delta)^s + q_2 \RC u = -(-\Delta)^s\varphi_2^\star &\text{ in }\Omega_T \\
				u=0  &\text{ in }(\Omega_e)_T,\\
				u(T)= \partial_t u(T)=0 &\text{ in }\Omega
			\end{cases}
		\end{equation}
		and thus by \cite[Claim~4.2]{Semilinear-nonlocal-wave-paper} we know that there holds
		\begin{equation}
			\label{eq: integration by parts}
			\int_0^T \left\langle \partial_t^2 (u_1-\varphi_1),(u_2-\varphi_2)^\star \right\rangle\,dt=\int_0^T \left\langle \partial_t^2(u_2-\varphi_2)^\star,(u_1-\varphi_1)\right\rangle dt.
		\end{equation}
		Thus, using the PDEs for $u_1-\varphi_1$ and $(u_2-\varphi_2)^\star$, \eqref{eq: integration by parts} and the symmetry of the fractional Laplacian, we may calculate
		\small
		\begin{equation}
			\begin{split}
				&\quad \, \int_{\Omega_T}(q_1-q_2)(u_1-\varphi_1)(u_2-\varphi_2)^\star\,dxdt\\
				&=
				-\int_0^T\left\langle(\partial_t^2+(-\Delta)^s)(u_1-\varphi_1),(u_2-\varphi_2)^\star\right\rangle\,dt \\
				&\quad\, +\int_0^T\left\langle(\partial_t^2+(-\Delta)^s)(u_2-\varphi_2)^\star,u_1-\varphi_1\right\rangle dt\\
				&\quad \,-\int_0^T\left\langle(-\Delta)^s\varphi_1,(u_2-\varphi_2)^\star\right\rangle\,dt+\int_0^T\left\langle(-\Delta)^s\varphi_2^\star,u_1-\varphi_1\right\rangle dt\\
				&=
				-\int_0^T\left\langle(-\Delta)^s(u_1-\varphi_1),(u_2-\varphi_2)^\star\right\rangle\,dt+\int_0^T\left\langle(-\Delta)^s(u_2-\varphi_2)^\star,u_1-\varphi_1\right\rangle dt\\
				&\quad \,-\int_0^T\left\langle(-\Delta)^s\varphi_1,(u_2-\varphi_2)^\star\right\rangle\,dt+\int_0^T\left\langle(-\Delta)^s\varphi_2^\star,u_1-\varphi_1\right\rangle dt\\
				&=-\int_0^T\langle(-\Delta)^s u_2,\varphi_1^\star\rangle\,dt+\int_0^T\langle (-\Delta)^s u_1,\varphi_2^\star\rangle\,dt.
			\end{split}
		\end{equation}
		\normalsize
		\normalsize
		By definition of the DN map, we deduce that
		\begin{equation}
			\label{eq: self adjointness DN map}
			\left\langle\Lambda_{q_1}\varphi_1,\varphi_2^\star\right\rangle-\left\langle \Lambda_{q_2}\varphi_2,\varphi_1^\star\right\rangle=\int_{\Omega_T}(q_1-q_2)(u_1-\varphi_1)(u_2-\varphi_2)^\star\,dxdt,
		\end{equation}
		which completes the proof.
	\end{proof}
	
	\begin{proof}[Proof of Theorem \ref{thm: uniqueness potential}]
		We prove Theorem \ref{thm: uniqueness potential} by using the Runge argument. Using Theorem~\ref{thm: runge}, we can approximate any $\Psi_j \in C_c^{\infty}(\Omega)$, $j=1,2$, in $L^2(0,T;\widetilde{H}^s(\Omega))$ by sequences in the Runge sets $\mathscr{R}_{W_1}$ and $\mathscr{R}_{W_2}$, respectively. Since $q_j\in L^p(\Omega)$ satisfies the estimate \eqref{eq: computation for L2 estimate},  we may pass in \eqref{eq: integral identity} to the limit and hence taking the condition \eqref{same DN map in thm linear} into account, we arrive at 
		\[
		\int_{\Omega_T}(q_1-q_2)\Psi_1\Psi_2^\star \,dxdt=0
		\]
		for all $\Psi_1,\Psi_2\in C_c^{\infty}(\Omega_T)$. This ensures that $q_1=q_2$ in $\Omega$ as we wanted to show.
	\end{proof}
	
	\section{Well-posedness and inverse problems for nonlinear NWEQs}\label{sec: nonlinear wave}
	
	In this section, we study the inverse problems for NWEQs with polyhomogeneous nonlinearities. We start in Section~\ref{subsec: unique asym exp} by showing that for any asymptotically polyhomogeneous nonlinearity the expansion is unique. Then, in Section~\ref{sec: poly with growth}, by using similar techniques as in \cite{Semilinear-nonlocal-wave-paper} and our stronger Runge approximation (Theorem~\ref{thm: runge}), we demonstrate Theorem~\ref{Thm: recovery of nonlinearity}. Let us note that in the case of asymptotically polyhomogeneous nonlinearities, Theorem~\ref{Thm: recovery of nonlinearity} only shows that the expansion coefficients coincide and not the nonlinearities themselves. This could be improved to $f^{(1)}=f^{(2)}$ in the range $2s>n$ by imposing a suitable decay of the constants $C_N$, $\sum_{k=1}^{N-1}b_k$ as $N\to\infty$, appearing in Definition~\ref{def: polyhomogeneous}, \ref{asymp}, but we do not investigate this further in this article. 
	
	\subsection{Uniqueness of asymptotic expansion}
	\label{subsec: unique asym exp}
	
	Before discussing the proof of Theorem \ref{Thm: recovery of nonlinearity}, let us make the following observation. 
	
	\begin{lemma}
		\label{lemma: unique expansion}
		Let $\Omega\subset\R^n$ be a bounded domain. Assume that we have given a sequence $\LC r_k\RC_{k=1}^\infty\subset \R$ satisfying $0<r_k<r_{k+1}$ for all $k\in\N$. Let $f\colon \Omega\times\R\to\R$ be a Carath\'eodory function and suppose that there holds
		\[
		f\sim \sum_{k\geq 1}f_k \quad \text{and}
		\quad f\sim \sum_{k\geq 1}\wt{f}_k,
		\]
		for two sequences $(f_k)_{k\in\N}$, $(\wt{f}_k)_{k\in\N}$ satisfying the assumptions in Definition~\ref{def: polyhomogeneous}, \ref{asymp} for the same sequence $(r_k)_{k\in\N}$ but with possibly different constants $C_N$ and $\widetilde{C}_N$ for $N\in\N_{\geq 2}$. Then we have $f_k=\widetilde{f}_k$ for all $k\in\N$.
	\end{lemma}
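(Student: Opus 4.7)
The plan is to prove $f_k=\widetilde{f}_k$ by induction on $k$, exploiting homogeneity to amplify the smallness estimate \eqref{eq: asymp poly}.

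For the base case $k=1$, I would apply \eqref{eq: asymp poly} with $N=2$ to both expansions and subtract to get
\[
\bigl|f_1(x,\tau)-\widetilde{f}_1(x,\tau)\bigr|\leq (C_2+\widetilde{C}_2)\,|\tau|^{r_2+1}, \quad |\tau|\leq 1,\ x\in\Omega.
\]
Now fix any $\sigma\in\{-1,+1\}$ and $\lambda\in(0,1]$. Since $f_1$ and $\widetilde{f}_1$ are both $(r_1+1)$-homogeneous in $\tau$, substituting $\tau=\lambda\sigma$ yields
\[
\lambda^{r_1+1}\bigl|f_1(x,\sigma)-\widetilde{f}_1(x,\sigma)\bigr|\leq (C_2+\widetilde{C}_2)\,\lambda^{r_2+1},
\]
so that
\[
\bigl|f_1(x,\sigma)-\widetilde{f}_1(x,\sigma)\bigr|\leq (C_2+\widetilde{C}_2)\,\lambda^{r_2-r_1}.
\]
Since $r_2>r_1$ by assumption, letting $\lambda\to 0^+$ forces $f_1(x,\sigma)=\widetilde{f}_1(x,\sigma)$ for $\sigma=\pm 1$ and a.e.\ $x\in\Omega$, and homogeneity then propagates this identity to all $\tau\in\R$.

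For the inductive step, I would suppose $f_j=\widetilde{f}_j$ for all $j<k$. Applying \eqref{eq: asymp poly} with $N=k+1$ to both expansions, subtracting, and cancelling the coinciding lower-order terms gives
\[
\bigl|f_k(x,\tau)-\widetilde{f}_k(x,\tau)\bigr|\leq (C_{k+1}+\widetilde{C}_{k+1})\,|\tau|^{r_{k+1}+1},\quad |\tau|\leq 1.
\]
Repeating the scaling trick with the $(r_k+1)$-homogeneity of $f_k-\widetilde{f}_k$ and using $r_{k+1}>r_k$ yields $f_k(x,\sigma)=\widetilde{f}_k(x,\sigma)$ for $\sigma=\pm 1$, hence $f_k=\widetilde{f}_k$ on all of $\Omega\times\R$.

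There is no real obstacle here; the only point requiring a little care is making sure the homogeneity convention is compatible with the sign of $\tau$ (so that testing at $\sigma=\pm 1$ recovers all of $\R$), but this is built into Definition~\ref{def: polyhomogeneous}. The entire argument is a clean induction that leverages the strict monotonicity $r_k<r_{k+1}$ to turn the remainder bound into an exact identity at the $k$-th level.
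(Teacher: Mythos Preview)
Your proof is correct and follows essentially the same route as the paper: use \eqref{eq: asymp poly} with $N=2$, exploit the $(r_1+1)$-homogeneity to extract a factor $\lambda^{r_2-r_1}$, let $\lambda\to 0$, and then induct. You are in fact slightly more careful than the paper in explicitly testing at $\sigma=\pm 1$ to cover both half-lines; the paper only writes out $\tau=1$ and appeals to homogeneity.
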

	\begin{proof}
		By assumption, we may compute
		\begin{align}
			\big|f_1(x,1) - \widetilde f_1(x,1)\big| &=
			\tau^{r_1+1}\tau^{-r_1-1} \big|f_1(x,1) - \widetilde f_1(x,1)\big|\\
			&=\tau^{-r_1-1}\big|f_1(x,\tau) - \widetilde f_1(x,\tau)\big|\\
			&\leq \tau^{-r_1-1}\big(|f(x,\tau) - \widetilde f_1(x,\tau)|
			+|f(x,\tau) - \widetilde f_1(x,\tau)|\big)\\
			&\leq \big( C_2+\widetilde C_2 \big) \tau^{r_2-r_1}
		\end{align}
		for a.e. $x\in\Omega$ and $|\tau|\leq 1$. Thus, passing to the limit $\tau\to 0$ shows that $f_1(x,1)=\widetilde f_1(x,1)$ for a.e. $x\in\Omega$ and by homogeneity $f_1=\widetilde f_1$. Continuing inductively, we obtain $f_k=\widetilde f_k$ for all $k\geq 1$. 
	\end{proof}

	\subsection{Recovery of coefficients of polyhomogeneous nonlinearities}
	\label{sec: poly with growth}

	Let us start by recalling the following continuity result on Nemytskii operators.
	
	\begin{lemma}[Continuity of Nemytskii operators, {\cite[Theorem~2.2]{ambrosetti1995primer} and \cite[Lemma~3.6]{zimmermann2024calderon}}]
		\label{lemma: Continuity of Nemytskii operators}
		Let $\Omega\subset \R^n$ be a bounded domain, $T>0$, and $1\leq q,p<\infty$. Assume that $f\colon\Omega\times \R\to\R$ is a Carath\'eodory function satisfying
		\begin{equation}
			\label{eq: cond continuity nemytskii}
			|f(x,\tau)|\leq a+b|\tau|^{\alpha}
		\end{equation}
		for some constants $a,b\geq 0$ and $0<\alpha \leq \min(p,q)$. Then the Nemytskii operator $f$, defined by
		\begin{equation}
			\label{eq: Nemytskii operator}
			f(u)(x,t)\vcentcolon = f(x,u(x,t))
		\end{equation}
		for all measurable functions $u\colon \Omega_T\to\R$, maps continuously $L^p(\Omega)$ to $L^{p/\alpha}(\Omega)$ and $L^q(0,T;L^p(\Omega))$ into $L^{q/\alpha}(0,T;L^{p/\alpha}(\Omega))$.
	\end{lemma}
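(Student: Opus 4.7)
The plan is to prove the two statements in sequence, first settling the time-independent (spatial) case $L^p(\Omega)\to L^{p/\alpha}(\Omega)$, and then bootstrapping to the Bochner-valued case by applying the spatial result pointwise in $t$ and running a dominated-convergence argument in the time variable.

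For the spatial statement I would first verify that the Nemytskii operator actually lands in $L^{p/\alpha}(\Omega)$ whenever $u\in L^p(\Omega)$. The growth bound \eqref{eq: cond continuity nemytskii} together with the elementary inequality $(x+y)^{p/\alpha}\leq 2^{p/\alpha-1}(x^{p/\alpha}+y^{p/\alpha})$ yields $|f(u)(x)|^{p/\alpha}\lesssim a^{p/\alpha}+b^{p/\alpha}|u(x)|^{p}$, which is integrable on the bounded domain $\Omega$; this gives the quantitative estimate $\|f(u)\|_{L^{p/\alpha}(\Omega)}\lesssim a|\Omega|^{\alpha/p}+b\|u\|_{L^p(\Omega)}^{\alpha}$.

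For continuity, I would use the classical Krasnoselskii subsequence trick. Suppose $u_n\to u$ in $L^p(\Omega)$ but, contrary to the claim, $\|f(u_n)-f(u)\|_{L^{p/\alpha}(\Omega)}\not\to 0$. Then on some subsequence (still called $u_n$) the norm is bounded below by a fixed $\varepsilon_0>0$. Passing to a further subsequence, one arranges $u_n\to u$ almost everywhere and $|u_n|\leq g$ for some $g\in L^p(\Omega)$. By the Carath\'eodory property, $f(x,u_n(x))\to f(x,u(x))$ a.e., and the pointwise majorization $|f(u_n)(x)-f(u)(x)|^{p/\alpha}\lesssim a^{p/\alpha}+b^{p/\alpha}g(x)^{p}+|f(u)(x)|^{p/\alpha}$ provides a fixed $L^1(\Omega)$-dominant. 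Lebesgue's dominated convergence theorem then forces $f(u_n)\to f(u)$ in $L^{p/\alpha}(\Omega)$ along this subsequence, contradicting the lower bound.

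For the Bochner extension $L^q(0,T;L^p(\Omega))\to L^{q/\alpha}(0,T;L^{p/\alpha}(\Omega))$, I would use the spatial estimate slice-by-slice. Well-definedness follows from $\|f(u)(t)\|_{L^{p/\alpha}(\Omega)}^{q/\alpha}\lesssim a^{q/\alpha}|\Omega|^{q/p}+b^{q/\alpha}\|u(t)\|_{L^p(\Omega)}^{q}$, which is integrable on $(0,T)$ because $u\in L^q(0,T;L^p(\Omega))$; integration in $t$ gives the boundedness estimate. For continuity, given $u_n\to u$ in $L^q(0,T;L^p(\Omega))$, I would again pass to a subsequence such that $u_n(t)\to u(t)$ in $L^p(\Omega)$ for a.e.\ $t\in(0,T)$ with $\|u_n(t)\|_{L^p(\Omega)}\leq h(t)$ for some $h\in L^q(0,T)$. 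The spatial statement proved above yields $\|f(u_n)(t)-f(u)(t)\|_{L^{p/\alpha}(\Omega)}\to 0$ for a.e.\ $t$, while the growth bound furnishes a fixed $L^{q/\alpha}(0,T)$-dominant of the form $C(1+h(t)^{\alpha}+\|u(t)\|_{L^p(\Omega)}^{\alpha})$. Dominated convergence in the $t$-variable closes the argument, and the usual subsequence principle upgrades this to convergence of the full sequence.

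I do not expect a genuine obstacle: the only points that require care are the choice of the $L^p$-dominant $g$ (standard consequence of a.e.\ convergence extracted from $L^p$-convergence) and the exponent arithmetic ensuring $\alpha\leq \min(p,q)$ is used at the right moments, namely in verifying that the pointwise and $t$-pointwise majorants indeed lie in $L^1(\Omega)$ and $L^{q/\alpha}(0,T)$ respectively. Since both reductions are direct and the result is cited from \cite{ambrosetti1995primer} and \cite{zimmermann2024calderon}, a brief two-step proof along these lines should suffice.
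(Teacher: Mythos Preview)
The paper does not supply its own proof of this lemma; it is stated with citations to \cite[Theorem~2.2]{ambrosetti1995primer} and \cite[Lemma~3.6]{zimmermann2024calderon} and then used without further argument. Your proposal is correct and is precisely the standard Krasnoselskii-type argument one finds in those references: growth bound for well-definedness, subsequence-plus-dominated-convergence for continuity in the spatial variable, and then the same scheme applied slice-by-slice in $t$ for the Bochner case, with the condition $\alpha\leq\min(p,q)$ entering exactly where you indicate.
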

	
	We will use the notation \eqref{eq: Nemytskii operator} introduced in the previous lemma.
	
	\begin{proof}[Proof of Theorem \ref{Thm: recovery of nonlinearity}]
		
		\medskip
		\noindent Our goal is to show in the first step that the equality of the DN maps for two nonlinearities $f^{(1)}$ and $f^{(2)}$ implies $f^{(1)}(x,v)=f^{(2)}(x,v)$ for any solution to a linear wave equation (see \eqref{eq:ip linear wave}). Then in a second step, we use the Runge approximation property of the solutions $v$ in $L^2(0,T; \wt H^s(\Omega))$ to deduce that all expansion coefficients $f^{(1)}_k$, $f^{(2)}_k$ for $k\in \N$ agree. In the following, we will often abbreviate $f(x,u)=\vcentcolon f(u)$.
		
		Let $\eps>0$. We start by observing that for $j=1,2$ the unique (weak) solution $u_\eps^{(j)}$ of
		\begin{equation}
			\label{eq: linearization PDE main proof}
			\begin{cases}
				\partial_t^2u +(-\Delta)^su+f^{(j)}(u)=0  &\text{in }\Omega_T,\\
				u=\eps\varphi  &\text{in }(\Omega_e)_T,\\
				u(0)= \partial_t u(0)=0 &\text{in }\Omega,
			\end{cases}
		\end{equation}
		can be expanded as $u^{(j)}_\eps = \eps v+ R^{(j)}_\eps$, where $v$ and $R^{(j)}_{\eps}$ solve
		\begin{equation}\label{eq:ip linear wave}
			\begin{cases}
				\partial_t^2v +(-\Delta)^sv=0  &\text{in }\Omega_T,\\
				v=\varphi   &\text{in }(\Omega_e)_T,\\
				v(0)= \partial_t v(0)=0 &\text{in }\Omega
			\end{cases}    
		\end{equation}
		and 
		\begin{equation}\label{eq:remainder R}
			\begin{cases}
				\partial_t^2R +(-\Delta)^s R=  -f^{(j)}\big(u^{(j)}_\eps\big)  &\text{in }\Omega_T,\\
				R=0   &\text{in }(\Omega_e)_T,\\
				R(0)= \partial_t R(0)=0 &\text{in }\Omega,
			\end{cases}    
		\end{equation}
		respectively. By \eqref{same DN map in thm}, the UCP for the fractional Laplacian and \eqref{eq: linearization PDE main proof} guarantee
		\begin{equation}
			\label{eq: same sol and remainder}
			u_{\eps}\vcentcolon = u_{\eps}^{(1)}=u_{\eps}^{(2)},\,R_{\eps}\vcentcolon = R_{\eps}^{(1)}=R_{\eps}^{(2)}\text{ and } f^{(1)}(u_\eps)=f^{(2)}(u_\eps).
		\end{equation}

        Note that we have $f^{(j)}(x,0)=0$ for a.e. $x\in\Omega$ and for $j=1,2$. For serially polyhomogeneous nonlinearities, this follows from the pointwise convergence \eqref{eq: pointwise conv ser poly} and the homogeneity of the expansion coefficients $f_k^{(j)}$ for $k\in \N$ and $j=1,2$. In contrast, for asymptotically polyhomogeneous nonlinearities it is a consequence of \eqref{eq: asymp poly} and again the homogeneity of the functions $f_k^{(j)}$. Next, we claim that we have
        \begin{equation}
        \label{eq: homogeneity estimate f j}
        |f^{(j)}(x,\tau)| \lesssim |\tau|^{r_1+1}+|\tau|^{r_{\infty}+1}
        \end{equation}
        for a.e. $x\in\Omega$ and all $\tau\in\R$. First, suppose that $f^{(j)}$ is serially polyhomogeneous. Notice that \eqref{eq: growth for bk} and the ratio test imply the convergence
        \begin{equation}
        \label{eq: conv of series bk}
            0\leq \sum_{k\geq 1}b^{(j)}_k<\infty
        \end{equation}
        and hence, using \eqref{eq: continuity nemytskii} and the Weierstrass M-test, we can conclude that
        \begin{equation}
        \label{eq: uniform conv of series}
            \sum_{k\geq 1}f^{(j)}_k\text{ converges absolutely and uniformly for a.e. }x\in\Omega\text{ and all }|\tau|\leq 1.
        \end{equation}
        On the one hand, for $|\tau|\leq 1$, we use \eqref{eq: continuity nemytskii} and \eqref{eq: conv of series bk} to derive the following estimate
        \begin{equation}
        \label{eq: homogeneity estimate serial small tau}
            |f^{(j)}(x,\tau)|\leq \sum_{k\geq 1}|f^{(j)}_k(x,\tau)| \leq  \sum_{k\geq 1}b^{(j)}_k|\tau|^{r_k+1}\leq  |\tau|^{r_1+1}\sum_{k\geq 1}b_k^{(j)}.
        \end{equation}
        On the other hand, for $|\tau|>1$, we use the pointwise convergence \eqref{eq: pointwise conv ser poly}, the homogeneity of $f_k$ and \eqref{eq: homogeneity estimate serial small tau} to get
        \begin{equation}
        \label{eq: homogeneity estimate serial large tau}
        \begin{split}
            |f^{(j)}(x,\tau)|&=\bigg|\sum_{k\geq 1}f^{(j)}_k(x,\tau)\bigg|=\bigg|\sum_{k\geq 1}|\tau|^{r_k+1}f^{(j)}_k(x,\tau/|\tau|)\bigg|\\
            &\leq  |\tau|^{r_{\infty}+1}\sum_{k\geq 1}|f^{(j)}_k(x,\tau/|\tau|)|\\
            &\leq|\tau|^{r_{\infty}+1} \sup_{\sigma=\pm 1}\sum_{k\geq 1}|f^{(j)}_k(x,\sigma)|\\
            &\leq |\tau|^{r_{\infty}+1} \sum_{k\geq 1}b_k^{(j)}.
        \end{split}
        \end{equation}
        So, combining \eqref{eq: homogeneity estimate serial small tau} and \eqref{eq: homogeneity estimate serial large tau} shows that \eqref{eq: homogeneity estimate f j} holds for serially polyhomogeneous nonlinearities up to a constant, which is given by $\sum_{k\geq 1} b_k^{(j)}$, for $j=1,2$.
        
        Next, assume that $f^{(j)}$ is asymptotically polyhomogeneous. In this case, \eqref{eq: asymp poly} demonstrates that for $|\tau|\leq 1$
        \begin{equation}
        \label{estim for f 1}
        \begin{split}
             |f^{(j)}(x,\tau)|&\leq |f^{(j)}(x,\tau)-f^{(j)}_1(x,\tau)|+|f^{(j)}_1(x,\tau)|\\
             &\leq C_2 |\tau|^{r_2+1}+ b_1|\tau|^{r_1+1} \\
             &\lesssim |\tau|^{r_1+1},
        \end{split}
        \end{equation}
        showing that the estimate \eqref{eq: homogeneity estimate f j} also holds with asymptotically polyhomogeneous case
        for a.e. $x\in\Omega$ and all $\tau\in\R$. 
        Hence, \eqref{eq: homogeneity estimate f j} also holds in this case.

		Continuing as in \cite[Proof of Theorem~1.1]{Semilinear-nonlocal-wave-paper}, we obtain the estimates
		\begin{enumerate}[(E1)]
			\item\label{item 1 uniqueness} $\left\| f^{(j)}(\cdot,u_{\eps})\right\|_{L^2(\Omega_T)} \lesssim \Vert u_\eps \Vert_{L^\infty(0,T;H^s(\R^n))}^{r_1+1} + \Vert u_\eps \Vert_{L^\infty(0,T;H^s(\R^n))}^{r_\infty+1}$,
			\item\label{item 2 uniqueness} $
			\Vert R_\eps \Vert_{L^\infty(0,T;\wt H^s(\Omega))} + \Vert \p_t R_\eps
			\Vert_{L^\infty(0,T; L^2(\Omega))}$\\
			$\lesssim \Vert u_\eps \Vert_{L^\infty(0,T;H^s(\R^n))}^{r_1+1} + \Vert u_\eps \Vert_{L^\infty(0,T;H^s(\R^n))}^{r_\infty+1}$
			\item\label{item 3 uniqueness} $ \Vert u_\eps \Vert_{L^{\infty}(0,T; H^s(\R^n))} +
			\Vert \p_t u_\eps \Vert_{L^\infty(0,T; L^2(\R^n))}\lesssim\eps$.
		\end{enumerate}
		Therefore, we deduce that 
		\begin{equation}\label{eq: estimate for R wrt eps}
			\begin{split}
				&\quad \, \Vert R_\eps \Vert_{L^\infty(0,T;\wt H^s(\Omega))} + \Vert \p_t R_\eps \Vert_{L^\infty(0,T; L^2(\Omega))}\lesssim \eps^{r_1+1}+\eps^{r_\infty+1}.    
			\end{split}
		\end{equation}
		Now, \eqref{eq: same sol and remainder} implies
		\begin{equation}
			\label{eq: identified potentials with nonlinear sol}
			f^{(1)}(\eps v + R_\eps) = f^{(2)}(\eps v+R_\eps).    
		\end{equation}
		Our next goal is to determine iteratively the terms in the series
		\[
		\sum_{k\geq 1} f^{(j)}_k(\tau),\quad j=1,2,
		\]
		starting from $f_1^{(j)}$. We distinguish two cases $2s<n$ and $2s\geq n$ as follows:\\

		\textbf{Case $2s<n$}.\\
		
		\noindent As $r_\infty$ satisfies $0<r_\infty\leq \frac{2s}{n-2s}$, we have
		\begin{equation}
			\label{eq: rinfty +1}
			1<1+r_k<1+r_\infty\leq \frac{n}{n-2s}<\frac{2n}{n-2s}=\vcentcolon p
		\end{equation}
        for all $k\in\N$.
		Using \eqref{eq: estimate for R wrt eps} and the Sobolev embedding, we see that
		\begin{equation}
			\label{eq: conv of remainder main}
			\eps^{-1}R_{\eps}\to 0\quad\text{in}\quad L^q(0,T;L^{p}(\Omega))
		\end{equation}
		as $\eps\to0$ for any $1\leq q \leq \infty$. 
		Through the estimate 
		\begin{equation}
			|f^{(j)}(x,\tau)| \leq A^{(j)} + B^{(j)}|\tau|^{r_\infty+1},
		\end{equation}
		for appropriate constants $A^{(j)},B^{(j)}\geq 0$ (see~Assumption~\ref{main assumptions on nonlinearities} or \eqref{eq: homogeneity estimate f j}), and \eqref{eq: continuity nemytskii}, we infer from Lemma~\ref{lemma: Continuity of Nemytskii operators} and \eqref{eq: rinfty +1} that the mappings
		\begin{align}
			&f^{(j)}\colon L^q(0,T; L^p(\Omega)) \to L^\frac{q}{r_\infty+1}(0,T; L^\frac{p}{r_\infty+1}(\Omega)),\\
			&f^{(j)}_k\colon L^q(0,T; L^p(\Omega)) \to L^\frac{q}{r_k+1}(0,T; L^\frac{p}{r_k+1}(\Omega)),\quad k\in\N,
		\end{align}
		are continuous as long as $q$ is chosen such that $q\geq r_\infty+1$ and $q\geq r_k+1$, respectively.

		Next, we use the inequalities $r_k< r_\ell\leq r_{\infty}$, $k\leq \ell$, and the boundedness of $\Omega$ to deduce the inclusion
		\begin{equation}
			\label{eq: embedding different rk}
			L^\frac{q}{r_k+1}(0,T; L^\frac{p}{r_k+1}(\Omega)) \hookrightarrow L^\frac{q}{r_\ell+1}(0,T; L^\frac{p}{r_\ell+1}(\Omega)),
		\end{equation}
		which ensures that the map
		\begin{equation}
			\label{eq: nemytskii argument}
			f^{(j)}_k\colon L^q(0,T; L^p(\Omega)) \to L^\frac{q}{r_\ell+1}(0,T; L^\frac{p}{r_\ell+1}(\Omega)),\quad 0<k\leq \ell\leq \infty
		\end{equation}
		is continuous.\\
		
		\noindent
		{\bf Serially polyhomogeneous nonlinearity for $\mathbf{k=1}$}:
		Multiplying \eqref{eq: identified potentials with nonlinear sol} by $\eps^{-r_1-1}$ and using the homogeneity of $f_k^{(j)}$, we have pointwise the identity
		\begin{equation}
        \label{eq: eps nonlinearity}
			\eps^{-r_1-1} f^{(j)}(u_\eps) = 
			\sum_{k=1}^\infty f_k^{(j)}(\eps^{-\frac{r_1+1}{r_k+1}} u_\eps),\quad j=1,2,    
		\end{equation}
		where 
		\begin{equation}
			\label{eq: exponents}
			\begin{cases}
				-\frac{r_1+1}{r_k+1} = -1, & \text{if }k=1,\\
				-\frac{r_1+1}{r_k+1} >-1, & \text{if }k\geq 2.
			\end{cases}
		\end{equation}
		Let us denote by $C_S>0$ the optimal Sobolev constant for the embedding $H^s(\R^n)\hookrightarrow L^p(\R^n)$ and by $D>0$ the constant in the estimate \ref{item 3 uniqueness}. Furthermore, recall that we have
		\begin{equation}
			\label{eq: assump for proof thm 16}
			\big|f^{(j)}_k(x,\tau)\big|\leq b^{(j)}_k|\tau|^{r_k+1}, \quad  r_k<r_{k+1}\leq r_{\infty}
		\end{equation}
		for $j=1,2$ (see~\eqref{eq: continuity nemytskii}). Also note that $p=2n/(n-2s)$ (see~\eqref{eq: rinfty +1}) and sufficiently large $q$ satisfy
		\[
		1\leq \frac{p}{r_\infty-r_k}<\infty,\,1\leq \frac{q}{r_\infty-r_k}<\infty
		\]
        (cf., e.g., \eqref{eq: rinfty +1}). Hence, we may decompose
		\begin{equation}
			\frac{r_\infty+1}{p}=\frac{r_k+1}{p}+\frac{r_\infty-r_k}{p}\text{ and }\frac{r_\infty+1}{q}=\frac{r_k+1}{q}+\frac{r_\infty-r_k}{q}.
		\end{equation}
		Thus, for $0<\eps\leq 1$ and $k\geq 1$, we may compute
		\begin{equation}
			\label{eq: uniform upper bound for fkueps}
			\begin{split}
				&\quad \, \big\|f_k^{(j)}\big(\eps^{-\frac{r_1+1}{r_k+1}}u_\eps\big)\big\|_{L^{\frac{q}{r_\infty+ 1}}L^{\frac{p}{r_\infty +1}}}\\
				&=\eps^{-(r_1+1)} \big\|f_k^{(j)}(u_\eps)\big\|_{L^{\frac{q}{r_\infty+ 1}}L^{\frac{p}{r_\infty +1}}}\\
				&\leq \underbrace{\eps^{-(r_1+1)}|\Omega|^{\frac{r_\infty-r_k}{p}}T^{\frac{r_\infty - r_k}{q}}\|f_k^{(j)}(u_\eps)\|_{L^{\frac{q}{r_k+1}} L^{\frac{p}{r_k+1}}}}_ {\text{by H\"older's inequality}}\\
				& \leq \underbrace{\eps^{-(r_1+1)}|\Omega|^{\frac{r_\infty-r_k}{p}}T^{\frac{r_\infty - r_k}{q}}b_k^{(j)}\|u_\eps\|_{L^q L^p}^{r_k+1}}_{\text{by }\eqref{eq: assump for proof thm 16}}\\
				& \leq \eps^{-(r_1+1)}|\Omega|^{\frac{r_\infty-r_k}{p}}T^{\frac{r_\infty +1}{q}}b_k^{(j)}\|u_\eps\|_{L^{\infty} L^p}^{r_k+1}\\
				&\leq \underbrace{\eps^{-(r_1+1)}C_s^{r_k+1}|\Omega|^{\frac{r_\infty-r_k}{p}}T^{\frac{r_\infty +1}{q}}b_k^{(j)}\|u_\eps\|_{L^{\infty} H^s}^{r_k+1}}_{\text{by Sobolev's inequality}}\\
				&\leq \underbrace{\eps^{r_k-r_1}(C_sD)^{r_k+1}|\Omega|^{\frac{r_\infty-r_k}{p}}T^{\frac{r_\infty +1}{q}}b_k^{(j)}}_{\text{by \ref{item 3 uniqueness}}}\\
				& \leq \eps^{r_k-r_1}(\max(1,C_sD))^{r_k+1}(\max(1,|\Omega|))^{\frac{r_\infty-r_k}{p}}T^{\frac{r_\infty +1}{q}}b_k^{(j)}\\
				& \leq \underbrace{(\max(1,C_sD))^{r_\infty+1}(\max(1,|\Omega|))^{\frac{r_\infty}{p}}T^{\frac{r_\infty +1}{q}}b_k^{(j)}}_{\text{by }\eps\leq 1}\\
				& =\vcentcolon M_k^{(j)},
			\end{split}
		\end{equation}
	for $k\in \N$ and $j=1,2$, where we abbreviated $L^{\alpha}(0,T;X(U))$ as $L^{\alpha}X$ for any Banach space $X(U)$ over a spatial domain $U\subset\R^n$ and $1\leq {\alpha}\leq\infty$. Since the constants in $M_k^{(j)}$ in front of $b_k^{(j)}$ are independent of $k$, we can use \eqref{eq: growth for bk} and the ratio test to get 
		\begin{equation}
			\label{eq: summability of Mk}
			\sum_{k\geq 1}M_k^{(j)}<\infty.
		\end{equation}
        Thus, the Weierstrass M-test ensures that
        \begin{equation}
        \label{eq: uniform and absolute convergence}
            \sum_{k\geq 1}f_k^{(j)}\big(\eps^{-\frac{r_1+1}{r_k+1}}u_\eps\big)\text{ converges uniformly and absolutely in }L^{\frac{q}{r_\infty+ 1}}L^{\frac{p}{r_\infty +1}},
        \end{equation}
        for $0<\eps\leq 1$.
        By Tannery's theorem\footnote{Alternatively, one can use the well-known Moore--Osgood theorem together with \eqref{eq: uniform and absolute convergence}.} \cite{loya2017amazing}, this is essentially a consequence of Lebesgue's dominated convergence theorem, \eqref{eq: eps nonlinearity}, and the fact that
        \begin{equation}
        \label{eq: convergence of nonlinearities}
        f_k^{(j)}\big(\eps^{-\frac{r_1+1}{r_k+1}}u_\eps\big)\to\begin{cases}
            0, &\text{ for }k\geq 2\\
            f^{(j)}(v),&\text{ for }k=1
        \end{cases}
        \quad \text{as }\eps\to 0
        \end{equation}
        (see \eqref{eq: uniform upper bound for fkueps} for $k\geq 2$ and \eqref{eq: conv of remainder main}, \eqref{eq: nemytskii argument} for $k=1$), we may deduce that
        \begin{equation}
			\lim_{\eps\to 0}\eps^{-r_1-1} f^{(j)}(u_\eps)=\sum_{k\geq 1}\lim_{\eps\to 0}f_k^{(j)}\big(\eps^{-\frac{r_1+1}{r_k+1}} u_\eps\big)=f_1^{(j)}(v)
		\end{equation}
		in $L^{\frac{q}{r_\infty +1}}(0,T;L^{\frac{p}{r_\infty + 1}}(\Omega))$. Finally, combining this with \eqref{eq: identified potentials with nonlinear sol} yields
		\begin{equation}
			\label{eq: equality first order terms serial}
			f_1^{(1)}(v)=f_1^{(2)}(v)
		\end{equation}
		in $L^\frac{q}{r_\infty+1}(0,T; L^\frac{p}{r_\infty+1}(\Omega))$.\\

		\noindent
		{\bf Asymptotically polyhomogeneous nonlinearity for $\mathbf{k=1}$}: 
		In this case, using Definition~\ref{def: polyhomogeneous}~\ref{asymp}, we get for $N=2$ that
        \[
		\big| f^{(j)}(x,\tau) - f_1^{(j)}(x,\tau) \big|\leq C_2 \left|\tau\right|^{r_2+1}
		\]
        when $|\tau|\leq 1$ and
        \begin{align}
        \big| f^{(j)}(x,\tau) - f_1^{(j)}(x,\tau) \big|\leq \big|f^{(j)}(x,\tau)|+|f_1^{(j)}(x,\tau)\big| \lesssim |\tau|^{r_\infty+1}
        \end{align}
        when $|\tau|>1$ for a.e. $x\in\Omega$, yielding
		\begin{equation}\label{eq: estim for f ph}
		\big| f^{(j)}(u_\eps) - f_1^{(j)}(u_\eps) \big|\lesssim \left|u_\eps\right|^{r_2+1} + \left|u_\eps\right|^{r_\infty+1}.
		\end{equation}
        Above the term $|u_\eps|^{r_2+1}$ can be estimated in $L^\frac{q}{r_\infty+1}L^\frac{q}{r_\infty+1}$ using \eqref{eq: uniform upper bound for fkueps} as
        \[
        \big\Vert |u_\eps|^{r_2+1}\big\|_{L^{\frac{q}{r_\infty+ 1}}L^{\frac{p}{r_\infty +1}}}
        \lesssim \Vert u_\eps\Vert^{r_2+1}_{L^\infty H^s}.
        \]
        The latter term of \eqref{eq: estim for f ph} can be directly estimated as
        \begin{align*}
        \Vert |u_\eps|^{r_\infty+1}\Vert_{L^{\frac{q}{r_\infty+ 1}}L^{\frac{p}{r_\infty +1}}}
        \leq \Vert u_\eps \Vert_{L^qL^p}^{r_\infty+1}
        \lesssim \Vert u_\eps \Vert_{L^\infty H^s}^{r_\infty+1}.  
        \end{align*}
        

		Multiplying the above inequality by $\eps^{-r_1-1}$ with $0<\eps<1$, recalling $r_2>r_1$, and using \ref{item 3 uniqueness} we get in the $L^\frac{q}{r_{\infty}+1}L^\frac{p}{r_{\infty}+1}$-norm
		\begin{equation}\label{eq: reco of first term}
			\begin{split}
				&\big\| \eps^{-r_1-1} f^{(j)}(u_{\eps}) - f_1^{(j)}(\eps^{-1} u_{\eps})\big\|_{L^\frac{q}{r_{\infty}+1}L^\frac{p}{r_{\infty}+1}}  \lesssim \eps^{r_2-r_1}+\eps^{r_\infty-r_1}\to 0
			\end{split}
		\end{equation}
		as $\eps\to 0$. Therefore, using \eqref{eq: nemytskii argument} and \eqref{eq: reco of first term}, we deduce that
		\begin{equation}
			f_1^{(j)}(v) =
			\lim_{\eps\to 0} f_1^{(j)}(\eps^{-1}u_\eps) = 
			\lim_{\eps\to 0} \eps^{-r_1-1} f^{(j)}(u_\eps)
		\end{equation}
		in $L^\frac{q}{r_{\infty}+1}(0,T;L^\frac{p}{r_{\infty}+1}(\Omega))$. So, taking into account \eqref{eq: identified potentials with nonlinear sol}, we get 
		\begin{equation}
			\label{eq: equality first order terms asymp}
			f_1^{(1)}(v)=f_1^{(2)}(v)
		\end{equation}
		in $L^\frac{q}{r_{\infty}+1}(0,T; L^\frac{p}{r_{\infty}+1}(\Omega))$. \\

		Next, let $\Psi\in C_c^{\infty}(\Omega_T)$. By Theorem~\ref{thm: runge}, there exists a sequence $(\psi_k)_{k\in\N}\subset C_c^{\infty}((W_1)_T)$ such that the unique solutions $(v_k)_{k\in\N}$ of 
		\[
		\begin{cases}
			\partial_t^2v_k +(-\Delta)^sv_k=0  &\text{in }\Omega_T,\\
			v_k=\psi_k  &\text{in }(\Omega_e)_T,\\
			v_k(0)= \partial_t v_k(0)=0 &\text{in }\Omega
		\end{cases}    
		\]
		satisfy $v_k-\psi_k\to \Psi$ in $L^2(0,T;\widetilde{H}^s(\Omega))$ as $k\to\infty$. Up to extracting a subsequence, we have, by Sobolev's embedding theorem, that there holds
		\[
		v_k(t)\to \Psi(t)\quad \text{in}\quad L^{p}(\Omega)
		\]
		for a.e. $t\in[0,T]$. Hence, by Lemma~\ref{lemma: Continuity of Nemytskii operators}, H\"older's inequality, and \eqref{eq: equality first order terms serial} or \eqref{eq: equality first order terms asymp}, we get
		\[
		f_1^{(1)}(\Psi(t))=\lim_{k\to\infty}f_1^{(1)}(v_k(t))=\lim_{k\to\infty}f_1^{(2)}(v_k(t))=f_1^{(2)}(\Psi(t))
		\]
		in $L^{\frac{p}{r_\infty+1}}(\Omega)$ (or $L^{\frac{p}{r_2+1}}(\Omega)$)
		for a.e. $t\in[0,T]$. As $f_1^{(j)}$ are Carath\'eodory functions this needs to hold for all $t\in [0,T]$ and hence
		\[
		f_1^{(1)}(x,\Psi(x,t))=f_1^{(2)}(x,\Psi(x,t))
		\]
		for all $(x,t)\in\Omega_T$.

		Now, let us fix $t_0\in (0,T)$ and $x_0\in\Omega$. Then we choose $\Psi(x,t)=\eta(t)\Phi(x)$ with $\eta\in C_c^{\infty}((0,T))$ and $\Phi\in C_c^{\infty}(\Omega)$, where $\eta,\Phi$ satisfy $\eta(t)=1$ in a neighborhood of $t_0$ and $\Phi(x)=1$ in a neighborhood of $x_0$. Therefore, evaluating the previous relation at $t=t_0$ we obtain
		\[
		f_1^{(1)}(x,\Phi(x))=f_1^{(2)}(x,\Phi(x))
		\]
		for a.e. $x\in\Omega$. This gives $f_1^{(1)}(x_0,1)=f_1^{(2)}(x_0,1)$. Now, the homogeneity assumptions on $f_k^{(j)}$ ensures that
		$f_1^{(1)}(x,\rho)=f_1^{(2)}(x,\rho)$ for all $x\in \Omega$ and $\rho\in\R$.
		
		Using a similar approach, one can inductively recover the higher-order terms. In fact, one can argue as follows.\\
		
		\noindent
		{\bf Serially polyhomogeneous nonlinearity for $\mathbf{k\geq 2}$}: 
		We start by introducing the function
		\[
		f^{(j),2}\vcentcolon = f^{(j)}-f^{(1)}_1= \sum_{k\geq 2}f_k^{(j)}
		\]
        for $j=1,2$ and we observe that $f^{(j),2}$ is again Carath\'eodory function satisfying the same properties as $f^{(j)}$ itself. Then, $f^{(1)}_1=f^{(2)}_1$ and \eqref{eq: same sol and remainder} imply that
		\[
		f^{(1),2}(u_\eps)=f^{(2),2}(u_\eps).
		\]
		Repeating the same proof as above, but this time multiplying with $\eps^{-r_2-1}$, we deduce $f^{(1)}_2=f^{(2)}_2$. Thus, iteratively, we get $f^{(1)}_k=f^{(2)}_k$ for any $k\in\N$.\\
		
		\noindent
		{\bf Asymptotically polyhomogeneous nonlinearity for $\mathbf{k\geq 2}$}:  Using $f^{(1)}_1=f^{(2)}_1$, \eqref{eq: asymp poly} for $N=3$, Sobolev's embedding and \ref{item 3 uniqueness}, we get
		\[
		\begin{split}
			&\quad\, \|(\eps^{-r_2-1}f^{(1)}(u_\eps)-f_2^{(1)}(\eps^{-1}u_\eps))-(\eps^{-r_2-1}f^{(2)}(u_\eps)-f_2^{(2)}(\eps^{-1}u_\eps))\|_{L^{\frac{q}{r_{\infty}+1}}L^{\frac{p}{r_{\infty}+1}}}\\
            &=\eps^{-r_2-1}\|(f^{(1)}(u_\eps)-f_2^{(1)}(u_\eps))-(f^{(2)}(u_\eps)-f_2^{(2)}(u_\eps))\|_{L^{\frac{q}{r_{\infty}+1}}L^{\frac{p}{r_{\infty}+1}}}\\
            &=\eps^{-r_2-1}\|(f^{(1)}(u_\eps)-f_2^{(1)}(u_\eps)-f_1^{(1)}(u_\eps))-(f^{(2)}(u_\eps)-f_2^{(2)}(u_\eps)-f_1^{(2)}(u_\eps))\|_{L^{\frac{q}{r_{\infty}+1}}L^{\frac{p}{r_{\infty}+1}}}\\
			&\leq \eps^{-r_2-1}\sum_{j=1,2}\|f^{(j)}(u_\eps)-f^{(j)}_2(u_\eps)-f_1^{(j)}(u_\eps)\|_{L^{\frac{q}{r_{\infty}+1}}L^{\frac{p}{r_{\infty}+1}}}\\
            &\lesssim \eps^{-r_2-1}\|u_\eps\|^{r_3+1}_{L^{q}L^{p}}\\
			&\lesssim \eps^{r_3-r_2}\to 0
		\end{split}
		\]
		as $\eps\to 0$. Taking into account \eqref{eq: nemytskii argument}, this guarantees
		\[
		\begin{split}
			f_2^{(1)}(v)-f_2^{(2)}(v)&=\lim_{\eps\to 0}(f_2^{(1)}(\eps^{-1}u_\eps)-f_2^{(2)}(\eps^{-1}u_\eps))\\
			&=\lim_{\eps\to 0}\eps^{-r_2-1}(f^{(1)}(u_\eps)-f^{(2)}(u_\eps))=0
		\end{split}
		\]
		in $L^{\frac{q}{r_{\infty}+1}}L^{\frac{p}{r_{\infty}+1}}$. Now, one can repeat the above argument to find that $f_2^{(1)}=f_2^{(2)}$. Therefore, we iteratively get $f_k^{(1)}=f_k^{(2)}$ for all $k\in\N$.\\
		
		\textbf{Case $2s\geq n$}.\\
		
		\noindent The proof is almost the same as the Sobolev embedding guarantees that we have  $H^s(\R^n)\hookrightarrow L^p(\R^n)$ for any $2\leq p<\infty$ (see \cite{Ozawa} for the critical case $2s=n$). Moreover, let us note that in the supercritical case $2s>n$, we only need \eqref{eq: asymp poly} for $|\tau|\leq 1$ by the Sobolev embedding $H^s(\R^n)\hookrightarrow L^{\infty}(\R^n)$ and the estimate \ref{item 3 uniqueness}. \\
		
		Hence, we have shown that the expansion coefficients of the nonlinearities $f^{(j)}$, $j=1,2$, coincide in both cases and we can conclude the proof.
	\end{proof}
	
	\appendix
	
	\section{Proof of Lemma~\ref{spectral lemma}}
	\label{sec: proof of spectral lemma}
	
	In this appendix, we provide the proof of the spectral theoretic lemma, Lemma~\ref{spectral lemma}. We again denote by $\langle\cdot,\cdot\rangle$ the duality pairing between $\widetilde{H}^s(\Omega)$ and $H^{-s}(\Omega)$,
    where the spaces $\widetilde{H}^s(\Omega)$, $H^{-s}(\Omega)$ are
    endowed with the norms $\|\cdot\|_{\widetilde{H}^s(\Omega)}$ and $\norm{\cdot}_{H^{-s}(\Omega)}$ as before. 
	
	\begin{proof}[Proof of Lemma~\ref{spectral lemma}]
		We start by constructing the sequence of eigenvalues $(\lambda_k)_{k\in\N}$.
		
		Let $L\colon \wt{L}^2(\Omega)\to \wt{L}^2(\Omega)$ be the compact self-adjoint operator given by $L=K\circ S$, where
		\begin{equation}
			\begin{split}
				S\colon \wt{L}^2(\Omega)\to \widetilde{H}^s(\Omega), \quad F\mapsto u
			\end{split}
		\end{equation} 
		is the source-to-solution map of the problem
		\[
		\begin{cases}
			(-\Delta)^su =F &\text{ in }\Omega,\\
			u=0  &\text{ in }\Omega_e
		\end{cases}
		\]
		and $K\colon \widetilde{H}^s(\Omega)\to \wt{L}^2(\Omega)$ denotes the usual inclusion, which is compact by the Rellich-Kondrachov theorem. Note that the solution map $S$ is well-defined and continuous by the Lax-Milgram theorem. Hence, it is clear that $L$ is compact. The operator $L$ is also self-adjoint, because the related bilinear form to $(-\Delta)^s$ is symmetric. Furthermore, if $F\in \wt{L}^2(\Omega)$ and $u=LF$, then we have
		\begin{equation}
			\label{eq: positivity}
			\langle LF,F\rangle_{L^2(\Omega)}=\langle u,F\rangle_{L^2(\Omega)}=\big\langle (-\Delta)^{s/2}u,(-\Delta)^{s/2} u \big\rangle_{L^2(\R^n)}=\|u\|_{\widetilde{H}^s(\Omega)}^2\geq 0.
		\end{equation}
		If $F\neq 0$, then we have $\langle LF,F\rangle_{L^2(\Omega)}>0$ as otherwise $u$ would vanish and hence $F=0$. Therefore, $L$ is positive definite with $\ker L=\{0\}$. By the spectral theory for compact self-adjoint operators, we deduce that $\sigma(L)=\sigma_p(L)\subset \R_+$ is at most countable with accumulation point $\mu=0$. Here, $\sigma_p(L)$ denotes the point spectrum of $L$, that is, the set of eigenvalues. Moreover, for any $\mu\in \sigma_p(L)$ its related eigenspace $\text{ker}(L-\mu)$ is finite dimensional. Next, observe that $\mu>0$ is an eigenvalue of $L$ if and only if $\lambda=1/\mu$ is an eigenvalue for $(-\Delta)^s$ and $F\in \wt{L}^2(\Omega)$ is an eigenfunction of $L$ with eigenvalue $\mu>0$ if and only if $u\vcentcolon =SF\in \widetilde{H}^s(\Omega)$ is an eigenfunction of $(-\Delta)^s$ with eigenvalue $\lambda=1/\mu$. Therefore, we may conclude that $\sigma_p((-\Delta)^s)$ is an unbounded countable sequence and the corresponding eigenspaces are finite-dimensional. \\
		
		\textit{Step 1. The first eigenvalue.}\\ 
		
		\noindent Let us define
		\begin{equation}
			\label{eq: smallest eigenvalue}
			\lambda_1=\inf \big\{\|u\|_{\widetilde{H}^s(\Omega)}^2\,;\, u\in \widetilde{H}^s(\Omega),\,\|u\|_{L^2(\Omega)}=1 \big\}.
		\end{equation}
		We assert that $\lambda_1>0$ is the smallest eigenvalue associated to $(-\Delta)^s$. To see this, let $(u_k)_{k\in\N}\subset \widetilde{H}^s(\Omega)$ be a minimizing sequence, that is
		\[
		\left\|u_k\right\|_{L^2(\Omega)}=1\text{ and }\lim_{k\to\infty}\left\|u_k \right\|_{\widetilde{H}^s(\Omega)}^2=\lambda_1.
		\]
		In particular, this implies that $\LC u_k\RC_{k\in\N}\subset\widetilde{H}^s(\Omega)$ is uniformly bounded and hence up to extracting a subsequence there exists $\phi_1\in \widetilde{H}^s(\Omega)$ such that $u_k\weak \phi_1$ in $\widetilde{H}^s(\Omega)$ as $k\to\infty$. Up to extraction of a further subsequence, we can assume by the Rellich--Kondrachov theorem that $u_k\to \phi_1$ in $\wt{L}^2(\Omega)$ as $k\to\infty$ (we still denote the subsequence by $\LC u_k\RC_{k\in \N}$). The latter condition guarantees $\left\|\phi_1\right\|_{L^2(\Omega)}=1$. Additionally, the lower semicontinuity of weak convergence ensures that $\|\phi_1\|_{\widetilde{H}^s(\Omega)}^2=\lambda_1$. Thus, $\phi_1\in \widetilde{H}^s(\Omega)$ is a minimizer of the convex functional $u\mapsto \|u\|_{\widetilde{H}^s(\Omega)}^2$, whose Euler--Lagrange equation is
		\[
		\big\langle (-\Delta)^{s/2}\phi_1,(-\Delta)^{s/2}v \big\rangle_{L^2(\R^n)}=\lambda_1\left\langle \phi_1,v \right\rangle_{L^2(\R^n)}
		\]
		for all $v\in \widetilde{H}^s(\Omega)$ (see \cite[Theorem~2.1]{KRZ-2023}). This is nothing else than the assertion that $\lambda_1>0$ is an eigenvalue and $\phi_1\in \widetilde{H}^s(\Omega)$ is a related eigenfunction. That is, $\phi_1$ solves
		\[
		\begin{cases}
			(-\Delta)^su =\lambda_1 u &\text{ in }\Omega,\\
			u=0  &\text{ in }\Omega_e.
		\end{cases}
		\]
		Next, we show that $\lambda_1>0$ is the smallest eigenvalue. For this purpose, assume that $\lambda>0$ is any eigenvalue with normalized eigenfunction $\psi\in \widetilde{H}^s(\Omega)$. Then we have 
		\[
		\|\psi\|_{L^2(\Omega)}=1\text{ and } \|\psi\|_{\widetilde{H}^s(\Omega)}^2=\lambda.
		\]
		By the definition of $\lambda_1$, we get $\lambda\geq \lambda_1$.\\
		
		\textit{Step 2. The $k$-th eigenvalue}. \\
		
		\noindent Let $k\geq 2$. Then we define
		\begin{equation}
			\label{eq: kth eigenvalue}
			\begin{split}
				\lambda_k =&\inf\Big\{\|u\|_{\widetilde{H}^s(\Omega)}^2\,;\,u\in \widetilde{H}^s(\Omega),\,\|u\|_{L^2(\Omega)}=1,\\
				&\qquad\quad\quad\quad\quad\quad\, \left\langle u,\phi_{\ell}\right\rangle_{L^2(\Omega)}=0\text{ for }1\leq \ell\leq k-1\Big\},
			\end{split}
		\end{equation}
		where $\phi_1,\ldots,\phi_{k-1}$ are the normalized eigenfunctions corresponding to the eigenvalues $\lambda_1,\ldots,\lambda_{k-1}$ and for all $1\leq \ell\leq k-1$ we have 
		\begin{equation}
			\label{eq: orthogonality relations}
			\left\langle \phi_{\ell},\phi_m\right\rangle_{L^2(\Omega)}=0\text{ for }1\leq m\leq \ell-1.
		\end{equation}
		Let us assume that that statement holds for $k-1$ and we aim to prove that it holds for $k$. As above, we take a minimizing sequence $\LC u_\ell\RC_{\ell\in\N}$ of \eqref{eq: kth eigenvalue} and, up to subtracting a subsequence, we can assume that 
		\[
		u_\ell\weak \phi_k \quad\text{in}\quad\widetilde{H}^s(\Omega)\quad\text{and}\quad u_\ell\to \phi_k\quad\text{in}\quad \wt{L}^2(\Omega)
		\]  
		as $\ell\to\infty$ for some $\phi_k\in \widetilde{H}^s(\Omega)$. Furthermore, one can easily see that there holds
		\begin{equation}
			\label{eq: orthonorm proper}
			\left\|\phi_k\right\|_{L^2(\Omega)}=1,\,  \left\langle \phi_k,\phi_\ell\right\rangle_{L^2(\Omega)}=0\text{ for }1\leq \ell\leq k-1\text{ and } \lambda_k=\left\|\phi_k\right\|_{\widetilde{H}^s(\Omega)}^2.
		\end{equation}
		Thus, $\phi_k$ is a minimizer with $\left\|\phi_k\right\|_{\widetilde{H}^s(\Omega)}^2=\lambda_k$. Next, let us define
		\[
		\begin{split}
			w_t\vcentcolon = \phi_k+tv-\sum_{\ell=1}^{k-1}\left\langle\phi_k+tv,\phi_{\ell}\right\rangle_{L^2(\Omega)}\phi_\ell=\phi_k+t\bigg(v-\sum_{\ell=1}^{k-1}\left\langle v,\phi_{\ell}\right\rangle_{L^2(\Omega)}\phi_{\ell}\bigg)
		\end{split}
		\]
		for $t\in\R$ and $v\in\widetilde{H}^s(\Omega)\setminus\{0\}$. Thus, we may estimate
		\[
		\begin{split}
			\left\|w_t\right\|_{L^2(\Omega)}&\geq \left\|\phi_k \right\|_{L^2(\Omega)}-|t|\bigg\|v-\sum_{\ell=1}^{k-1}\left\langle v,\phi_{\ell}\right\rangle_{L^2(\Omega)}\phi_{\ell}\bigg\|_{L^2(\Omega)}\\
			&\geq 1-|t|\bigg( \|v\|_{L^2(\Omega)}+\sum_{\ell=1}^{k-1}\big|\left\langle v,\phi_\ell\right\rangle_{L^2(\Omega)}\big|\|\phi_\ell\|_{L^2(\Omega)}\bigg)\\
			&\geq 1-k|t|\|v\|_{L^2(\Omega)}>0
		\end{split}
		\]
		as long as $|t|<\frac{1}{k}\|v\|_{L^2(\Omega)}$, where we used that $\|\phi_\ell\|_{L^2(\Omega)}=1$ for $1\leq \ell\leq k$. Therefore, we can define
		\[
		\widetilde{w}_t=\frac{w_t}{\left\|w_t\right\|_{L^2(\Omega)}}
		\]
		for $|t|<\frac{1}{k}\|v\|_{L^2(\Omega)}$. Using \eqref{eq: orthonorm proper} and setting $\widetilde{v}=v-\sum_{\ell=1}^{k-1}\left\langle v,\phi_{\ell}\right\rangle_{L^2(\Omega)}\phi_{\ell}$,
		we deduce that there holds
		\[
		\left.\frac{d}{dt}\right|_{t=0}\left\|w_t\right\|_{L^2(\Omega)}^2=2\langle\phi_k,\widetilde{v}\rangle_{L^2(\Omega)}=2 \left\langle \phi_k,v \right\rangle_{L^2(\Omega)},
		\]
		and
		\[
		\begin{split}
			0&=\left.\frac{d}{dt}\right|_{t=0}\left\|\widetilde{w}_t\right\|_{\widetilde{H}^s(\Omega)}^2\\
			&=\left.\frac{d}{dt}\right|_{t=0}\frac{\left\|w_t\right\|_{\widetilde{H}^s(\Omega)}^2}{\left\|w_t\right\|_{L^2(\Omega)}^2}\\
			&=\left.\frac{d}{dt}\right|_{t=0} \left\|w_t\right\|_{\widetilde{H}^s(\Omega)}^2- \lambda_k\left.\frac{d}{dt}\right|_{t=0}\left\|w_t\right\|_{L^2(\Omega)}^2\\
			&=2\big(\big\langle (-\Delta)^{s/2}\phi_k,(-\Delta)^{s/2}\widetilde{v}\big\rangle_{L^2(\R^n)}-\lambda_k\left\langle \phi_k,v\right\rangle_{L^2(\Omega)}\big)\\
			&=2\big(\big\langle (-\Delta)^{s/2}\phi_k,(-\Delta)^{s/2}v\big\rangle_{L^2(\R^n)}-\lambda_k\left\langle \phi_k,v\right\rangle_{L^2(\Omega)}\big).
		\end{split}
		\]
		In the last equality, we used that $\phi_\ell$ for $1\leq \ell\leq k-1$ are eigenfunctions of the fractional Laplacian and in $\wt{L}^2(\Omega)$ orthogonal to $\phi_k$ by \eqref{eq: orthonorm proper}. The above computation shows that $\lambda_k$ is an eigenvalue and $\phi_k$ a corresponding eigenfunction. 
		
		Next, we assert that $\lambda_k\to\infty$ as $k\to\infty$. Suppose for the sake of contradiction that $\lambda_k$ is uniformly bounded, so that also $\LC \phi_k\RC_{k\in\N}\subset\widetilde{H}^s(\Omega)$ is uniformly bounded. Thus, up to extracting a subsequence, $\LC\phi_k\RC_{k\in\N}$ converges in $\wt{L}^2(\Omega)$ and in particular is a Cauchy sequence. But then by the above construction, we have $\left\|\phi_k-\phi_m\right\|_{L^2(\Omega)}^2=\left\|\phi_k\right\|_{L^2(\Omega)}^2 +\left\|\phi_m\right\|_{L^2(\Omega)}^2+2\left\langle \phi_k,\phi_m\right\rangle_{L^2(\Omega)}=2$, for $k\neq m$ and thus $\LC\phi_k\RC_{k\in\N}$ cannot be Cauchy, a contradiction. Therefore, we necessarily have $\lambda_k\to\infty$ as $k\to\infty$.\\

		\textit{Step 3. Proof of \ref{spectral prop 1}}.\\  
		
		\noindent We already know that $(\phi_k)_{k\in\N}$ is an orthonormal system in $\wt{L}^2(\Omega)$. So, we only need to establish that the linear span of $(\phi_k)_{k\in\N}$ is dense in $\wt{L}^2(\Omega)$. As $\widetilde{H}^s(\Omega)$ is dense in $\wt{L}^2(\Omega)$ and $\LC \phi_k\RC_{k\in\N}\subset\widetilde{H}^s(\Omega)$, it is enough to show that every function in $\widetilde{H}^s(\Omega)$ can be approximated by elements in the linear span of $(\phi_k)_{k\in\N}$. So, let $v\in\widetilde{H}^s(\Omega)$ be any fixed function and define
		\begin{equation}
			\label{eq: def of vk}
			v_k=v-\sum_{\ell=1}^{k-1}\left\langle v,\phi_{\ell}\right\rangle_{L^2(\Omega)}\phi_\ell=v-\sum_{\ell=1}^{k-1}\lambda_\ell^{-1}\left\langle v,\phi_{\ell}\right\rangle_{\widetilde{H}^s(\Omega)}\phi_\ell
		\end{equation}
		for any $k\geq 2$. By orthonormality of $(\phi_k)_{k\in\N}$ in $\wt{L}^2(\Omega)$, we get $
		\left\langle v_k,\phi_\ell\right\rangle_{L^2(\Omega)}=0\text{ for any }1\leq \ell\leq k-1$.
		By formula  \eqref{eq: kth eigenvalue} this yields
		\begin{equation}
			\label{eq: Hs of vk}
			\left\|v_k\right\|_{\widetilde{H}^s(\Omega)}^2\geq \lambda_k \left\|v_k\right\|_{L^2(\Omega)}^2.
		\end{equation}
		Now, using the orthonormality of $\LC \phi_\ell \RC_{1\leq \ell \leq k-1}$, we may compute
		\begin{equation}
			\label{eq: estimate vk}
			\begin{split}
				\|v\|_{\widetilde{H}^s(\Omega)}^2&=\bigg\|v_k+\sum_{\ell=1}^{k-1}\lambda_{\ell}^{-1}\langle v,\phi_\ell\rangle_{\widetilde{H}^s(\Omega)}\phi_\ell\bigg\|^2_{\widetilde{H}^s(\Omega)}\\
				&=\left\|v_k\right\|_{\widetilde{H}^s(\Omega)}^2+2\sum_{\ell=1}^{k-1}\lambda_{\ell}^{-1}\left\langle v,\phi_\ell\right\rangle_{\widetilde{H}^s(\Omega)}\langle v_k,\phi_\ell\rangle_{\widetilde{H}^s(\Omega)}\\
				&\quad \, +\sum_{\ell=1}^{k-1}\sum_{\ell'=1}^{k-1}\lambda_{\ell}^{-1}\lambda_{\ell'}^{-1}\left\langle v,\phi_\ell\right\rangle_{\widetilde{H}^s(\Omega)}\left\langle v,\phi_{\ell'}\right\rangle_{\widetilde{H}^s(\Omega)}\left\langle \phi_{\ell},\phi_{\ell'}\right\rangle_{\widetilde{H}^s(\Omega)}\\
				&=\left\|v_k\right\|_{\widetilde{H}^s(\Omega)}^2+\sum_{\ell=1}^{k-1}\lambda_{\ell}^{-1}\big|\left\langle v,\phi_{\ell}\right\rangle_{\widetilde{H}^s(\Omega)}\big|^2\\
				&\geq \left\|v_k\right\|_{\widetilde{H}^s(\Omega)}^2.
			\end{split}
		\end{equation}
		Thus, by \eqref{eq: Hs of vk} we obtain
		\[
		\left\|v_k\right\|_{L^2(\Omega)}^2 \leq \lambda_k^{-1}\left\|v_k\right\|_{\widetilde{H}^s(\Omega)}^2\leq \lambda_k^{-1}\|v\|_{\widetilde{H}^s(\Omega)}^2.
		\]
		Passing to the limit, this implies $v_k\to 0$ in $\wt{L}^2(\Omega)$. Hence, we have
		\begin{equation}
			\label{eq: L2 limit}
			v=\sum_{\ell=1}^{\infty}\left\langle v,\phi_{\ell}\right\rangle_{L^2(\Omega)}\phi_\ell=\sum_{\ell=1}^{\infty}\big\langle v,\lambda_{\ell}^{-1/2}\phi_{\ell}\big\rangle_{\widetilde{H}^s(\Omega)}\big(\lambda_{\ell}^{-1/2} \phi_\ell \big)
		\end{equation}
		in $\wt{L}^2(\Omega)$.\\
		
		\textit{Step 4. Proof of \ref{spectral prop 2}}.\\
		
		\noindent First note that $\big( \lambda_k^{-1/2}\phi_k\big)_{k\in\N}\subset \widetilde{H}^s(\Omega)$ is orthonormal. This is a direct consequence of the above construction. It remains to show the density of the linear span of $\big( \lambda_k^{-1/2}\phi_k\big)_{k\in\N}$ in $\widetilde{H}^s(\Omega)$. Let us fix $v\in\widetilde{H}^s(\Omega)$ and suppose that the sequence $\left(v_k\right)_{k\geq 2}$ is defined as in  \eqref{eq: def of vk}. From the estimate \eqref{eq: estimate vk} we know that $\left(v_k\right)_{k\geq 2}$ is uniformly bounded in $\widetilde{H}^s(\Omega)$ and thus up to extracting a subsequence, we get $v_k\weak w$ in $\widetilde{H}^s(\Omega)$ for some $w\in\widetilde{H}^s(\Omega)$. The compact embedding $\widetilde{H}^s(\Omega)\hookrightarrow \wt{L}^2(\Omega)$ now gives $w=0$ as we already know from the previous step that $v_k\to 0$ in $\wt{L}^2(\Omega)$ as $k\to\infty$. As for any subsequence, there is a further subsequence with this property, we know that the whole sequence weakly converges in $\widetilde{H}^s(\Omega)$ to this limit $w=0$. By Mazur's lemma, there exists a sequence of convex linear combinations 
		\[
		w_\ell=\sum_{k=2}^{\ell}a_{k}^{(\ell)}v_k,\quad 0\leq a_k^{(\ell)}\leq 1,\quad \sum_{k=2}^{\ell}a_{k}^{(\ell)}=1
		\]
		such that $w_\ell\to 0$ in $\widetilde{H}^s(\Omega)$. Note that by \eqref{eq: def of vk} we have
		\[
		w_\ell=v-\sum_{k=2}^{\ell}\sum_{m=1}^{k-1}a_k^{(\ell)}\big\langle v,\lambda_m^{-1/2}\phi_m \big\rangle_{\widetilde{H}^s(\Omega)}\big(\lambda_m^{-1/2}\phi_m\big)
		\]
		and thus
		\[
		W_\ell=\sum_{k=2}^{\ell}\sum_{m=1}^{k-1}a_k^{(\ell)}\big\langle v,\lambda_m^{-1/2}\phi_m \big\rangle_{\widetilde{H}^s(\Omega)}\big(\lambda_m^{-1/2}\phi_m\big)\to v
		\]
		in $\widetilde{H}^s(\Omega)$ as $\ell\to\infty$. As the functions $W_\ell$ clearly belong to the span of $\big(\lambda_k^{-1/2}\phi_k\big)_{k\in\N}$, we may conclude the proof.\\
		
		\textit{Step 5. Proof of \ref{spectral prop 3}}.\\  
		
		\noindent Note that for any $G\in H^{-s}(\Omega)$ and $v\in \widetilde{H}^s(\Omega)$ we have by \ref{spectral prop 2} the identity
		\[
		\langle G,v\rangle=\sum_{k=1}^{\infty}\big\langle v,\lambda_k^{-1/2}\phi_k \big\rangle_{\widetilde{H}^s(\Omega)} \big\langle G,\lambda_k^{-1/2}\phi_k\big\rangle.
		\]
		Using the Cauchy-Schwarz inequality, we get
		\[
		\begin{split}
			|\langle G,v\rangle|&\leq \sum_{k=1}^{\infty}\big|\big\langle v,\lambda_k^{-1/2}\phi_k\big\rangle_{\widetilde{H}^s(\Omega)}\big|\lambda_k^{-1/2}\big|G_k\big|\\
			&\leq \bigg(\sum_{k=1}^{\infty}\big|\big\langle v,\lambda_k^{-1/2}\phi_k\big\rangle_{\widetilde{H}^s(\Omega)} \big|^2\bigg)^{1/2}\bigg(\sum_{k=1}^{\infty}\lambda_k^{-1}\left|G_k\right|^2\bigg)^{1/2}\\
			&= \|v\|_{\widetilde{H}^s(\Omega)}\bigg(\sum_{k=1}^{\infty}\lambda_k^{-1}\left|G_k\right|^2\bigg)^{1/2},
		\end{split}
		\]
		where we have again put $G_k=\langle G,\phi_k\rangle$ and used \cite[Corollary~5.10]{Brezis}. Hence, 
		\begin{equation}
			\label{eq: bound above}
			\|G\|_{H^{-s}(\Omega)}\leq \bigg(\sum_{k=1}^{\infty}\lambda_k^{-1}|G_k|^2\bigg)^{1/2}.
		\end{equation}
		Next, let $v\in \widetilde{H}^s(\Omega)$ be the unique solution to
		\begin{equation}
			\label{eq: solution elliptic problem}
			\begin{cases}
				(-\Delta)^s v=G &\text{ in }\Omega,\\
				v=0  &\text{ in }\Omega_e,
			\end{cases}
		\end{equation}
		which exists by the Lax-Milgram theorem, and satisfies
		\begin{equation}
			\label{eq: lax milgram argument}
			\|v\|_{\widetilde{H}^s(\Omega)}\leq \|G\|_{H^{-s}(\Omega)}.
		\end{equation}
		By Plancherel's theorem, the left-hand side can be written as
		\begin{equation}
			\label{eq: Plancherel}\|v\|_{\widetilde{H}^s(\Omega)}^2=\sum_{k=1}^{\infty}\big|\big\langle v,\lambda_k^{-1/2}\phi_k\big\rangle_{\widetilde{H}^s(\Omega)}\big|^2.
		\end{equation}
		As $v$ solves \eqref{eq: solution elliptic problem}, we get
		\begin{equation}
			\label{eq: coincidence of coeff}
			\begin{split}
				\big\langle v,\lambda_k^{-1/2}\phi_k\big\rangle_{\widetilde{H}^s(\Omega)}&=\lambda_k^{-1/2}\big\langle (-\Delta)^{s/2}v,(-\Delta)^{s/2}\phi_k\big\rangle_{L^2(\R^n)}\\
				&=\lambda_k^{-1/2}\left\langle G,\phi_k\right\rangle\\
				&=\lambda_k^{-1/2}G_k. 
			\end{split}
		\end{equation}
		Taking into account \eqref{eq: lax milgram argument} and \eqref{eq: Plancherel}, we get
		\begin{equation}
			\label{eq: bound below}
			\begin{split}
				\bigg(\sum_{k=1}^{\infty}\lambda_k^{-1}\left|G_k\right|^2\bigg)^{1/2}&=
				\bigg(\sum_{k=1}^{\infty}\big|\big\langle v,\lambda_k^{-1/2}\phi_k\big\rangle_{\widetilde{H}^s(\Omega)}\big|^2\bigg)^{1/2}\\
				&=\|v\|_{\widetilde{H}^s(\Omega)}\\
				&\leq \|G\|_{H^{-s}(\Omega)}.
			\end{split}
		\end{equation}
		Thus, we may conclude that for any $G\in H^{-s}(\Omega)$, we have
		\begin{equation}
			\label{eq: characterization of dual norm}
			\|G\|_{H^{-s}(\Omega)}=\bigg(\sum_{k=1}^{\infty}\lambda_k^{-1}\left|G_k\right|^2\bigg)^{1/2}.
		\end{equation}
		Next, we assert that for any $G\in H^{-s}(\Omega)$ we have
		\begin{equation}
			\label{eq: convergence in dual space}
			G=\sum_{k=1}^{\infty}G_k\phi_k\quad\text{in}\quad H^{-s}(\Omega),
		\end{equation}
		where $G_k=\left\langle G,\phi_k \right\rangle$, for $k\in\N$. Again, let $v\in\widetilde{H}^s(\Omega)$ be the unique solution of \eqref{eq: solution elliptic problem}. Then from \eqref{eq: bound above} and \eqref{eq: bound below}, we know that
		\begin{equation}
			\label{eq: isomorphism identity}
			\|v\|_{\widetilde{H}^s(\Omega)}=\|G\|_{H^{-s}(\Omega)}.
		\end{equation}
		Therefore the source-to-solution map $S\colon H^{-s}(\Omega)\to \widetilde{H}^s(\Omega)$ related to \eqref{eq: solution elliptic problem} is an isometric isomorphism. In fact, surjectivity follows by using $G=(-\Delta)^s v\in H^{-s}(\R^n)\hookrightarrow H^{-s}(\Omega)$ for given $v\in\widetilde{H}^s(\Omega)$ as a source. By \ref{spectral prop 2}, we already know that
		\[
		v=\sum_{k=1}^{\infty}\big\langle v,\lambda_k^{-1/2}\phi_k\big\rangle_{\widetilde{H}^s(\Omega)}\lambda_k^{-1/2}\phi_k
		\]
		in $\widetilde{H}^s(\Omega)$ for any $v\in\widetilde{H}^s(\Omega)$. As $SG=v$ and $S^{-1}$ is a bounded linear map by the Banach isomorphism theorem, we deduce that
		\[
		G=S^{-1}v=\sum_{k=1}^{\infty}\lambda_k^{-1/2}\big\langle v,\lambda_k^{-1/2}\phi_k\big\rangle_{\widetilde{H}^s(\Omega)}S^{-1}\phi_k\quad\text{in}\quad H^{-s}(\Omega).
		\]
		As $S^{-1}\phi_k=\lambda_k\phi_k$, we get by \eqref{eq: coincidence of coeff} the identity
		\begin{equation}
			\label{eq: identity dual function}
			\begin{split}
				G&=\sum_{k=1}^{\infty}\lambda_k^{1/2}\big\langle v,\lambda_k^{-1/2}\phi_k\big\rangle_{\widetilde{H}^s(\Omega)}\phi_k\\
				&=\sum_{k=1}^{\infty}G_k\phi_k\quad\text{in}\quad H^{-s}(\Omega).
			\end{split}
		\end{equation}
		This verifies the assertion \eqref{eq: convergence in dual space}. Observe that the bilinear form
		\begin{equation}
			\label{eq: inner product on dual space}
			\langle G,H\rangle_{H^{-s}(\Omega)}\vcentcolon =\langle SG,SH\rangle_{\widetilde{H}^s(\Omega)}
		\end{equation}
		for $G,H\in H^{-s}(\Omega)$ defines an inner product on $H^{-s}(\Omega)$ and the induced norm coincides with the dual norm $\|\cdot\|_{H^{-s}(\Omega)}$ (see \eqref{eq: isomorphism identity}). Note that 
		\[
		\begin{split}
			\left\langle \phi_k,\phi_\ell\right\rangle_{H^{-s}(\Omega)}&=\left\langle S\phi_k,S\phi_{\ell}\right\rangle_{\widetilde{H}^s(\Omega)}=\lambda_k^{-1}\lambda_{\ell}^{-1}\left\langle \phi_k,\phi_{\ell}\right\rangle_{\widetilde{H}^s(\Omega)}\\
			&=\lambda_{\ell}^{-1}\left\langle \phi_k,\phi_{\ell}\right\rangle_{L^2(\Omega)}=\lambda_k^{-1}\delta_{k,\ell}
		\end{split}
		\]
		for any $k,\ell\in\N$. Hence, $\big( \lambda_k^{1/2}\phi_k\big)_{k\in\N}$ is orthonormal in $H^{-s}(\Omega)$. By the definition of the isomorphism $S$, $S\phi_k=\lambda_k^{-1}\phi_k$ and \eqref{eq: inner product on dual space}, we get
		\begin{equation}
			\label{eq: expression for Gk}
			\begin{split}
				G_k&=\langle G,\phi_k\rangle\\
				&=\left\langle SG,\phi_k\right\rangle_{\widetilde{H}^s(\Omega)}\\
				&=\left\langle SG,S (\lambda_k\phi_k)\right\rangle_{\widetilde{H}^s(\Omega)}\\
				&=\lambda_k\left\langle G,\phi_k\right\rangle_{H^{-s}(\Omega)}.
			\end{split}
		\end{equation}
		Finally, by \eqref{eq: identity dual function} this implies
		\begin{equation}
			G=\sum_{k=1}^{\infty} \big\langle G,\lambda_k^{1/2}\phi_k\big\rangle_{H^{-s}(\Omega)}\lambda_k^{1/2}\phi_k\quad \text{in}\quad H^{-s}(\Omega),
		\end{equation}
		which in turn implies that $\big(\lambda_k^{1/2}\phi_k\big)_{k\in\N}$ is an orthonormal basis in $H^{-s}(\Omega)$.
	\end{proof}
	
	\medskip 
	
	\noindent\textbf{Acknowledgments.} 
	\begin{itemize}
		\item Y.-H.~Lin was partially supported by the National Science and Technology Council (NSTC), Taiwan, under project 113-2628-M-A49-003. Y.-H. Lin is also a Humboldt research fellow. 
		\item T.~Tyni was supported by the Research Council of Finland (Flagship of Advanced Mathematics for Sensing, Imaging and Modelling grant 359186) and by the Emil Aaltonen Foundation. 
		\item P.~Zimmermann was supported by the Swiss National Science Foundation (SNSF), under grant number 214500.
	\end{itemize}

    \section*{Statements and Declarations}
	
	\subsection*{Data availability statement}
	No datasets were generated or analyzed during the current study.
	
	\subsection*{Conflict of Interests} Hereby, we declare that there is no conflict of interest.

	\bibliography{refs} 
	
	\bibliographystyle{alpha}
	
\end{document}